\title[Large-time behavior of composite waves of viscous shocks for the NS equations]{Large-time behavior of composite waves of viscous shocks for the barotropic Navier-Stokes equations}
\author[Han]{Sungho Han}
\address[Sungho Han]{\newline Department of Mathematical Sciences \newline Korea Advanced Institute of Science and Technology, Daejeon  34141, Republic of Korea}
\email{sungho\_han@kaist.ac.kr}
\author[Kang]{Moon-Jin Kang}
\address[Moon-Jin Kang]{\newline Department of Mathematical Sciences \newline Korea Advanced Institute of Science and Technology, Daejeon  34141, Republic of Korea}
\email{moonjinkang@kaist.ac.kr}
\author[Kim]{Jeongho Kim}
\address[Jeongho Kim]{\newline School of Mathematics, \newline Korea Institute for Advanced Study, 85 Hoegiro, Seoul 02455, Republic of Korea}
\email{jeonghokim206@gmail.com}
\newtheorem{theorem}{Theorem}[section]
\newtheorem{lemma}{Lemma}[section]
\newtheorem{proposition}{Proposition}[section]
\newtheorem{remark}{Remark}[section]
\newcommand{\beq}{\begin{equation}}
\newcommand{\eeq}{\end{equation}}
\newcommand{\bbr}{\mathbb R}
\newcommand{\eps}{\varepsilon }
\newcommand{\R}{\mathbb{R}}
\newcommand{\bq}{\begin{equation}}
\newcommand{\eq}{\end{equation}}
\newcommand{\e}{\varepsilon}
\newcommand{\pa}{\partial}
\newcommand{\cB}{\mathcal{B}}
\newcommand{\cG}{\mathcal{G}}
\newcommand{\cS}{\mathcal{S}}
\newcommand{\cD}{\mathcal{D}}
\newcommand{\tU}{\widetilde{U}}
\newcommand{\tu}{\widetilde{u}}
\newcommand{\tv}{\widetilde{v}}
\newcommand{\tvi}{\widetilde{v}_i}
\newcommand{\pv}{p(v)}
\newcommand{\tp}{\widetilde{p}}
\newcommand{\tpv}{p(\widetilde{v})}
\newcommand{\thi}{\widetilde{h}_i}
\newcommand{\ai}{a_i}
\newcommand{\lpi}{\phi_i}
\newcommand{\norm}[1]{\left\lVert#1\right\rVert}
\newcommand{\di}{\displaystyle}
\begin{document}
%%%%%%%%%%%%%%%%
\bibliographystyle{plain}

\date{\today}

%\subjclass{2010 MSC: 70F99, 92B25} 

\keywords{barotropic Navier-Stokes equations; composition of two viscous shock waves; long-time stability; $a$-contraction with shifts}

\thanks{\textbf{Acknowledgment.} S. Han is supported by the NRF-2019R1C1C1009355.
M.-J. Kang is partially supported by the NRF-2019R1C1C1009355 and the POSCO Science Fellowship of POSCO TJ Park Foundation. 
J. Kim is supported by a KIAS Individual Grant (SP087401) via the Center for Mathematical Challenges at Korea Institute for Advanced Study.}

\begin{abstract} 
	We study the large-time behavior of the 1D barotropic Navier-Stokes flow perturbed from Riemann data generating a composition of two shock waves with small amplitudes. We prove that the perturbed Navier-Stokes flow converges, uniformly in space, towards a composition of two viscous shock waves  as time goes to infinity, up to dynamical shifts. Especially, the strengths of the two waves can be chosen independently. This is the first result for the convergence to a composite wave of two viscous shocks with independently small amplitudes.   
\end{abstract}

\maketitle

\tableofcontents

\section{Introduction}\label{sec:1}
\setcounter{equation}{0}

We consider the one-dimensional barotropic compressible Navier-Stokes equations in the Lagrangian mass coordinates:
\begin{align}
\begin{aligned}\label{eq:NS}
&v_t - u_x = 0,\\
&u_t +p(v)_x = \left(\mu\frac{u_x}{v}\right)_x, \quad x\in\bbr,\quad t>0
\end{aligned}
\end{align}
where $v=v(t,x)>0$ and $u=u(t,x)$ denote the specific volume and the velocity of the fluid, respectively. The pressure $p=p(v)$ is given by the $\gamma$-law $p(v) = bv^{-\gamma}$, with $b>0$ and $\gamma>1$ and $\mu>0$ denotes the viscosity coefficient of the fluid. For simplicity, we normalize the coefficients as $b=1$ and $\mu=1$. \\
Consider initial data of the system \eqref{eq:NS} given by $(v_0,u_0)$, which connects prescribed far-field constant states:
\begin{equation}\label{initial}
\lim_{x\to\pm\infty}(v_0(x),u_0(x)) = (v_{\pm},u_{\pm}).
\end{equation}

A heuristic argument (see e.g. \cite{MatsumuraBook}) describes that large-time behavior of solutions $(v,u)$ to the Navier-Stokes equations \eqref{eq:NS} has a close relationship with the Riemann problem of the associated Euler equations:
\begin{align}
\begin{aligned}\label{eq:Euler}
&v_t - u_x = 0,\\
&u_t + p(v)_x = 0,\quad x\in\bbr,\quad t>0,
\end{aligned}
\end{align}
subject to the Riemann initial data
\begin{equation}\label{Riemann_data}
(v(t,x),u(t,x))|_{t=0} = \begin{cases}
(v_-,u_-),\quad x<0,\\
(v_+,u_+),\quad x>0.
\end{cases}
\end{equation}
We consider the end states $(v_\pm,u_\pm)$ such that there exists a unique intermediate state $(v_m,u_m)$ which is connected with $(v_-,u_-)$ by 1-shock curve and with $(v_+,u_+)$ by 2-shock curve. That is, there exists a unique $(v_m,u_m)$ such that the following Rankine-Hugoniot condition and Lax entropy condition hold:
\begin{align}
\begin{aligned}\label{RH-condition}
&\begin{cases}
-\sigma_1(v_m-v_-)-(u_m-u_-) = 0,\\
-\sigma_1(u_m-u_-)+(p(v_m)-p(v_-))=0,
\end{cases}\quad \sigma_1 := -\sqrt{-\frac{p(v_m)-p(v_-)}{v_m-v_-}},\quad v_->v_m,\quad u_->u_m;\\
&\begin{cases}
-\sigma_2(v_+-v_m)-(u_+-u_m) = 0,\\
-\sigma_2(u_+-u_m)+(p(v_+)-p(v_m))=0,
\end{cases}\quad \sigma_2 := \sqrt{-\frac{p(v_+)-p(v_m)}{v_+-v_m}},\quad v_m<v_+,\quad u_m>u_+.
\end{aligned}
\end{align}
Then, the Euler equations \eqref{eq:Euler} with \eqref{Riemann_data}-\eqref{RH-condition} admit a unique self-similar solution, the so-called Riemann solution $(\bar{v},\bar{u})$, represented by the composition $(\bar{v},\bar{u})=(v^s_1,u^s_1)+(v^s_2,u^s_2)-(v_m,u_m)$ of 1-shock wave $(v^s_1,u^s_1)$ and 2-shock wave $(v^s_2,u^s_2)$ defined as (see e.g. \cite{Serre-book})
\[(v^s_1,u^s_1)(t,x) = \begin{cases}
(v_-,u_-),\quad x<\sigma_1t,\\
(v_m,u_m),\quad x>\sigma_1t,
\end{cases},\quad (v^s_2,u^s_2)(t,x) = \begin{cases}
(v_m,u_m),\quad x<\sigma_2t,\\
(v_+,u_+),\quad x>\sigma_2t.
\end{cases}\]
The viscous counterpart of the Riemann solution $(\bar{v},\bar{u})$ is given by the composite wave:
\beq\label{com2}
(\tv(t,x),\tu(t,x)):=\Big(\tv_1(x-\sigma_1 t),\tu_1(x-\sigma_1t)\Big)+\Big(\tv_2(x-\sigma_2 t),\tu_2(x-\sigma_2 t)\Big)-(v_m,u_m),
\eeq
which is composed of 1-viscous shock $(\tv_1,\tu_1)(x-\sigma_1t)$ and 2-viscous shock $(\tv_2,\tu_2)(x-\sigma_2t)$ satisfying: for each $i=1,2,$
\begin{equation}\label{viscous-shock-u}
\begin{cases}
-\sigma_i(\tv_i)' - (\tu_i)' = 0,\\
-\sigma_i(\tu_i)' + p(\tv_i)' = \left(\frac{(\tu_i)'}{\tv_i}\right)',\\
(\tv_1,\tu_1)(-\infty) = (v_-,u_-),\quad (\tv_1,\tu_1)(+\infty) = (v_m,u_m),\\
(\tv_2,\tu_2)(-\infty) = (v_m,u_m),\quad (\tv_2,\tu_2)(+\infty) = (v_+,u_+).
\end{cases}
\end{equation} 
Notice that each viscous shock $(\tv_i(x-\sigma_i t),\tu_i(x-\sigma_it))$ is a traveling wave solution to \eqref{eq:NS}.\\

In this paper, we aims to prove that solutions to the Navier-Stokes system \eqref{eq:NS}-\eqref{initial} with \eqref{RH-condition} converge to the composite wave $(\bar{v},\bar{u})$ up to shifts, uniformly in $x$ as $t\to\infty$. \\

As previous results on time-asymptotic stability for \eqref{eq:NS}-\eqref{initial} when the end states are connected by a single shock, Matsumura and Nishihara \cite{MN-S} first proved the convergence of solutions toward a single viscous shock with small amplitude, uniformly in $x$, using the anti-derivative method under the zero-mass condition. Later on, this zero-mass condition is removed by introducing a constant shift with diffusion wave by Liu \cite{Liu}, Liu-Zeng \cite{LZ} and Szepessy-Xin \cite{SX}. On the other hand, Masica and Zumbrun \cite{M-Zumbrun} showed the spectral stability of viscous shock wave under the weaker condition compared to the zero-mass condition, called a spectral condition. %Apart from the Navier-Stokes system, Goodman proved the stability of the viscous shock waves for a general system with artifical diffusion \cite{G} using the anti-derivative method with zero-mass condition.
Recently, Wang-Wang \cite{WW} studied on a planar shock wave for the three-dimensional barotropic Navier-Stokes equations,  by utilizing a new method called ``$a$-contraction with shifts".

For the composition of two shocks as in our setting, Huang-Matsumura \cite{HM} showed the convergence toward a composite wave composed of two viscous shocks for the Navier-Stokes-Fourier system when the strengths of two viscous shocks are small with the same order. As mentioned in \cite{MatsumuraBook}, the same result also could be obtained for the barotropic case \eqref{eq:NS}, using the parallel argument as in \cite{HM,MN-S}. \\
In this paper, however, we do not assume the same order smallness, that is, our result provides the uniform convergence toward a composition of two viscous shocks with independently small amplitudes. 

On the other hand, when the initial data \eqref{Riemann_data} generates rarefaction waves, the time-asymptotic stability of the rarefaction wave has been proven by Matsumura-Nishihara \cite{MN86,MN}. Similar stability results are also shown for the Navier-Stokes-Fourier system in \cite{LX,NYZ}.

However, all the mentioned literature treated either shocks or rarefaction waves, but not the composition of them. Indeed, the time-asymptotic stability of the composition of shock and rarefaction waves is a challenging problem \cite{MN-S}. This open problem is recently solved by the second author, Vasseur and Wang \cite{KVW3}, using the method of $a$-contraction with shift for the shock wave, combining with the energy method for the rarefaction wave.\\ 

We will apply the method of $a$-contraction with shifts to prove the time-asymptotic behavior as a composition of two viscous shock waves in the following theorem.

\begin{theorem}\label{thm:main}
	For a given constant state $(v_+,u_+)\in\R_+\times\R$, there exist positive constants $\delta_0,\e_0$ such that the following holds.\\	
	For any constant states $(v_m,u_m)$ and $(v_-,u_-)$ satisfying \eqref{RH-condition} with
	\beq\label{assind}
	|v_+-v_m|+|v_m-v_-|<\delta_0,
	\eeq
	let $(\tv_i,\tu_i)(x-\sigma_it)$ be the $i$-viscous shock wave satisfying \eqref{viscous-shock-u}. In addition, let $(v_0,u_0)$ be any initial data satisfying
	\[\sum_{\pm} \left(\|v_0-v_\pm\|_{L^2(\R_\pm)}+\|u_0-u_\pm\|_{L^2(\R_\pm)}\right)+ \|\pa_xv_0\|_{L^2(\R)}+\|\pa_xu_0\|_{L^2(\R)}<\e_0,\]
	where $\R_+:=(0,+\infty)$ and $\R_-:=(-\infty,0)$. Then, the compressible Navier-Stokes system \eqref{eq:NS}-\eqref{initial} with \eqref{RH-condition} admits a unique global-in-time solution $(v,u)$ in the following sense: there exist absolutely continuous shift functions $X_1(t), X_2(t)$ such that
	\begin{align*}
	&v(t,x)-(\tv_1(x-\sigma_1t-X_1(t))+\tv_2(x-\sigma_2t-X_2(t))-v_m)\in C(0,+\infty;H^1(\R)),\\
	&u(t,x)-(\tu_1(x-\sigma_1t-X_2(t))+\tu_2(x-\sigma_2t-X_2(t))-u_m)\in C(0,+\infty;H^1(\R)).
	\end{align*}
	Moreover, we have the large-time behavior:
	\begin{align*}
	&\lim_{t\to+\infty}\sup_{x\in \R}\Big|v(t,x)-\big(\tv_1(x-\sigma_1t-X_1(t))+\tv_2(x-\sigma_2t-X_2(t))-v_m\big)\Big|=0,\\
	&\lim_{t\to+\infty}\sup_{x\in \R}\Big|u(t,x)-\big(\tu_1(x-\sigma_1t-X_1(t))+\tu_2(x-\sigma_2t-X_2(t))-u_m\big)\Big|=0,
	\end{align*}
	where 
	\beq\label{as}
	\lim_{t\to+\infty}|\dot{X}_i(t)|=0,\quad\mbox{for}\quad i=1,2.
	\eeq
	Especially, the shifts are well-separated in the following sense: 
	\beq\label{sx12}
	X_1(t)+\sigma_1t \le \frac{\sigma_1}{2}t<0<\frac{\sigma_2}{2}t \le X_2(t) +\sigma_2t ,\quad t>0.
	\eeq	
\end{theorem}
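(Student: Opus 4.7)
The plan is to apply the method of $a$-contraction with shifts to the two viscous shocks simultaneously, close a global $H^1$ a priori estimate on the perturbation, and deduce uniform-in-$x$ convergence from Gagliardo-Nirenberg. First, set the ansatz
\[
\tv(t,x) = \tv_1(x-\sigma_1 t-X_1(t)) + \tv_2(x-\sigma_2 t - X_2(t)) - v_m
\]
(and similarly for $\tu$), with shifts $X_1, X_2$ still to be determined. The perturbation $(\phi,\psi):=(v-\tv, u-\tu)$ then satisfies a quasilinear parabolic system whose forcing is the composite-wave interaction error (since $(\tv,\tu)$ is not an exact solution of \eqref{eq:NS}); local-in-time existence in $H^1$ follows from standard parabolic theory.

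For the a priori estimate I would introduce monotone weights $a_i(y) = 1 + (\lambda_i/\delta_i)\bigl(\tv_i(y)-v_m\bigr)$ with $\lambda_i$ of order $\delta_i$, and consider the weighted relative entropy functional
\[
\mathcal{E}(t) = \int_{\R}\Bigl(a_1(x-\sigma_1 t - X_1(t)) + a_2(x-\sigma_2 t - X_2(t))\Bigr)\,\eta\bigl(v,u \,\big|\, \tv,\tu\bigr)(t,x)\,dx,
\]
where $\eta$ is the relative mechanical energy. The shifts are then prescribed by ODEs of the form
\[
\dot X_i(t) = -\frac{M_i}{\delta_i}\int_{\R} (\tv_i)'(x-\sigma_i t - X_i(t))\,[\,\cdots\,](t,x)\,dx,
\]
with a carefully chosen linear integrand and constants $M_i$ tuned so that the most dangerous linear contributions in $d\mathcal{E}/dt$ cancel. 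After integration by parts and Taylor expansion of $p(v)$ around each $\tv_i$, the outcome takes the form
\[
\frac{d\mathcal{E}}{dt} + c\bigl(\cD_1 + \cD_2 + \cG\bigr) \le \mathcal{I}_{\mathrm{int}} + \mathcal{R},
\]
where $\cD_i$ are the weighted shock dissipations localized on the $i$-th profile, $\cG$ is the parabolic dissipation of $\pa_x(u-\tu)$, and $\mathcal{R}$ collects higher-order remainders.

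The interaction term $\mathcal{I}_{\mathrm{int}}$ is handled via the crucial fact that $\sigma_1 < 0 < \sigma_2$: the cores of the two viscous shocks drift apart at a fixed speed bounded below by $|\sigma_2-\sigma_1|$, while each profile decays exponentially off its core at a rate proportional to its amplitude $\delta_i$. This yields a bound of the form $|\mathcal{I}_{\mathrm{int}}(t)| \le C\delta_1\delta_2\, e^{-c(\delta_1+\delta_2)t}$, which is time-integrable \emph{without} any same-order restriction on $\delta_1$ and $\delta_2$. Combining this with higher-order energy estimates (differentiating \eqref{eq:NS} and exploiting the parabolic smoothing, possibly after passing to an effective velocity $h = u - (\ln v)_x$ to symmetrize the dissipation) gives the global closure
\[
\|(\phi,\psi)(t)\|_{H^1}^2 + \int_0^t \|\pa_x(\phi,\psi)(s)\|_{L^2}^2\,ds \le C(\e_0^2 + \delta_0),
\]
from which a standard continuity argument extends the local solution globally. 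Integrability of $\|\pa_x(\phi,\psi)\|_{L^2}^2$ together with a uniform bound on its time derivative forces $\|\pa_x(\phi,\psi)(t)\|_{L^2}\to 0$, and the inequality $\|f\|_{L^\infty}\le \sqrt{2}\,\|f\|_{L^2}^{1/2}\|\pa_x f\|_{L^2}^{1/2}$ delivers the uniform-in-$x$ convergence. Finally, the shift ODEs give $|\dot X_i(t)|\le C\delta_0$, so $|X_i(t)|\le C\delta_0 t \le |\sigma_i|t/2$ once $\delta_0$ is taken small enough, which yields \eqref{sx12}; the vanishing \eqref{as} follows from the integrability of the dissipation bounding the integrand of the shift ODE.

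The main obstacle is the control of $\mathcal{I}_{\mathrm{int}}$ and the design of the weights $a_i$ uniformly with respect to the \emph{independent} amplitudes $\delta_1,\delta_2$. In earlier works such as \cite{HM,MN-S}, the same-order assumption $\delta_1\sim\delta_2$ was used to balance the cross terms directly against the dissipations. Here, the accounting for each shock must be separated: each weight $a_i$ scales only with its own amplitude, each shift ODE involves only $(\tv_i)'$, and cross contributions must be dismissed either by the exponential spatial separation noted above or by absorbing them into the disjoint weighted dissipations $\cD_1,\cD_2$. Avoiding any factor $\delta_1/\delta_2$ or $\delta_2/\delta_1$ in this bookkeeping is the most delicate ingredient.
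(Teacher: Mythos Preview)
Your broad strategy matches the paper's---$a$-contraction with two weights and two shifts, interaction control via the spatial separation $\sigma_1<0<\sigma_2$, and a Gagliardo--Nirenberg endgame---but there are two concrete gaps that would prevent closure as written.

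First, your weight scaling $\lambda_i\sim\delta_i$ is too weak. With that choice the weighted ``hyperbolic'' good term $\sigma_i\int(a_i)'_x\,Q(v|\tv)$ is the \emph{same order} as the main bad term $\sigma_i\int a(\tv_i)'_x\,p(v|\tv)$, and a naive energy inequality leaves an $O(1)$ leftover that cannot be absorbed. The paper instead takes a \emph{single} $\lambda$ with $\delta_i\ll\lambda$ (in fact $\delta_i\ll\lambda\le C\sqrt{\delta_i}$), and then closes the main inequality not by cancellation but via a Poincar\'e-type inequality on $(0,1)$ after a change of variables $y_i=y_i(x)$ built from each shock profile. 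That machinery---the cutoffs $\phi_1,\phi_2$ localizing the perturbation near each wave, the change of variables $x\mapsto y_i$, and Lemma~\ref{lem-poin}---is the core technical device and is absent from your sketch. Relatedly, the shift ODEs are not designed to ``cancel the most dangerous linear contributions''; they are chosen so that $\dot X_i Y_i$ produces a negative term $-\tfrac{\delta_i}{M}|\dot X_i|^2$, which after an algebraic reduction supplies exactly the squared average $(\int_0^1 w_i\,dy_i)^2$ needed to complete the Poincar\'e inequality.

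Second, your interaction bound $|\mathcal I_{\rm int}|\le C\delta_1\delta_2\,e^{-c(\delta_1+\delta_2)t}$ is too optimistic: the overlap of two exponential tails with rates $\delta_1$ and $\delta_2$ along a gap of width $\sim t$ decays only like $e^{-c\min(\delta_1,\delta_2)\,t}$ (cf.\ Lemma~\ref{lem:shock-interact-1}). This still integrates, but it is important for the independent-amplitude claim, since the slow wave controls the rate. Similarly, the shift bound you need is $|\dot X_i|\le C\|v-\tv\|_{L^\infty}$ (so $\le C(\e_0+\delta_0^{1/4})$ after the a~priori estimate), not $|\dot X_i|\le C\delta_0$; this is what yields \eqref{sx12} once both $\e_0$ and $\delta_0$ are small.
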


\begin{remark}
	1. It follows from the Rankine-Hugoniot condition \eqref{RH-condition} that the strength of 1-shock (resp. 2-shock) can be measured by $|v_--v_m|$ (resp. $|v_m-v_+|$). Thus, by the smallness condition \eqref{assind}, the strengths of the two shocks can be chosen independently. This removes the same order assumption, i.e. $|v_--v_m| + |v_--v_m| \lesssim \min(|v_--v_m|, |v_--v_m|)\ll1$ that is a crucial assumption in \cite{HM}.\\
	2. 
	The property \eqref{as} implies that 
	\[
	\lim_{t\to+\infty}\frac{X_i(t)}{t}=0,\quad\mbox{for}\quad i=1,2.
	\]
	Thus, the shift functions $X_i(t)$ grow at most sub-linearly w.r.t. $t$, by which the shifted composite wave 
	\[
	\big(\tv_1, \tu_1\big) \big(x-\sigma_1t-X_1(t)\big)+\big(\tv_2, \tu_2\big) \big(x-\sigma_2t-X_2(t)\big) - (v_m,u_m)
	\]
	time-asymptotically keeps the original composite wave.
\end{remark}

The remaining part of the paper is organized as follows. In Section \ref{sec:2}, we present a main idea of the proof, and useful estimates for the shock waves. Elementary estimates for the pressure and relative entropy are also presented. In Section \ref{sec:3}, we present the proof of the main theorem under the a priori estimates. The a priori estimates will be proved in the following two sections. In Section \ref{sec:4}, we use the method of $a$-contraction with shifts to estimate the $L^2$-perturbation. Then, the remaining estimates will be obtained  in Section \ref{sec:5}.

\section{Main ideas of the proof and useful lemmas}\label{sec:2}
\setcounter{equation}{0}
In this section, we present a main idea of the proof, and useful estimates for the shock waves and relative quantities. 

\subsection{Main ideas and the method of $a$-contraction with shifts}
The main tool to obtain the desired nonlinear stability is the method of $a$-contraction with shifts, which was introduced by the second author and Vasseur in \cite{KVARMA} (see also \cite{Vasseur-2013}) to study the stability of extremal shocks for the hyperbolic system of conservation laws such as the Euler system \eqref{eq:Euler}.  

This method has been extended to studying viscous models as follows. For the one-dimensional barotropic Navier-Stokes system, the contraction property of any large perturbations for a single viscous shock is proved in \cite{Kang-V-NS17}, and for a composite wave of two shocks in \cite{KV-2shock}.  Those results play a crucial role to prove the stability of entropy Riemann shocks of the isentropic Euler system in the class of inviscid limits from the Navier-Stokes system in \cite{Kang-V-NS17,KV-Inven}. As mentioned in Introduction, this method was also used in \cite{KVW3,WW} to show the long-time behavior of the barotropic Navier-Stokes system for the composition of shock and rarefaction in 1D, and for a single shock in multi-D. 
As applications of the method to other viscous hyperbolic systems, we also refer to \cite{Kang19,Kang-V-1,KVW} for viscous scalar conservation laws, \cite{KVW2} for the stability of a planar contact discontinuity of the 3D full Euler system in the class of zero dissipation limits,  and \cite{CKKV,CKV} for the viscous hyperbolic system arising from a chemotaxis model. \\

To illustrate a key idea of the method for the viscous system \eqref{eq:NS}, consider the entropy $\eta$ of the Euler system \eqref{eq:Euler} defined as 
\[\eta(U):=\frac{u^2}{2}+Q(v),\quad Q(v):=\frac{1}{(\gamma-1)v^{\gamma-1}},\]
where $U=(v,u)$. Then, the relative entropy $\eta(U|\overline{U})$ between two states $U$ and $\overline{U}$ is defined as
\[\eta(U|\overline{U}):=\eta(U)-\eta(\overline{U})-D\eta(\overline{U})(U-\overline{U})=\frac{|u-\overline{u}|^2}{2}+Q(v|\overline{v}),\]
where $Q(v|\overline{v}):=Q(v)-Q(\overline{v})-Q'(\overline{v})(v-\overline{v})$. Since $Q(v)$ is a strictly convex function in $v$, $Q(\cdot|\cdot)$ is locally quadratic in the sense that for any $0<a<b$, there exists $C>0$ such that
\[
C^{-1}|v_1-v_2|^2\le Q(v_1|v_2) \le C |v_1-v_2|^2,\qquad \forall v_1, v_2 \in [a,b].
\]

When $\overline{U}(x-\sigma t)$ is a single viscous shock, the method is to find a weight function $a:\bbr\to\bbr_+$ and a time-dependent shift function $X:\bbr_+\to\bbr$ such that the weighted relative entropy with shift is not increasing in time:
$$
\frac{d}{dt}\int_\R a(x-\sigma t -{X}(t))\eta(U(t,x)|\overline{U}(x-\sigma t - {X}(t)))\,dx \leq 0.
$$
First, by a standard computation based on the relative entropy method, the left-hand side can be decomposed into three parts (as in Lemma \ref{lem:req} for example):
\[
\mbox{LHS}= X'(t) Y(U) + \mathcal{J}^{\rm bad} (U)-\mathcal{J}^{\rm good} (U),
\]
where $ \mathcal{J}^{\rm bad} (U)$ and $\mathcal{J}^{\rm good} (U)$ consist of the all bad terms and all good terms respectively.
To make the right-hand side non-positive, we might use the typical energy method for parabolic equations. However, since the barotropic Naiver-Stokes system  has the dissipation in one variable only (more precisely, in the $u$ variable for \eqref{eq:NS}), the weight function $a$ would be found to provide an additional good term in terms of the $v$ variable, by which the bad terms could be represented only by the $u$ variables. Indeed, since $\sigma$ is a non-zero constant,
constructing a monotone function $a$ satisfying $\sigma a'>0$, we have a good term 
\[
-\sigma \int_\R a'(x-\sigma t -{X}(t))\eta(U(t,x)|\overline{U}(x-\sigma t - {X}(t)))\,dx.
\]
In fact, the weight function $a$ will be defined by the first component $\overline{v}$ of the viscous shock such that $a'$ localizes the perturbation in space as done by $\overline{v}'$, and the image of $a$ is a bounded open interval.  
Using the above term, we maximize all bad terms in terms of the $v$ variable, from which the remaining bad terms are related to the $u$ variable only, and localized by $a'$ or $\overline{v}'$.
To absorb the remaining bad terms by the diffusion term, we may use the following Poincar\'e-type inequality. 
\begin{lemma} \cite[Lemma 2.9]{Kang-V-NS17}\label{lem-poin}
	For any $f:[0,1]\to\bbr$ satisfying $\int_0^1 y(1-y)|f'|^2 dy<\infty$, 
	\beq\label{poincare}
	\int_0^1\left|f-\int_0^1 f dy \right|^2 dy\le \frac{1}{2}\int_0^1 y(1-y)|f'|^2 dy.
	\eeq
\end{lemma}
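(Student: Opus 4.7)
The plan is to recognize \eqref{poincare} as the sharp weighted Poincar\'e inequality associated with the degenerate Sturm--Liouville operator $L := -\frac{d}{dy}\bigl(y(1-y)\tfrac{d}{dy}\bigr)$ on $[0,1]$, whose coefficient vanishes exactly where the weight in the statement does. First I would pass to the mean-zero case by replacing $f$ with $f - \bar f$, where $\bar f := \int_0^1 f\,dy$, since neither side of \eqref{poincare} is affected. The problem then reduces to proving the Rayleigh-quotient bound
\[
\int_0^1 f^2\,dy \;\le\; \frac{1}{2}\int_0^1 y(1-y)(f')^2\,dy \qquad\text{whenever }\bar f = 0,
\]
so the constant $\tfrac12$ should emerge as the reciprocal of the first nonzero eigenvalue of $L$ restricted to the mean-zero subspace.

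The central tool is the basis of shifted Legendre polynomials $\tilde P_n(y) := P_n(2y-1)$, $n = 0,1,2,\ldots$. Translating the classical Legendre equation $((1-x^2)P_n'(x))' + n(n+1)P_n(x) = 0$ from $[-1,1]$ to $[0,1]$ via $x = 2y-1$ yields the spectral identity $L \tilde P_n = n(n+1)\tilde P_n$, while orthogonality on $[-1,1]$ becomes $\int_0^1 \tilde P_n \tilde P_m\,dy = \frac{1}{2n+1}\delta_{nm}$, and the $\{\tilde P_n\}$ form a complete orthonormal-up-to-normalization basis of $L^2([0,1])$. Writing $f = \sum_{n\ge 0} a_n \tilde P_n$, the mean-zero condition is precisely $a_0 = 0$, and Parseval gives $\int_0^1 f^2\,dy = \sum_{n\ge 1} a_n^2/(2n+1)$. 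For the weighted Dirichlet energy, an integration by parts in which the boundary terms vanish due to the factor $y(1-y)$, combined with the eigenvalue equation and cross-term orthogonality, yields
\[
\int_0^1 y(1-y)(f')^2\,dy \;=\; \sum_{n\ge 1}\frac{n(n+1)}{2n+1}\,a_n^2.
\]
Since $n(n+1) \ge 2$ for every $n \ge 1$, with equality only at $n=1$, a termwise comparison delivers the desired bound, and identifies $f(y)-\bar f \propto 2y-1$ as the unique extremal (up to scaling).

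The main technical obstacle is functional-analytic: the admissible class $\{f : \int_0^1 y(1-y)(f')^2\,dy < \infty\}$ is larger than $H^1([0,1])$ and may contain functions that are merely $L^2$ near the endpoints, so the Legendre expansion and termwise integration by parts require justification. I would handle this by first establishing the inequality for $f \in C^\infty([0,1])$, where the expansion converges in both $L^2$ and the weighted energy norm, and then extending to the full admissible class by a truncation-and-mollification argument: restrict to $[\varepsilon, 1-\varepsilon]$, smooth, apply the inequality there, and pass to the limit $\varepsilon \to 0^+$ using monotone convergence on the weighted energy side and dominated convergence on the $L^2$ side. This density step, while routine, is the only delicate piece; everything else is a direct consequence of the spectral gap $\lambda_1 = 2$ for the Jacobi-type operator $L$.
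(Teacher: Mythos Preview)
Your proof is correct. The paper does not prove this lemma at all; it simply quotes it as \cite[Lemma 2.9]{Kang-V-NS17} and uses the result as a black box. So there is no ``paper's own proof'' to compare against here.

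Your spectral argument via the shifted Legendre polynomials is the natural and sharp one: the operator $L=-\tfrac{d}{dy}\bigl(y(1-y)\tfrac{d}{dy}\bigr)$ has spectrum $\{n(n+1)\}_{n\ge 0}$ on $L^2([0,1])$, and the constant $\tfrac12$ is exactly the reciprocal of the spectral gap $\lambda_1=2$. The density step you flag is indeed the only point requiring care; note that the hypothesis $\int_0^1 y(1-y)|f'|^2\,dy<\infty$ already forces $f\in L^2([0,1])$ (by Cauchy--Schwarz, $|f(y)-f(1/2)|\lesssim\sqrt{|\log y|}$ near $y=0$, and similarly near $y=1$), so the left-hand side is automatically finite and the approximation argument goes through. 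Your identification of the extremal $f-\bar f\propto 2y-1$ is also a nice bonus that the paper does not mention.
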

However, to apply Lemma \ref{lem-poin}, we may need the other good term as an average of linear perturbation on $u$-variable, which together with the bad terms would give a variance of the perturbation that could be absorbed by the dissipation as in the Poincar\'e-type inequality. Here, the desired good term would be extracted from the shift part $X'(t) Y(U)$ by a sophisticated construction of the shift $X(t)$. This gives the desired contraction estimate.\\

However, the composition \eqref{com2} of two viscous shocks leads to several problematic issues. Since the composition of two viscous shocks is not an exact solution to the Navier-Stokes equations, unlike the case of a single viscous shock, due to the nonlinear terms in the equations, we need to control some error terms on the nonlinear interactions between two shock waves. Second, as we consider two shock waves, we should construct two shift functions, say $X_1$ and $X_2$, and two weight functions to control perturbation near each shock wave, based on the above method for a single wave. On top of that, since all the bad terms are localized by the derivative of each weight or shock, whereas the diffusion term is not localized, we need to localize the diffusion term by introducing some auxiliary localization functions. The localization functions will be defined by two shift functions as in \eqref{localization}, by constructing the two shift functions to be well-separated like \eqref{sx12}.

\subsection{Viscous shock waves}
We review the useful estimates of viscous shocks and their derivatives that will be used throughout the paper.
\begin{lemma}\cite[Lemma 5.1]{KV-2shock}\label{lem:shock-est}
	For a given constant $U_*:=(v_*,u_*)\in\bbr_+\times\bbr$, there exist positive constants $\delta_0$, $C$, $C_1$ and $C_2$ such that the following holds. Let $U_-:=(v_-,u_-)$, $U_m:=(v_m,u_m)$, and $U_+:=(v_+,u_+)\in\bbr_+\times\bbr$ be any constants such that $U_-,U_m,U_+\in B_{\delta_0}(U_*)$, and $|p(v_-)-p(v_m)|=:\delta_1<\delta_0$ and $|p(v_m)-p(v_+)|=:\delta_2<\delta_0$. Let $\tU_1=(\tv_1,\tu_1)$ and $\tU_2=(\tv_2,\tu_2)$ be the 1- and 2-shocks connecting from $U_-$ to $U_m$ and from $U_m$ to $U_+$ respectively, satisfying $\tv_1(0)=\frac{v_-+v_m}{2}$ and $\tv_2(0)=\frac{v_m+v_+}{2}$ without loss of generality. Then, the following estimates hold: for each $i=1,2$,
	\begin{align}
	\begin{aligned}\label{shock-est}
	& v_i' \sim u_i'\quad\mbox{i.e.,}\quad C^{-1} \tv_i'(x-\sigma_it) \le \tu_i'(x-\sigma_it) \le C\tv_i'(x-\sigma_it),\quad x\in\bbr,\, t>0,\\
	&C^{-1}\delta_ie^{-C_1\delta_i|x-\sigma_it|}\le \tv_i(x-\sigma_it)-v_m\le C\delta_ie^{-C_2\delta_i|x-\sigma_it|},\quad (-1)^i(x- \sigma_i t)\le 0,\\
	&-C^{-1}\delta_i^2e^{-C_1\delta_i|x-\sigma_it|}\le  \tv_i'(x-\sigma_it)\le -C\delta_i^2e^{-C_2\delta_i|x-\sigma_it|},\quad x\in\bbr,\quad t>0,\\
	%&C^{-1}\delta_2e^{-C_1\delta_2|x-\sigma_2t|}\le \tv_2(x-\sigma_2t)-v_m\le C\delta_2e^{-C_2\delta_2|x-\sigma_2t|},\quad x\le \sigma_2 t,\\
	%&-C^{-1}\delta_2^2e^{-C_1\delta_2|x-\sigma_2t|}\le  \tv_2'(x-\sigma_2t)\le -C\delta_2^2e^{-C_2\delta_2|x-\sigma_2t|},\quad x\in\bbr,\quad t>0.
	\end{aligned}		
	\end{align}
	in addition,
	\begin{equation}\label{second-order}
	|(\tv_i''(x-\sigma_it),\tu_i''(x-\sigma_it))|\le C\delta_i|(\tv_i'(x-\sigma_it),\tu_i'(x-\sigma_it))|.
	\end{equation}
\end{lemma}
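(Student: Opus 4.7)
My plan is to reduce the traveling-wave system \eqref{viscous-shock-u} to a single scalar first-order ODE for each $\tv_i$, from which every estimate in the lemma follows by elementary ODE analysis. Integrating the first equation of \eqref{viscous-shock-u} once and invoking the far-field values yields the pointwise identity $\tu_i'=-\sigma_i\tv_i'$; since $|\sigma_i|$ is bounded above and away from $0$ uniformly on $B_{\delta_0}(U_\ast)$, this immediately gives the comparability $\tv_i'\sim\tu_i'$. Substituting the identity into the second equation of \eqref{viscous-shock-u} and integrating once (using that $\tv_i',\tu_i'$ vanish at infinity) produces the scalar profile ODE
\[
\tv_i'(\xi)\;=\;-\frac{\tv_i(\xi)}{\sigma_i}\,h_i(\tv_i(\xi)),\qquad h_i(v):=\sigma_i^2(v-v_m)+p(v)-p(v_m).
\]
By \eqref{RH-condition}, $h_i$ vanishes at both endpoint values of the profile; strict convexity of $p$ makes $h_i$ strictly convex, hence of fixed sign in between, which together with the sign of $\sigma_i$ forces $\tv_i$ to be monotone with the correct orientation. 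A standard phase-line argument then delivers existence, uniqueness up to the normalization $\tv_i(0)=$\,midpoint, and convergence to the prescribed end states at $\pm\infty$.

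To extract the exponential rates, I would linearize the profile ODE at $v_m$. A Taylor expansion combined with the Rankine--Hugoniot relation $\sigma_i^2=(p(v_m)-p(v_{\text{out}}))/(v_{\text{out}}-v_m)$ gives
\[
h_i'(v_m)=\sigma_i^2+p'(v_m)=-\tfrac12 p''(v_m)\,(v_{\text{out}}-v_m)+O(\delta_i^2),
\]
so the linearized coefficient $\lambda_i:=-v_m h_i'(v_m)/\sigma_i$ has the correct sign for decay and magnitude comparable to $\delta_i$ uniformly on $B_{\delta_0}(U_\ast)$. Comparing the scalar ODE against two linear ODEs (an upper and a lower one) on the half-line $(-1)^i\xi\le 0$ where $\tv_i\to v_m$ then yields both inequalities
\[
C^{-1}\delta_i e^{-C_1\delta_i|\xi|}\le \tv_i(\xi)-v_m\le C\delta_i e^{-C_2\delta_i|\xi|}.
\]

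The two-sided bound on $|\tv_i'|$ valid on all of $\bbr$ now follows by writing $|\tv_i'|=(\tv_i/|\sigma_i|)|h_i(\tv_i)|$. Since $h_i$ has simple zeros at its two roots with slopes of size $\delta_i$ and with bounded second derivative $h_i''=p''$, the factorization $h_i(v)=(v-v_m)(v-v_{\text{out}})\,q_i(v)$ shows $|h_i(v)|$ is comparable to $\delta_i\min(|v-v_m|,|v-v_{\text{out}}|)$ throughout the profile, and combining with the one-sided exponential decay at each endpoint gives a bound of the form $|\tv_i'|\sim\delta_i^2 e^{-c\delta_i|\xi|}$ on all of $\bbr$, which is the third line of \eqref{shock-est}. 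Differentiating the profile equation once more yields
\[
\tv_i''=-\frac{\tv_i'}{\sigma_i}\bigl(h_i(\tv_i)+\tv_i\,h_i'(\tv_i)\bigr),
\]
and since $|h_i|\le C\delta_i^2$ and $|h_i'|\le C\delta_i$ uniformly on the profile, this produces $|\tv_i''|\le C\delta_i|\tv_i'|$; the analogous bound on $\tu_i''$ is then immediate from $\tu_i'=-\sigma_i\tv_i'$, giving \eqref{second-order}.

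The only real technical point, in my view, is the uniformity of the constants $C,C_1,C_2$ in the parameters, which boils down to the lower bound $|h_i'(v_m)|\ge c\,\delta_i$ (and its analogue at $v_{\text{out}}$). This is where the strict convexity $p''>0$ enters essentially, via the Taylor expansion above, and it is really the only place where the smallness hypothesis $\delta_1,\delta_2<\delta_0$ gets used. Everything else is soft ODE analysis of a scalar equation on a bounded interval.
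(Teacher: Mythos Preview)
The paper does not supply its own proof of this lemma; it is quoted verbatim from \cite[Lemma~5.1]{KV-2shock} and used as a black box. Your argument is precisely the standard reduction underlying that reference: eliminate $\tu_i$ via the first profile equation, integrate the momentum equation once to obtain the scalar ODE $\tv_i'=-(\tv_i/\sigma_i)h_i(\tv_i)$, read off the monotonicity from the convexity of $h_i$, and get the exponential rates by linearizing at the hyperbolic rest point $v_m$ (and symmetrically at $v_{\text{out}}$). The computation $h_i'(v_m)=\sigma_i^2+p'(v_m)=-\tfrac12 p''(v_m)(v_{\text{out}}-v_m)+O(\delta_i^2)$ is exactly the mechanism that produces the $\delta_i$ scaling of the rate and the $\delta_i^2$ scaling of $|\tv_i'|$, and your factorization argument for $h_i$ cleanly gives the two-sided bound on the whole line. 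The second-derivative estimate follows as you say.

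Two minor remarks. First, the identity $\tu_i'=-\sigma_i\tv_i'$ is read off directly from the first equation of \eqref{viscous-shock-u}; no integration is needed (you wrote ``integrating once''). Second, the uniformity of constants on $B_{\delta_0}(U_*)$ also uses that $v$ stays in a compact subset of $(0,\infty)$ so that $p,p',p''$ and $1/|\sigma_i|$ are uniformly bounded above and below; you allude to this but it is worth stating once, since it is what makes the comparison with the linearized ODE uniform in the shock strength.
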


\begin{remark}
	Throughout the paper, we will use Lemma \ref{lem:shock-est} with $U_*=U_+$. Thus, the constants $\delta_0$, $C$, $C_1$ and $C_2$ depend only on $U_+$. 
\end{remark}

\subsection{Estimate on the relative quantities}
As our method is based on the estimate of relative entropy, we need to control the relative quantities, such as the relative pressure $p(v|w)$ or the relative internal energy $Q(v|w)$. Recall that the relative quantities are defined as
\[p(v|w) = p(v)-p(w)-p'(w)(v-w),\quad Q(v|w) = Q(v)-Q(w)-Q'(w)(v-w).\]
Considering the Taylor expansion, it is expected that they are almost quadratic quantities at least locally. The following estimates show the exact estimate on this locally quadratic behavior of the relative quantities. The proof of the lemma can be found in \cite{Kang-V-NS17}.

\begin{lemma}\label{lem-rel-quant}
	Let $\gamma>1$ and $v_+$ be given constants. Then, there exist constants $C,\delta_*>0$ such that the following assertions hold:
	\begin{enumerate}
		\item For any $v,w$ satisfying $0<w<2v_+$ and $0<v<3v_+$,
		\begin{equation}\label{est-rel-1}
		|v-w|^2\le CQ(v|w),\quad |v-w|^2\le Cp(v|w).
		\end{equation}
		\item For any $v,w$ satisfying $v,w>v_+/2$,
		\begin{equation}\label{est-rel-2}
		|p(v)-p(w)|\le C|v-w|.
		\end{equation}
		\item For any $0<\delta<\delta_*$ and any $(v,w)\in\bbr_+^2$ satisfying $|p(v)-p(w)|<\delta$ and $|p(w)-p(v_+)|<\delta$,
		\begin{align}
		\begin{aligned}\label{est-rel-3}
		&p(v|w) \le \left(\frac{\gamma+1}{2\gamma}\frac{1}{p(w)}+C\delta\right)|p(v)-p(w)|^2,\\
		&Q(v|w) \ge \frac{p(w)^{-\frac{1}{\gamma}-1}}{2\gamma}|p(v)-p(w)|^2-\frac{1+\gamma}{3\gamma^2}p(w)^{-\frac{1}{\gamma}-2}(p(v)-p(w))^3,\\ 
		&Q(v|w)\le \left(\frac{p(w)^{-\frac{1}{\gamma}-1}}{2\gamma}+C\delta\right)|p(v)-p(w)|^2.
		\end{aligned}
		\end{align}
	\end{enumerate}
\end{lemma}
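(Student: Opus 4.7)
For Part (1), the argument is the standard one-sided Taylor expansion: write
\[
Q(v|w) = \tfrac{1}{2}Q''(\xi)(v-w)^2, \qquad p(v|w) = \tfrac{1}{2}p''(\xi)(v-w)^2
\]
with $\xi$ between $v$ and $w$, and use the explicit formulas $Q''(v) = \gamma v^{-\gamma-1}$ and $p''(v) = \gamma(\gamma+1) v^{-\gamma-2}$. The hypothesis $0 < v < 3v_+$, $0 < w < 2v_+$ forces $\xi \in (0, 3v_+)$, so both $Q''(\xi)$ and $p''(\xi)$ admit uniform positive lower bounds depending only on $v_+$ and $\gamma$, yielding $|v-w|^2 \le CQ(v|w)$ and $|v-w|^2 \le Cp(v|w)$. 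Part (2) follows directly from the mean value theorem applied to $p$ with $\xi > v_+/2$, giving $|p'(\xi)| = \gamma\xi^{-\gamma-1} \le \gamma(v_+/2)^{-\gamma-1}$.

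\textbf{For Part (3)} the key step is to re-parametrize by the pressure: with $P := p(v)$, $W := p(w)$, and $v = P^{-1/\gamma}$, define
\[
\Psi(P) := Q(v|w) = \tfrac{1}{\gamma-1}P^{(\gamma-1)/\gamma} - Q(w) + p(w)\bigl(P^{-1/\gamma} - w\bigr),
\]
\[
\Pi(P) := p(v|w) = P - p(w) + \gamma w^{-\gamma-1}\bigl(P^{-1/\gamma} - w\bigr).
\]
A direct computation gives $\Psi(W) = \Psi'(W) = 0$, $\Pi(W) = \Pi'(W) = 0$, together with
\[
\Psi''(W) = \tfrac{1}{\gamma}\,p(w)^{-1/\gamma - 1}, \qquad \Pi''(W) = \tfrac{\gamma+1}{\gamma\, p(w)},
\]
which already match the leading coefficients in the statement. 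The two upper bounds in (3) then follow from the Lagrange form $\Psi(P) = \tfrac{1}{2}\Psi''(\xi)(P-W)^2$ and $\Pi(P) = \tfrac{1}{2}\Pi''(\xi)(P-W)^2$, combined with the local continuity estimates $\Psi''(\xi) \le \Psi''(W) + C\delta$ and $\Pi''(\xi) \le \Pi''(W) + C\delta$, which use $|P-W| < \delta$ and $|p(w) - p(v_+)| < \delta$ and the smooth dependence of the coefficients on $P$, $W$.

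\textbf{The main obstacle} is the refined lower bound on $Q(v|w)$, which retains an explicit cubic correction with no $O(\delta)$ slack. For this I would use Taylor's theorem with integral remainder at order three,
\[
\Psi(P) = \tfrac{1}{2}\Psi''(W)(P-W)^2 + \tfrac{1}{6}\Psi'''(W)(P-W)^3 + \tfrac{1}{6}\int_W^P (P-s)^3\,\Psi^{(4)}(s)\,ds.
\]
A direct computation yields $\Psi'''(W) = -\tfrac{2(\gamma+1)}{\gamma^2}\,p(w)^{-1/\gamma - 2}$, which after division by $6$ reproduces the stated coefficient $\tfrac{1+\gamma}{3\gamma^2}$. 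The delicate point is showing that the integral remainder is \emph{non-negative}: one checks that $\Psi^{(4)}(P)$ factors as a positive constant times $P^{-(4\gamma+1)/\gamma}\bigl[(3\gamma+1)W - P\bigr]$, which stays strictly positive throughout the integration range once $\delta_*$ is taken small enough that $|s-W|\le |P-W|<\delta$ keeps $s$ well below $(3\gamma+1)W$. A sign-chasing argument — the signs of $(P-s)^3$ and $ds$ flip together under $P \lessgtr W$ — then shows the integral is $\ge 0$ in either case, delivering the inequality exactly as stated. The smallness $\delta < \delta_*$ enters precisely to guarantee positivity of $\Psi^{(4)}$ on the integration interval.
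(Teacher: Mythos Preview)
Your argument is correct. The paper does not supply its own proof of this lemma; it simply refers the reader to \cite{Kang-V-NS17}. Your approach---Taylor expansion with Lagrange remainder for Parts~(1) and~(2), and reparametrization by the pressure variable $P=p(v)$ followed by explicit Taylor expansion of $\Psi(P)=Q(v|w)$ and $\Pi(P)=p(v|w)$ about $P=W=p(w)$ for Part~(3)---is the standard route and all your computed coefficients $\Psi''(W)$, $\Pi''(W)$, $\Psi'''(W)$ check out. The sign analysis for the integral remainder in the lower bound of $Q(v|w)$ is also sound: your factorization $\Psi^{(4)}(s) = \tfrac{(\gamma+1)(2\gamma+1)}{\gamma^4}\,s^{-1/\gamma-4}\bigl[(3\gamma+1)W-s\bigr]$ is correct, and since $|s-W|\le|P-W|<\delta$ while $(3\gamma+1)W-W=3\gamma W$ is bounded below by roughly $3\gamma\,p(v_+)$, choosing $\delta_*$ small (say $\delta_*<p(v_+)$) indeed keeps $\Psi^{(4)}(s)>0$ throughout the integration range.
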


\section{Proof of Theorem \ref{thm:main}}\label{sec:3}
\setcounter{equation}{0}
In this section, we present a main part of proof for Theorem \ref{thm:main}. %We first sketch main ideas of the proof.
%\subsection{Ideas of the proof}

\subsection{Local existence}
In order to get the desired result on time-asymptotic stability of $H^1$-perturbations of a composite wave connecting two different constant states  at far-fields, we need to recall the well-known result on local-in-time existence of solutions to \eqref{eq:NS} connecting the two different constant states in $H^1$-norm.  
\begin{proposition}\cite{MV-sima,Solo}\label{prop:local-H1}
Let $\underline{v}$ and $\underline{u}$ be smooth monotone functions such that
\[\underline{v}(x) = v_\pm,\quad \underline{u}(x) = u_\pm,\quad \mbox{for}\quad \pm x\ge 1.\]
Then, for any constants $M_0$, $M_1$, $\underline{\kappa}_0$, $\overline{\kappa}_0$, $\underline{\kappa}_1$, $\overline{\kappa}_1$ with 
\[0<M_0<M_1,\quad\mbox{and}\quad 0< \underline{\kappa}_1<\underline{\kappa}_0 < \overline{\kappa}_0 < \overline{\kappa}_1,\]
there exists a constant $T_0>0$ such that if the initial data $(v_0,u_0)$ satisfy
\[\|v_0-\underline{v}\|_{H^1(\bbr)}+\|u_0-\underline{u}\|_{H^1(\bbr)}\le M_0,\quad \mbox{and}\quad  \underline{\kappa}_0\le v_0(x)\le \overline{\kappa}_0,\quad \forall x\in\bbr,\]
then the Navier-Stokes equations \eqref{eq:NS} admit a unique solution $(v,u)$ on $[0,T_0]$ satisfying
\begin{align*}
	&v-\underline{v}\in C([0,T_0];H^1(\bbr)),\\
	&u-\underline{u}\in C([0,T_0];H^1(\bbr))\cap L^2([0,T_0];H^2(\bbr)),
\end{align*}
together with
\[
\|v-\underline{v}\|_{L^\infty([0,T_0];H^1(\bbr))}+\|u-\underline{u}\|_{L^\infty([0,T_0];H^1(\bbr))}\le M_1,
\]
and
\beq\label{bddab}
\underline{\kappa}_1\le v(t,x)\le \overline{\kappa}_1,\quad \forall (t,x)\in[0,T_0]\times\bbr.
\eeq
\end{proposition}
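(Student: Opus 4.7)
The result is a standard local well-posedness statement for the one-dimensional barotropic Navier--Stokes system in Lagrangian coordinates, and the plan is to combine a Picard-type iteration with parabolic energy estimates, taking care to preserve the positive lower and upper bounds on $v$. I would first reformulate the problem around the reference profile by setting $\phi:=v-\underline{v}$ and $\psi:=u-\underline{u}$, so that $(\phi,\psi)\in H^1(\R)$ with vanishing far-field values. Since $\underline{v},\underline{u}$ are time-independent, \eqref{eq:NS} becomes
\begin{align*}
\phi_t-\psi_x&=\underline{u}_x,\\
\psi_t+\bigl(p(v)-p(\underline{v})\bigr)_x-\Bigl(\tfrac{u_x}{v}\Bigr)_x&=-p(\underline{v})_x,
\end{align*}
with smooth, compactly-varying forcing depending only on the fixed reference profile.

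Next I would set up an iteration: starting from $v^{(0)}\equiv\underline{v}$, given $v^{(n)}$ satisfying the pointwise bound $\underline{\kappa}_1\le v^{(n)}\le\overline{\kappa}_1$, solve the linear parabolic equation for $u^{(n+1)}$ obtained by freezing $v^{(n)}$ in the coefficient $1/v$ (which is uniformly elliptic thanks to the bound on $v^{(n)}$), and then define $v^{(n+1)}$ via the ODE-in-time $v^{(n+1)}_t=u^{(n+1)}_x$ with initial datum $v_0$. Standard $L^2$-energy estimates give $\|\psi^{(n+1)}\|_{L^\infty_tH^1_x}\le C(M_0,M_1,\underline{\kappa}_1,\overline{\kappa}_1)$, and parabolic regularity upgrades this to $\psi^{(n+1)}\in L^2_tH^2_x$; an energy estimate on the successive differences $\delta u^{(n+1)}:=u^{(n+1)}-u^{(n)}$ and $\delta v^{(n+1)}$ combined with Gronwall shows the map is a contraction in $C([0,T_0];H^1(\R))$ for $T_0$ sufficiently small. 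Uniqueness will follow from the very same difference estimate applied to two hypothetical solutions.

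The main obstacle—and the step that actually dictates the size of $T_0$—is propagating the strict bounds $\underline{\kappa}_1\le v\le\overline{\kappa}_1$, since the viscous coefficient $1/v$ degenerates when $v$ approaches $0$ and the relative-entropy machinery of the paper requires $v$ to stay in a fixed compact subinterval of $(0,\infty)$. For this, I would integrate $v_t=u_x$ in time to get
\[
v(t,x)-v_0(x)=\int_0^t u_x(s,x)\,ds,
\]
and invoke the embedding $H^1(\R)\hookrightarrow L^\infty(\R)$ together with the parabolic bound on $u$ to conclude
\[
\Bigl\|\int_0^t u_x(s,\cdot)\,ds\Bigr\|_{L^\infty_x}\le T_0^{1/2}\,\|u_x\|_{L^2_tL^\infty_x}\le C\,T_0^{1/2}\,\|u\|_{L^2_tH^2_x}.
\]
Choosing $T_0$ small (in terms of $M_0,M_1$ and the gaps $\underline{\kappa}_0-\underline{\kappa}_1$, $\overline{\kappa}_1-\overline{\kappa}_0$) then forces $v$ to remain in $[\underline{\kappa}_1,\overline{\kappa}_1]$, closing the a priori assumption used in the iteration and delivering \eqref{bddab}. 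The resulting $T_0$ depends only on the listed constants, as claimed.
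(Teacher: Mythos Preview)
The paper does not give its own proof of this proposition: it is quoted from the references \cite{MV-sima,Solo} as a known local well-posedness result, so there is no argument in the paper to compare against. Your sketch is a correct and standard route to such a result---reduce to perturbations of the fixed smooth profile, run a Picard iteration that freezes $v$ in the viscous coefficient, use parabolic energy estimates to control $\psi$ in $L^\infty_tH^1_x\cap L^2_tH^2_x$, transport $\phi$ by $\phi_t=\psi_x+\underline{u}_x$, and close the pointwise bounds on $v$ via $\|v(t)-v_0\|_{L^\infty_x}\le C T_0^{1/2}\|u\|_{L^2_tH^2_x}$, choosing $T_0$ small in terms of the gaps $\underline{\kappa}_0-\underline{\kappa}_1$ and $\overline{\kappa}_1-\overline{\kappa}_0$. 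This is exactly the type of argument carried out in the cited works, so your proposal is in line with what the authors are invoking.
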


\subsection{Construction of shifts}
As desired, we should show that a small $H^1$-perturbation of a composite wave of two viscous shocks is orbitally stable for a large time, more precisely, the perturbation uniformly converges to the composite wave up to shifts where each shock is shifted by $X_i(t)$:
\begin{align}
	\begin{aligned}\label{composite_wave}
	&(\tv^{X_1,X_2},\tu^{X_1,X_2})(t,x) \\
	&\qquad := \left(\tv_1^{X_1}(x-\sigma_1t)+\tv_2^{X_2}(x-\sigma_2t)-v_m,\tu_1^{X_1}(x-\sigma_1t)+\tu_2^{X_2}(x-\sigma_2t)-u_m\right),
	\end{aligned}
\end{align}
where $F^{X_i}$ denotes a function $F$ shifted by $X_i$, that is, $F^{X_i}(x) := F(x-X_i(t))$ for any function $F$. This notation will be used throughout the paper. \\
We here introduce the shift functions explicitly, from which we could obtain a bound of derivative of shifts (at least locally-in-time) in Lemma \ref{lem:xex}, and obtain the desired a priori estimates  in Proposition \ref{prop:H1-estimate}. Those will be used for the continuation argument in Section \ref{subsec:global}.
We define a pair of shifts $(X_1, X_2)$ as a solution to the system of ODEs:
\begin{equation}\label{X(t)}
\left\{
\begin{array}{ll}
\di \dot{X}_1(t)=-\frac{M}{\delta_1}\bigg[
 \int_{\bbr}\frac{a^{X_1,X_2}}{\sigma_1}(\widetilde{h}^{X_1}_1)_x(p(v)-p(\widetilde{v}^{X_1,X_2}))\,dx  -\int_{\bbr}a^{X_1,X_2} (p(\widetilde{v}^{X_1}_1))_x(v-\widetilde{v}^{X_1,X_2})\,dx\bigg],\\ [4mm]
 \di \dot{X}_2(t)=-\frac{M}{\delta_2}\bigg[
 \int_{\bbr}\frac{a^{X_1,X_2}}{\sigma_2}(\widetilde{h}^{X_2}_2)_x(p(v)-p(\widetilde{v}^{X_1,X_2}))\,dx  -\int_{\bbr}a^{X_1,X_2} (p(\widetilde{v}^{X_2}_2))_x(v-\widetilde{v}^{X_1,X_2})\,dx\bigg],\\ [4mm]
\di  X_1(0)=X_2(0)=0,
\end{array}
\right.
\end{equation}
where $a^{X_1,X_2}$ is the shifted weight function defined in \eqref{weight}, $\widetilde{h}_i:=\widetilde{u}_i-(\ln \widetilde v_i)_x$, and $M$ is the specific constant chosen as $M:=\frac{5(\gamma+1)}{8\gamma p(v_m)} \big(-p'(v_m)\big)^{\frac32}$, which will be used in the proof of Proposition \ref{prop:H1-estimate}.
However, for well-posedness of the above ODEs in Lemma \ref{lem:xex}, we only need the following assumption for the shifted weight function at this point: $a^{X_1,X_2}$ is a $C^1$-function of $(x, X_1, X_2)$ with finite $C^1$-norm. This is verified by the explicit one defined in \eqref{weight}.\\
The following lemma ensures that \eqref{X(t)} has a unique absolutely continuous solution at least for the lifespan $[0,T_0]$ of solution $v$ satisfying \eqref{bddab}.
\begin{lemma}\label{lem:xex}
For any $c_1,c_2>0$, there exists a constant $C>0$ such that the following is true.  For any $T>0$, and any   function $v\in L^\infty ((0,T)\times \R)$ %such that for some positive constants $c_1,c_2, T$,
verifying
\beq\label{odes}
c_1 \le v(t,x)\le c_2,\qquad \forall (t,x)\in [0,T]\times \bbr,
\eeq
 the system \eqref{X(t)} has a unique absolutely continuous solution $(X_1,X_2)$ on $[0,T]$. Moreover, %and there exists a constant $C>0$ such that
\beq\label{roughx}
|X_1(t)| + |X_2(t)|  \le Ct,\quad \forall t\le T.
\eeq
\end{lemma}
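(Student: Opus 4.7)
The plan is to recast \eqref{X(t)} as a Carathéodory system and then apply the classical Carathéodory existence and uniqueness theorem. Writing the two right-hand sides of \eqref{X(t)} as $F_i(t, X_1, X_2)$, $i=1,2$, the problem reduces to verifying three properties: (i) $t \mapsto F_i(t, X_1, X_2)$ is measurable for each fixed $(X_1,X_2)$; (ii) there is a constant $L$ such that $|F_i(t, X_1, X_2) - F_i(t, Y_1, Y_2)| \le L(|X_1 - Y_1| + |X_2 - Y_2|)$ for a.e.\ $t \in [0,T]$ and all $(X_1,X_2), (Y_1,Y_2)$; and (iii) a uniform bound $|F_i(t, X_1, X_2)| \le C$. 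Once these are in place, the Carathéodory theorem produces a unique absolutely continuous solution on $[0,T]$, and the linear bound \eqref{roughx} will follow from $X_i(0)=0$ together with (iii).

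For (iii), I would use Lemma \ref{lem:shock-est}, which gives pointwise exponential decay of $\tv_i'$ and $\tu_i'$ on all of $\bbr$; this propagates to $(\thi)_x = \tu_i' - (\ln \tv_i)_{xx}$ and $(p(\tv_i))_x = p'(\tv_i)\tv_i'$, making them $L^1(\bbr)$ with a uniform-in-$t$ bound. Combined with the assumed pointwise bound \eqref{odes} (which controls $|p(v)-p(\tv^{X_1,X_2})|$ and $|v-\tv^{X_1,X_2}|$ in $L^\infty$) and the assumed uniform $C^0$-bound on $a^{X_1,X_2}$, each integrand in \eqref{X(t)} is dominated by a fixed $L^1$ function, so $|F_i| \le C$ with $C$ depending only on $c_1, c_2$ and the fixed wave parameters. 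For (ii), I would differentiate $F_i$ with respect to $X_j$ under the integral sign; the $X_j$-dependence enters only through the weight $a^{X_1,X_2}$ (whose $X_j$-derivative is bounded by the assumed finite $C^1$-norm) and through the shifted profiles $\tv_i^{X_i}, \thi^{X_i}$ and their combination $\tv^{X_1,X_2}$, for which differentiation in $X_j$ coincides with $-\partial_x$ of the shifted profile. By \eqref{second-order} together with the viscous shock profile ODE, these $x$-derivatives again decay exponentially. Hence every differentiated integrand is majorized by a fixed $L^1$ function, yielding a uniform Lipschitz constant. Property (i) is immediate: since $v \in L^\infty((0,T)\times\bbr)$ is jointly measurable and the other factors are continuous in $t$, the integrand is jointly $(t,x)$-measurable and Fubini gives measurability of $F_i(\cdot, X_1, X_2)$ for each $(X_1,X_2)$.

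The main (and only) subtle point is confirming that the exponential spatial decay of the traveling-wave profiles survives every differentiation required in (ii). This is essentially automatic: differentiation in a shift parameter is the same as $-\partial_x$ on the shifted profile, and bootstrapping the viscous shock ODE in \eqref{viscous-shock-u} together with the first- and second-order estimates of Lemma \ref{lem:shock-est} propagates the exponential decay to all spatial derivatives needed here. With (i)--(iii) verified, Carathéodory's theorem produces the unique absolutely continuous solution $(X_1, X_2)$ on $[0,T]$, and then $|\dot X_i(t)| \le C$ combined with $X_i(0)=0$ yields $|X_i(t)| \le Ct$, which is exactly \eqref{roughx}.
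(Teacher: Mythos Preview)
Your proposal is correct and follows essentially the same approach as the paper: both establish a uniform bound $|F_i(t,X_1,X_2)|\le C$ via the $L^1$-integrability of the shock derivatives together with the $L^\infty$ control on $v$ and $a$, and a uniform Lipschitz bound in $(X_1,X_2)$ by differentiating under the integral sign, then invoke a Carath\'eodory/Cauchy--Lipschitz-type result to obtain the unique absolutely continuous solution and the linear growth \eqref{roughx}. Your write-up is in fact slightly more explicit about the measurability in $t$ and the mechanism behind the Lipschitz estimate, whereas the paper compresses the latter into a single ``likewise'' and cites external lemmas; otherwise the arguments coincide.
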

\begin{proof}
Let $F(t,X_1,X_2)$ denote the right-hand side of the system \eqref{X(t)}. \\
Using \eqref{odes} and the facts that the $C^1$-norm of $a^{X_1,X_2}$ w.r.t. $(x,X_1,X_2)$ is finite, and
\[
\|\widetilde h_i\|_{C^2} <\infty,\quad\|\widetilde v_i\|_{C^2} <\infty,\quad  \|(\widetilde h_i)_x\|_{L^1} \le C\delta_i,\quad \|(\widetilde v_i)_x\|_{L^1} \le C\delta_i,
\]
we have
\begin{align}
		\begin{aligned}\label{temco}
&\sup_{X_1,X_2\in\bbr}|F(t,X_1,X_2)|\\
&\quad \le C\|a\|_{C^1} \sum_{i=1}^2  \frac{1}{\delta_i} \Big(\|(p(\widetilde v_i)\|_{L^\infty} +\|p(v)\|_{L^\infty}+\|\widetilde v_i\|_{L^\infty}+\|v\|_{L^\infty}\Big)  (\|(\widetilde h_i)_x\|_{L^1} + \|(\widetilde v_i)_x\|_{L^1} ) \le C,
\end{aligned}
	\end{align}
where the constant $C$ is independent of $t$.
Likewise, we have
\[
\sup_{X_1,X_2\in\bbr} |\nabla_{X_1,X_2} F(t,X_1,X_2)|\le C.
\]
Therefore,  \eqref{X(t)}  has a unique absolutely continuous solution by a simple adaptation of Cauchy-Lipschitz theorem (for example, by  \cite[Lemma A.1]{CKKV} and  \cite[Lemma C.1]{KV-2shock}).  \\
Since $|\dot{X_1}(t)| + |\dot{X_2}(t)| \le C$ by \eqref{temco}, we have \eqref{roughx}.
\end{proof}

\subsection{Main proposition for a priori estimates}

Based on the previous setting, we present the main proposition for a priori estimate.

\begin{proposition}\label{prop:H1-estimate}
	For a given constant $U_+:=(v_+,u_+)\in\bbr_+\times\bbr$, there exist positive constants $\delta_0$, $C_0$, and $\e_1$ such that the following holds: \\ 
	For any constant states $U_m:=(v_m,u_m)\in S_2(v_+,u_+)$ and $U_+:=(v_-,u_-)\in S_1(v_m,u_m)$ satisfying
	$|p(v_-)-p(v_m)|=:\delta_1<\delta_0$ and $|p(v_m)-p(v_+)|=:\delta_2<\delta_0$, let $(\tv^{X_1,X_2},\tu^{X_1,X_2})$ denote the composite wave of two shifted shocks as in \eqref{composite_wave}, where $(X_1,X_2)$ solves \eqref{X(t)}. 
	Suppose that $(v,u)$ is the solution to $\eqref{eq:NS}$ on $[0,T]$ for some $T>0$,  and satisfy  
		\begin{align*}
		&v-\tv^{X_1,X_2} \in C([0,T];H^1(\bbr)),\\
		&u-\tu^{X_1,X_2} \in C([0,T];H^1(\bbr))\cap L^2(0,T;H^2(\bbr)),
	\end{align*} 
	and
	\begin{equation}\label{perturbation_small}
		\|v-\tv^{X_1,X_2}\|_{L^\infty(0,T;H^1(\bbr))} + \|u-\tu^{X_1,X_2}\|_{L^\infty(0,T;H^1(\bbr))}\le \e_1.
	\end{equation}
	Then, for all $t\in [0,T]$,
	
	\begin{align}
		\begin{aligned}\label{C-3}
			\sup_{t \in [0,T]}&\left(\|v-\tv^{X_1,X_2}\|_{H^1(\bbr)}+\|u-\tu^{X_1,X_2}\|_{H^1(\bbr)}\right)+\sqrt{\int_0^t \sum_{i=1}^2\delta_i |X_i|^2\,ds}\\
			&\quad +\sqrt{\int_0^t \left(  \mathcal{G}^S(U)+{D}(U)+D_1(U)+D_2(U) \right)\,ds}\\
			&\le C_0\left(\|v_0-\tv(0,\cdot)\|_{H^1(\R)}+\|u_0-\tu(0,\cdot)\|_{H^1(\R)}\right)+C_0 \delta_0^{1/4}.
		\end{aligned}
	\end{align}
	In particular,
\beq\label{bddx12}
|\dot{X}_1(t)| + |\dot{X}_2(t)|\le C_0 \|(v-\tv^{X_1,X_2})(t,\cdot)\|_{L^\infty(\bbr)},\qquad t\le T.
\eeq
	Here, the constant $C_0$  is independent of $T$ and
	\begin{equation} \label{tempo}
	\begin{aligned}
	\mathcal{G}^S(U)&:=\sum_{i=1}^2\int_\R |(\tv_i)^{X_i}_x| |\phi_i(v-\tv^{X_1,X_2})|^2 \, dx,\\
	D(U)&:=\int_\R | \partial_x (p(v)-p(\tv^{X_1,X_2}))|^2 \,dx,\\
	D_1(U)&:=\int_\R |(u-\tu^{X_1,X_2})_x|^2 \, dx,\\
	D_2(U)&:=\int_\R |(u-\tu^{X_1,X_2})_{xx}|^2 \, dx,
	\end{aligned}
	\end{equation}
where $\phi_1, \phi_2$ are {\it cutoff} functions  defined by
\begin{equation}\label{localization}
	\phi_1(t,x):=\begin{cases}
	1&\quad\mbox{if}\quad x<\frac{X_1(t)+\sigma_1t}{2},\\
	0&\quad\mbox{if}\quad x>\frac{X_2(t)+\sigma_2t}{2},\\
	\mbox{linearly decreasing $1$ to $0$} &\quad\mbox{if}\quad \frac{X_1+\sigma_1 t}{2}\le x\le \frac{X_2+\sigma_2t}{2},
\end{cases}\quad \phi_2(t,x):=1-\phi_1(t,x).
\end{equation}	
\end{proposition}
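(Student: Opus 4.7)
\medskip

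The plan is to combine the method of $a$-contraction with shifts, adapted to a composite wave, with standard higher-order energy estimates. The starting point is the weighted relative entropy identity
\[
\frac{d}{dt}\int_\R a^{X_1,X_2}\,\eta(U\,|\,\widetilde U^{X_1,X_2})\,dx = \dot X_1\, Y_1 + \dot X_2\, Y_2 + \mathcal{J}^{\text{bad}}(U)-\mathcal{J}^{\text{good}}(U) + \mathcal{E}_{\mathrm{int}},
\]
where the last term collects the error produced by the fact that the composite wave $(\widetilde v^{X_1,X_2},\widetilde u^{X_1,X_2})$ is not an exact solution of \eqref{eq:NS}: upon plugging it into the system, the residual is a bilinear expression in $(\widetilde v_1^{X_1})',(\widetilde u_1^{X_1})'$ and $(\widetilde v_2^{X_2})',(\widetilde u_2^{X_2})'$. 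The first task is to derive this identity and carefully record the good term $-\sigma_i\int a'\eta(U|\widetilde U)$ (coming from the weight function) as well as the parabolic dissipation controlling $D_1(U)$ and $D(U)$.

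Next I would choose the weight as a sum $a^{X_1,X_2}=1+\sum_{i=1}^2\lambda_i(\widetilde v_i^{X_i}-v_m)/\delta_i$ with suitable small $\lambda_i>0$ so that $\sigma_i a_x$ is positive and localizes the perturbation near the $i$-th shock. The interaction error $\mathcal{E}_{\mathrm{int}}$ is then handled via the separation bound \eqref{sx12}: since the two shifts satisfy $X_1(t)+\sigma_1 t\le \sigma_1 t/2<0<\sigma_2 t/2\le X_2(t)+\sigma_2 t$, the product $|(\widetilde v_1^{X_1})'|\,|(\widetilde v_2^{X_2})'|$ is pointwise bounded by $C\delta_1\delta_2 e^{-c(\delta_1+\delta_2)t}$ in the central region and is integrable in time. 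This separation must itself be verified a posteriori from \eqref{bddx12} and the smallness \eqref{perturbation_small}, which is where the bootstrap in $\delta_0$ enters.

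With the error terms under control, the shift ODEs \eqref{X(t)} are designed precisely so that $\dot X_i Y_i$ cancels the worst linear-in-$(v-\widetilde v)$ bad terms localized by $(\widetilde v_i)'_x$ and simultaneously produces a perfect-square $\sum_i \delta_i|\dot X_i|^2$ contribution, thanks to the specific choice $M=\tfrac{5(\gamma+1)}{8\gamma p(v_m)}(-p'(v_m))^{3/2}$. The remaining bad terms are quadratic in $u-\widetilde u$ and localized by $(\widetilde v_i^{X_i})'$; using Lemma \ref{lem-rel-quant} to rewrite everything in the pressure variable and then the Poincar\'e-type inequality \eqref{poincare} (applied on each interval where $\widetilde v_i$ transitions), they are absorbed by a small fraction of $\mathcal{G}^S(U)$ and of the localized part of $D_1(U)$. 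This yields the $L^2$-type bound together with the dissipation $\int_0^t(\mathcal{G}^S+D_1)\,ds$ and the shift bound $\int_0^t\sum_i\delta_i|\dot X_i|^2\,ds$. The estimate \eqref{bddx12} follows directly from \eqref{X(t)}: after integration by parts in the definition of $\dot X_i$ and using $|(p(v)-p(\widetilde v^{X_1,X_2}))|\lesssim \|v-\widetilde v^{X_1,X_2}\|_{L^\infty}$ on the support of $(\widetilde v_i)_x$, one obtains $|\dot X_i|\le C\|v-\widetilde v^{X_1,X_2}\|_{L^\infty}$ uniformly in $\delta_i$.

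Finally, to upgrade to $H^1$ I would differentiate the equations in $x$, test against $(p(v)-p(\widetilde v^{X_1,X_2}))_x$ and $(u-\widetilde u^{X_1,X_2})_x$ respectively, and combine. The $v$-derivative bound is closed by using the first equation of \eqref{eq:NS} to express $v_t$ in terms of $u_x$ and exploiting the effective viscous flux $(u_x/v - p(v))$, which produces the $D(U)$ term; the $u$-derivative bound is obtained directly from the second equation and generates $D_2(U)$. All cross-terms involve either interaction residuals (handled as above) or cubic nonlinearities bounded by $\e_1$ times a quadratic form, hence absorbable under \eqref{perturbation_small}. A Gronwall argument together with the smallness $\e_1+\delta_0\ll 1$ closes the estimate \eqref{C-3}, where the extra $\delta_0^{1/4}$ on the right-hand side accounts for the integrated interaction errors.

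The main obstacle I expect is the interplay between the two shocks of independent strength. Unlike \cite{HM}, the strengths $\delta_1,\delta_2$ cannot be used interchangeably, and every bad term that is localized near one shock must be absorbed by dissipation coming from the \emph{same} shock (so the associated $\dot X_i$-square). The key mechanism is the well-separation property \eqref{sx12} and the introduction of the cutoff $\phi_1,\phi_2$, which splits the non-localized dissipation $D_1(U)$ across the two regions. Making the various thresholds compatible (i.e.\ choosing $\lambda_i$, $M$, and the small parameters in the Young inequalities so that nothing is lost in either shock's estimate) is the delicate bookkeeping step of the whole argument.
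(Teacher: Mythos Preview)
Your overall plan—weighted relative entropy with shifts, interaction errors via separation, Poincar\'e for the leading bad terms, then an $H^1$ upgrade—matches the paper's. But the core mechanism in the middle is misidentified in a way that would not close with the shift ODE \eqref{X(t)} as given.

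You say the remaining bad terms after the shift step are ``quadratic in $u-\widetilde u$'' and are absorbed by Poincar\'e into ``the localized part of $D_1(U)$''. This does not fit: the shift \eqref{X(t)} is built from $p(v)-p(\widetilde v)$ and $v-\widetilde v$, so the good term $-\tfrac{\delta_i}{M}|\dot X_i|^2$ dominates the square of the \emph{average} of $w_i:=\phi_i\big(p(v)-p(\widetilde v)\big)$, not of $u-\widetilde u$. For Poincar\'e to then absorb the leading bad terms (which, after maximization, are $\int(a_i)_x|p(v)-p(\widetilde v)|^2$ and $\int(\widetilde v_i)_x\,p(v|\widetilde v)$), one needs dissipation of $\partial_x(p(v)-p(\widetilde v))$, i.e.\ $D(U)$—not $D_1(U)$. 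In the $(v,u)$ relative entropy the only parabolic good term is $D_1(U)$; there is no $D(U)$ to spend. The paper gets around this by first passing to the effective velocity $h:=u-(\ln v)_x$: in the $(v,h)$ system the diffusion sits on the $v$-equation, the relative-entropy identity produces $\mathcal D\sim D(U)$, the maximization is carried out on $h-\widetilde h$ (leaving bad terms quadratic in $p(v)-p(\widetilde v)$), and Poincar\'e closes. Only afterwards does the paper return to $(v,u)$ to harvest $D_1$, bounding the new bad terms by the already-established $\mathcal G^S$, $G_1$, $D$ from the $(v,h)$ step. Your sketch collapses these two layers and ends up with incompatible variables.

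Two smaller points. Localizing the dissipation by $\phi_i$ costs a term of size $\int|\partial_x\phi_i|^2|p(v)-p(\widetilde v)|^2\,dx$, and since $|\partial_x\phi_i|\le C/t$ this is bounded by $Ct^{-2}\int\eta(U|\widetilde U)\,dx$, which is not integrable at $t=0$; the paper handles $t\in[0,1]$ by a separate crude bound before applying Gr\"onwall on $[1,T]$, and your proposal does not address this. Finally, your $H^1$ route via the effective viscous flux is a legitimate alternative, but the paper's is shorter: once $\|h-\widetilde h\|_{L^2}$ and $\|u-\widetilde u\|_{L^2}$ are controlled, the identity $(\ln v)_x=u-h$ gives $\|(v-\widetilde v)_x\|_{L^2}$ algebraically, and then testing the $u$-equation against $-(u-\widetilde u)_{xx}$ yields $D_2$.
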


\begin{remark}\label{rem-phi}
Since the (shifted) composite wave $(\tv^{X_1,X_2}, \tu^{X_1,X_2})$ is not a solution to the Navier-Stokes equation as in \eqref{com-eqn}, we need to control the interaction term of the 1- and 2-waves to get the desired results of Proposition \ref{prop:H1-estimate}. For that, we localize the perturbation near each wave by using the above cutoff functions. More specifically, $\phi_1$ (resp. $\phi_2$) localizes the perturbation near the 1-wave (resp. 2-wave) shifted by $X_1$ (resp. $X_2$). Notice from  \eqref{sepx12} that 
\[
 X_2(t) +\sigma_2t \ge \frac{\sigma_2}{2}t >0> \frac{\sigma_1}{2}t\ge  X_1(t) +\sigma_1t,\qquad t>0,
\]
and so the functions $\phi_1$ and $\phi_2$ are well-separated as time goes.   
\end{remark}

\subsection{Global-in-time existence for perturbation} \label{subsec:global}

We here prove the global existence of solution by using the continuation argument based on Proposition \ref{prop:local-H1} and Proposition \ref{prop:H1-estimate}. We first assume that the constants $\delta_0,C$, and $\e_1$ are given by Proposition \ref{prop:H1-estimate}, and consider the end states $(v_-,u_-)$ and $(v_+,u_+)$ satisfying the conditions in Proposition \ref{prop:H1-estimate}. First of all, the smooth monotone functions $\underline{v}(x)$ and $\underline{u}(x)$ given in Proposition \ref{prop:local-H1} satisfy
\begin{align}
\begin{aligned}\label{monotone-est}
	\sum_{\pm}&\left(\|\underline{v}-v_{\pm}\|_{L^2(\bbr_\pm)}+\|\underline{u}-u_\pm\|_{L^2(\bbr_\pm)}\right)+\|\pa_x\underline{v}\|_{L^2(\bbr)}+\|\pa_x\underline{u}\|_{L^2(\bbr)}\\
	&\le C(|v_+-v_-|+|u_+-u_-|)\le C_1(\delta_1+\delta_2).
\end{aligned}
\end{align}
Then, we estimate the $H^1$-perturbation between the monotone functions $\underline{v}, \underline{u}$ and the initial composite waves $\tv(0,\cdot),\tu(0,\cdot)$ in \eqref{com2} as

\begin{align}
	\begin{aligned}\label{H1-perterb-1}
	&\|\underline{v}(\cdot)-\tv(0,\cdot)\|_{H^1(\R)}+\|\underline{u}(\cdot)-\tu(0,\cdot)\|_{H^1(\R)}\\
	&\le \|\underline{v}-\tv(0,\cdot)\|_{L^2(\R)}+\|\pa_x(\underline{v}-\tv(0,\cdot))\|_{L^2(\R)}+\|\underline{u}-\tu(0,\cdot)\|_{L^2(\R)}+\|\pa_x(\underline{u}-\tu(0,\cdot))\|_{L^2(\R)}\\
	&\le \sum_{\pm}\left(\|\underline{v}-v_\pm\|_{L^2(\R_\pm)}+\|\underline{u}-u_\pm\|_{L^2(\R_\pm)}\right)\\
	&\quad + \|\tv_1-v_m\|_{L^2(\R_+)} + \|\tv_1-v_-\|_{L^2(\R_-)} +\|\tv_2-v_+\|_{L^2(\R_+)}+\|\tv_2-v_m\|_{L^2(\R_-)}\\
	&\quad + \|\pa_x\underline{v}\|_{L^2(\R)} + \|\pa_x \tv_1\|_{L^2(\R)}+\|\pa_x\tv_2\|_{L^2(\R)}\\
	&\quad + \|\tu_1-u_m\|_{L^2(\R_+)} + \|\tu_1-u_-\|_{L^2(\R_-)} +\|\tu_2-u_+\|_{L^2(\R_+)}+\|\tu_2-u_m\|_{L^2(\R_-)}\\
	&\quad + \|\pa_x\underline{u}\|_{L^2(\R)} + \|\pa_x \tu_1\|_{L^2(\R)}+\|\pa_x\tu_2\|_{L^2(\R)}\\
	&\le C_2(\sqrt{\delta_1}+\sqrt{\delta_2}),
	\end{aligned}
\end{align}
where we used the shock estimates \eqref{shock-est} and \eqref{monotone-est} in the last inequality. Now, for the constant $\e_1$, we choose $\delta_0$ small enough so that the following condition holds for any $\delta_1, \delta_2<\delta_0$:
\[\frac{\frac{\e_1}{4}-C_0\delta_0^{1/4}}{C_0+1}-C_1(\delta_1+\delta_2)-C_2(\sqrt{\delta_1}+\sqrt{\delta_2})\ge 0.
\]
Consider the two positive constants $\e_*$ and $\e_0$ defined as
\[
\e_*:=\frac{\frac{\e_1}{2}-C_0\delta_0^{1/4}}{C_0+1}-C_2(\sqrt{\delta_1}+\sqrt{\delta_2}),\qquad
\e_0:=  \frac{\e_1}{4(C_0+1)}.
\]
We now consider any initial data $(v_0,u_0)$ satisfying
\[\sum_\pm (\|v_0-v_\pm\|_{L^2(\R_\pm)}+\|u_0-u_\pm\|_{L^2(\R_\pm)})+\|\pa_xv_0\|_{L^2(\R)} +\|\pa_xu_0\|_{L^2(\R)}<\e_0,\]
which implies that the initial $H^1$-perturbation from the monotone functions $\underline{v},\underline{u}$ is small:
\begin{equation}\label{H1-perturb-initial}
	\|v_0-\underline{v}\|_{H^1(\R)}+\|u_0-\underline{u}\|_{H^1(\R)}\le \e_0+C_1(\delta_1+\delta_2)\le \e_*.
\end{equation}
In particular, Sobolev embedding implies $\|v_0-\underline{v}\|_{L^\infty(\R)}\le C\e_*$, and therefore,
\[\frac{v_-}{2}<v_0(x)<2v_+,\quad \forall x\in \bbr.\]
Since $0<\e_*<\frac{\e_1}{2}$, we use Proposition \ref{prop:local-H1} to guarantee the local existence of the unique solution $(v,u)$ on $[0,T_0]$ satisfying
\begin{equation}\label{C-6}
	\|v-\underline{v}\|_{L^\infty(0,T_0;H^1(\R))}+\|u-\underline{u}\|_{L^\infty(0,T_0;H^1(\R))}\le \frac{\e_1}{2},
\end{equation}
and 
\[\frac{v_-}{3}<v(t,x)<3v_+,\quad \forall (t,x)\in[0,T_0]\times\R.\]
We now use a similar argument as in \eqref{H1-perterb-1} to obtain 
\begin{align*}
	&\|\underline{v}-\tv^{X_1,X_2}\|_{H^1(\R)}+\|\underline{u}-\tu^{X_1,X_2}\|_{H^1(\R)}\\
	&\le \sum_{\pm}\left(\|\underline{v}-v_\pm\|_{L^2(\R_\pm)}+\|\underline{u}-u_\pm\|_{L^2(\R_\pm)}\right)\\
	&\quad + \|\tv_1^{X_1}-v_m\|_{L^2(\R_+)} + \|\tv_1^{X_1}-v_-\|_{L^2(\R_-)} +\|\tv_2^{X_2}-v_+\|_{L^2(\R_+)}+\|\tv_2^{X_2}-v_m\|_{L^2(\R_-)}\\
	&\quad + \|\pa_x\underline{v}\|_{L^2(\R)} + \|\pa_x \tv_1^{X_1}\|_{L^2(\R)}+\|\pa_x\tv_2^{X_2}\|_{L^2(\R)}\\
	&\quad + \|\tu_1^{X_1}-u_m\|_{L^2(\R_+)} + \|\tu_1^{X_1}-u_-\|_{L^2(\R_-)} +\|\tu_2^{X_2}-u_+\|_{L^2(\R_+)}+\|\tu_2^{X_2}-u_m\|_{L^2(\R_-)}\\
	&\quad + \|\pa_x\underline{u}\|_{L^2(\R)} + \|\pa_x \tu_1^{X_1}\|_{L^2(\R)}+\|\pa_x\tu_2^{X_2}\|_{L^2(\R)}\\
	&\le C\sqrt{\delta_1}(1+\sqrt{X_1(t)})+ C\sqrt{\delta_2}(1+\sqrt{X_2(t)}),
\end{align*}
where we used
\begin{align*}
	\int_0^\infty& |\tv_1(x-\sigma_1t-X_1(t))-v_m|^2\,d x\\
	&=\int_0^\infty |\tv_1(x-\sigma_1t)-v_m|^2\, d x+\int_{-X_1(t)}^0 |\tv_1(x-\sigma_1t)-v_m|^2\,d x\\
	&\le C\delta_1(1+|X_1(t)|),
\end{align*}
and a similar estimates for $\tv_2$. Then, using Lemma \ref{lem:xex}, we have
\[
\|\underline{v}-\tv^{X_1,X_2}\|_{H^1(\R)}+\|\underline{u}-\tu^{X_1,X_2}\|_{H^1(\R)} \le C\sqrt{\delta_0}(1+\sqrt{t}).
\]
 Therefore, taking  $0<T_1<T_0$ small enough such that $C\sqrt{\delta_0}(1+\sqrt{T_1})<\frac{\e_1}{2}$, we obtain
\begin{equation}\label{C-7}
	\|\underline{v}-\tv^{X_1,X_2}\|_{L^\infty(0,T_1;H^1(\R))}+\|\underline{u}-\tu^{X_1,X_2}\|_{L^\infty(0,T_1;H^1(\R))}\le \frac{\e_1}{2}.
\end{equation}
We now combine \eqref{C-6} and \eqref{C-7} to obtain the following $H^1$-perturbation estimate
\begin{equation}\label{C-8}
	\|v-\tv^{X_1,X_2}\|_{L^\infty(0,T_1;H^1(\R))}+\|u-\tu^{X_1,X_2}\|_{L^\infty(0,T_1;H^1(\R))}<\e_1.
\end{equation}
In particular, since the shifts $X_i(t)$ are absolutely continuous, we have 
\[v-\tv^{X_1,X_2},u-\tu^{X_1,X_2}\in C([0,T_1];H^1(\R)).\]
In order to extend the solution globally-in-time, we consider the maximal time $T_*$ defined as 
\[T_*:=\sup\left\{t>0~\Bigg|~\sup_{[0,t]}\left(\|v-\tv^{X_1,X_2}\|_{H^1(\R)}+\|u-\tu^{X_1,X_2}\|_{H^1(\R)}\right)\le \e_1\right\}.\]
Suppose $T_*<+\infty$. Then, by the continuation argument,
\begin{equation}\label{C-9}
	\sup_{[0,T_*]}\left(\|v-\tv^{X_1,X_2}\|_{H^1(\R)}+\|u-\tu^{X_1,X_2}\|_{H^1(\R)}\right)= \e_1.
\end{equation}
However, by \eqref{H1-perterb-1} and \eqref{H1-perturb-initial},
\[
\| v_0 -\tv(0,\cdot)\|_{H^1(\R)}+\| u_0 -\tu(0,\cdot)\|_{H^1(\R)} \le \frac{\frac{\e_1}{2}-C_0\delta_0^{1/4}}{C_0+1},
\]
which together with Proposition \ref{prop:H1-estimate}, implies that
\begin{align*}
	\sup_{[0,T_*]}&\left(\|v-\tv^{X_1,X_2}\|_{H^1(\R)}+\|u-\tu^{X_1,X_2}\|_{H^1(\R)}\right)\\
	&\le C_0\frac{\frac{\e_1}{2}-C_0\delta_0^{1/4}}{C_0+1} +C_0\delta_0^{1/4} \\
	&\le \left(\frac{\e_1}{2}-C_0\delta_0^{1/4} \right)+C_0\delta_0^{1/4}= \frac{\e_1}{2}.
\end{align*}
This contradicts to \eqref{C-9} and therefore, we conclude that $T_*=+\infty$. Then, it again follows from Proposition \ref{prop:H1-estimate} that
\begin{equation} \label{tempo1}
\begin{aligned}
\sup_{t >0} &\left( \norm{v-\tv^{X_1,X_2}}_{H^1(\R)}+\norm{u-\tu^{X_1,X_2}}_{H^1(\R)} \right)+\sqrt{\int_0^t \sum_{i=1}^2\delta_i |X_i|^2\,ds}\\
	&\quad +\sqrt{\int_0^\infty \left(  \mathcal{G}^S(U)+D(U)+D_1(U)+D_2(U) \right)\,ds}\\
	&\le C_0\left(\|v_0-\tv(0,\cdot)\|_{H^1(\R)}+\|u_0-\tu(0,\cdot)\|_{H^1(\R)}\right)+C_0\delta_0^{1/4},
\end{aligned}
\end{equation}
and
\beq\label{conx12}
|\dot{X}_1(t)| + |\dot{X}_2(t)|\le C_0 \|(v-\tv^{X_1,X_2})(t,\cdot)\|_{L^\infty(\bbr)},\qquad t>0.
\eeq

Especially,  since the right-hand side of \eqref{tempo1} is small enough,  we find that
\begin{equation} \label{tempo2}
\frac{v_-}{3} < v(t,x) < 3v_+,  \quad \forall(t,x) \in [0,\infty) \times \R.
\end{equation}

\subsection{Time-asymptotic behavior}\label{subsec:time-asymptotic}
Based on the global estimates \eqref{tempo1}-\eqref{tempo2}, we will show the time-asymptotic behavior in Theorem \ref{thm:main}. 
%Since the proof uses the typical argument, we provide it in Appendix \ref{app:behavior}. 
The proof mainly follows the typical argument, but we will crucially use the following two lemmas based on the wave separation by shifts, since the energy functional $\mathcal{G}^S$ is localized by the cutoff functions $\phi_i$.\\
First, the estimate \eqref{tempo1} with the Sobolev embedding implies
\[
\|v-\tv^{X_1,X_2}\|_{L^\infty(\bbr_+\times\bbr)} \le C(\eps_0+\delta_0^{1/4}).
\]
Thus, \eqref{conx12} and the smallness of $\delta_0$ and $ \eps_0$ imply 
\[
X_1(t)\le -\frac{\sigma_1}{2}t,\quad X_2(t)\ge -\frac{\sigma_2}{2}t,\quad t>0,
\] 
or equivalently,
\beq\label{sepx12}
X_1(t)+\sigma_1t \le \frac{\sigma_1}{2}t,\quad X_2(t) +\sigma_2t \ge \frac{\sigma_2}{2}t,\quad t>0,
\eeq
which proves \eqref{sx12}.
Thus, the waves $\widetilde v_1^{X_1}$ and $\widetilde v_2^{X_2}$ are well-separated, and so are $\widetilde v_1^{X_1}$ and $\phi_2$. Using this property, we have the following lemmas on the wave separation.

\begin{lemma}\label{lem:shock-interact-1}
Assume \eqref{sepx12}.	Given $v_+>0$, there exist positive constants $\delta_0, C$ such that for any $\delta_1, \delta_2\in(0,\delta_0)$, the following estimates hold. For each $i=1, 2$,
	\begin{align*}
	& |(\tv_i)^{X_i}_x||\tv^{X_1,X_2}-\tv^{X_i}_i| \le C \delta_i \delta_1\delta_2\exp\left(-C\min\{\delta_1,\delta_2\}t\right),\quad t>0, \quad x\in\bbr,\\
		&\int_{\bbr}|(\tv_i)^{X_i}_x||\tv^{X_1,X_2}-\tv^{X_i}_i|\,dx\le C\delta_1\delta_2\exp\left(-C\min\{\delta_1,\delta_2\}t\right),\quad t>0,\\
		&\int_{\bbr}|(\tv_1)^{X_1}_x||(\tv_2)^{X_2}_x|\,dx\le C\delta_1\delta_2\exp\left(-C\min\{\delta_1,\delta_2\}t\right),\quad t>0.
	\end{align*}
\end{lemma}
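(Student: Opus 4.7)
The plan is to exploit the temporal separation of the two shock centers forced by \eqref{sepx12}. Setting $c_i(t) := \sigma_i t + X_i(t)$ and $y_i := x - c_i(t)$, the inequality \eqref{sepx12} yields the gap
\[
c_2(t) - c_1(t) \ge \frac{\sigma_2 - \sigma_1}{2}\,t =: \alpha t, \qquad t > 0.
\]
I would decompose $\R = A_t \sqcup B_t$ with
\[
A_t := \{x : y_1 \ge \alpha t/2\}, \qquad B_t := \{x : y_1 < \alpha t/2\},
\]
noting that on $B_t$ one automatically has $y_2 = y_1 - (c_2 - c_1) < -\alpha t/2$. Applying Lemma~\ref{lem:shock-est} at $y_1 \ge \alpha t/2 > 0$ gives uniform temporal decay for $|\tv_1'(y_1)|$ and for $|\tv_1(y_1) - v_m|$ on $A_t$; applying it at $-y_2 \ge \alpha t/2$ gives uniform temporal decay for $|\tv_2(y_2) - v_m|$ and $|\tv_2'(y_2)|$ on $B_t$.

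The next step uses the algebraic identity $\tv^{X_1,X_2} - \tv_i^{X_i} = \tv_j^{X_j} - v_m$ with $j = 3-i$, which reduces the first two inequalities to estimating $|\tv_i'(y_i)|\,|\tv_j(y_j) - v_m|$. For the pointwise estimate with $i = 1$, on $A_t$ I combine the uniform temporal decay of $|\tv_1'|$ with the crude bound $|\tv_2 - v_m| \le |v_+ - v_m| \le C\delta_2$ (coming from $\delta_2 = |p(v_+) - p(v_m)|$ and the mean value theorem); on $B_t$ I combine the uniform temporal decay of $|\tv_2 - v_m|$ with $|\tv_1'| \le C\delta_1^2$. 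Both contributions are bounded by $C\delta_1^2 \delta_2 e^{-C \min\{\delta_1,\delta_2\}\,t}$, as required, and the case $i = 2$ follows by symmetry.

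For the two integral bounds I use the same decomposition. On the subregion where one factor carries the uniform temporal decay, I pull that factor out and bound the remaining one globally via the standard identity $\int_{\R} |\tv_i'(y)|\,dy = |v_m - v_\mp| \le C\delta_i$. On the complementary subregion, the integration is restricted to a shock tail, which by \eqref{shock-est} satisfies $\int_{y_1 \ge \alpha t/2} |\tv_1'(y_1)|\,dx = |\tv_1(\alpha t/2) - v_m| \le C\delta_1 e^{-C\delta_1 \alpha t/2}$, and analogously for the 2-shock tail. Summing the two regional contributions produces $C\delta_1 \delta_2 e^{-C\min\{\delta_1,\delta_2\}t}$ for the second inequality. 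The third inequality follows from exactly the same scheme: a brief computation yields $C(\delta_1^2 \delta_2 e^{-C\delta_1 t} + \delta_1 \delta_2^2 e^{-C\delta_2 t})$, after which the extra prefactor $\delta_1 + \delta_2 \le 2\delta_0$ is absorbed into $C$.

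The principal subtlety is that $\tv_j - v_m$ does not decay as $y_j \to +\infty$ (there it tends to $v_\pm \ne v_m$), so one cannot simply integrate a spatial-exponential factor over all of $\R$. The asymmetric decomposition above circumvents this: whenever $y_j > 0$, the point is forced into the opposing region in which the other shock's derivative is itself exponentially small in $t$, and the crude bound $|\tv_j - v_m| \le C\delta_j$ is then enough to close the estimate.
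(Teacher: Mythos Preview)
Your proposal is correct and follows the same core strategy as the paper: split $\R$ into two regions using the separation \eqref{sepx12}, and in each region exploit the exponential decay of one of the two factors. The tactical details differ slightly. The paper splits at the fixed point $x=0$ (which lies between the two shifted shock centers by \eqref{sepx12}), whereas you split at the moving midpoint $y_1=\alpha t/2$; both work equally well. For the integral bounds, the paper uses a square-root trick, writing $|(\tv_1)^{X_1}_x|=|(\tv_1)^{X_1}_x|^{1/2}|(\tv_1)^{X_1}_x|^{1/2}$, bounding one copy pointwise by the temporal decay, and invoking $\int_\R |(\tv_1)^{X_1}_x|^{1/2}\,dx\le C$. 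Your approach---pull out the uniformly decaying factor on one region and use the tail integral $\int_{y_1\ge \alpha t/2}|\tv_1'|\,dy_1=|\tv_1(\alpha t/2)-v_m|$ on the other---is more direct and avoids the half-power device entirely. Either route yields the stated bounds with no difficulty.
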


\begin{lemma}\label{lem:shock-interact-2}
Assume \eqref{sepx12}.	Let $\phi_i$ be the functions defined in \eqref{localization}. Given $v_+>0$, there exist positive constants $\delta_0, C$ such that for any $\delta_1, \delta_2\in(0,\delta_0)$, the following estimates hold.
	\begin{align*}
		&\phi_2 |(\widetilde{v}_1)^{X_1}_x| \leq C \delta_1^2 \exp(-C \delta_1 t),\quad  \phi_1 |(\widetilde{v}_2)^{X_2}_x| \leq C \delta_2^2 \exp(-C \delta_2 t),\quad  t>0,\quad x\in\bbr,\\
		&\int_{\bbr}|(\tv_1)_x^{X_1}|\phi_2\,d x\le C\delta_1\exp(-C\delta_1t), \quad \int_{\bbr}|(\tv_2)_x^{X_2}|\phi_1\,d x\le C\delta_2\exp(-C\delta_2t),\quad  t>0.\\
	\end{align*}
\end{lemma}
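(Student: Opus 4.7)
The plan is to deduce both estimates from the wave separation \eqref{sepx12} combined with the exponential decay of $\tv_i'$ recorded in Lemma \ref{lem:shock-est}. First I would note that since $\sigma_1<0<\sigma_2$, \eqref{sepx12} forces $X_1(t)+\sigma_1 t\le \sigma_1 t/2 <0$ and $X_2(t)+\sigma_2 t\ge \sigma_2 t/2 >0$, so the two shifted wave centers lie on opposite sides of the origin and the gap between them grows linearly in $t$. Next I would read off from \eqref{localization} that $\phi_2$ is supported in $\{x\ge (X_1+\sigma_1 t)/2\}$, and compute that for any $x$ in this support,
\[
x-(X_1(t)+\sigma_1 t)\;\ge\;-\tfrac{1}{2}\bigl(X_1(t)+\sigma_1 t\bigr)\;\ge\;\tfrac{|\sigma_1|}{4}\,t;
\]
symmetrically, on $\mathrm{supp}\,\phi_1$ one has $(X_2(t)+\sigma_2 t)-x\ge \sigma_2 t/4$.

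With this linear-in-$t$ lower bound on $|x-\sigma_i t-X_i(t)|$ throughout the support of the opposite cutoff, the exponential decay estimate
$|\tv_i'(y)|\le C\delta_i^2 e^{-C\delta_i|y|}$ from \eqref{shock-est} immediately yields the pointwise bound
\[
\phi_2(t,x)\,|(\tv_1)^{X_1}_x(x)|\;\le\;C\delta_1^2\exp\!\bigl(-C\delta_1 t\bigr),
\]
and the analogous bound for $\phi_1\,|(\tv_2)^{X_2}_x|$. For the integral estimates I would restrict the integration to the support of the relevant cutoff and perform the translation $y=x-\sigma_1 t-X_1(t)$, obtaining
\[
\int_{\R}|(\tv_1)^{X_1}_x|\,\phi_2\,dx\;\le\;\int_{|\sigma_1|t/4}^{\infty}C\delta_1^2 e^{-C\delta_1 y}\,dy\;\le\;C\delta_1 e^{-C\delta_1 t},
\]
with the companion estimate for $\int|(\tv_2)^{X_2}_x|\phi_1\,dx$ handled identically using the symmetric support information.

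There is no real obstacle here: once the separation \eqref{sepx12} is in hand, the lemma is a direct consequence of the exponential profile of the viscous shocks. The one conceptual point worth flagging is that \eqref{sepx12} is an \emph{assumption} of the lemma, so it is logically compatible with the rest of the paper only because, at the stage in Section \ref{subsec:time-asymptotic} where the lemma is invoked, \eqref{sepx12} has already been established from the a priori bound \eqref{bddx12} together with the smallness of $\delta_0$ and $\varepsilon_0$.
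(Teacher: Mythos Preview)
Your proof is correct and follows essentially the same route as the paper's: identify the support of the opposite cutoff from \eqref{localization}, use \eqref{sepx12} to obtain the linear-in-$t$ lower bound on $|x-\sigma_i t-X_i(t)|$ there, and apply the exponential decay from Lemma \ref{lem:shock-est}. The only cosmetic difference is in the integral bound: the paper writes $|(\tv_1)^{X_1}_x|=|(\tv_1)^{X_1}_x|^{1/2}\cdot|(\tv_1)^{X_1}_x|^{1/2}$, bounds one factor pointwise on $\mathrm{supp}\,\phi_2$ and uses $\int_\bbr |(\tv_1)^{X_1}_x|^{1/2}\,dx\le C$ for the other, whereas you simply integrate the exponential tail directly---both are equally elementary and yield the same conclusion.
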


The proofs of the above lemmas are straightforward from the ones of \cite[Lemma 7.3]{KV-2shock} and \cite[Lemma 7.4]{KV-2shock} respectively. However, we present the proofs of the above lemmas in the appendices for readers' convenience. \\

To get the desired result, we follow the classical method as in \cite{KVW3}. But, we here present its details for readers.  Consider a function $g$ defined by
\[g(t):=\|(v-\tv^{X_1,X_2})_x\|_{L^2(\R)}^2+\|(u-\tu^{X_1,X_2})_{x}\|_{L^2(\R)}^{2}.\]
We will prove that $g\in W^{1,1}(\bbr_+)$, which then implies 
\[\lim_{t\to\infty} g(t)=0,\]
which together with the interpolation inequlaity and  \eqref{tempo1} yields
\begin{equation}\label{c-10}
\lim_{t\to\infty}(\|v-\tv^{X_1,X_2}\|_{L^\infty(\R)}+\|u-\tu^{X_1,X_2}\|_{L^\infty(\R)})=0.
\end{equation} 
Moreover,   \eqref{conx12} and \eqref{c-10} imply that
\[
\lim_{t \rightarrow +\infty} |\dot{X}_i(t)| \leq C_0 \Vert (v-\tv^{X_1,X_2})(t) \Vert_{L^\infty(\R)}=0,  \quad i=1,2.\]
Those give the desired result.
Hence, it remains to show that $g\in W^{1,1}(\bbr_+)$.\\
	Since
	\[
	\begin{aligned}
	(p(v)-p(\tv^{X_1,X_2}))_x&=p'(v)(v-\tv^{X_1,X_2})_x+(\tv_1^{X_1}+\tv_2^{X_2})_x(p'(v)-p'(\tv^{X_1,X_2})),
	\end{aligned}
	\]
	the uniform bound \eqref{tempo2} yields
	\begin{equation} \label{tempo3}
	|(v-\tv^{X_1,X_2})_x| \leq C|(p(v)-p(\tv^{X_1,X_2}))_x|+C\left( |(\tv_1)^{X_1}_x|+|(\tv_2)^{X_2}_x| \right) |v-\tv^{X_1,X_2}|.
	\end{equation}
	Using \eqref{tempo3} with $\phi_1+\phi_2=1$ (defined as in \eqref{localization}), and then Lemma \ref{lem:shock-interact-2} and \eqref{tempo1}, we have 
	\[
\begin{aligned}	
	\int_0^\infty |g(t)| \, dt 
	&\leq C\int_0^\infty \left(\int_\R |(\tv_1)^{X_1}_x| |\phi_1 (v-\tv^{X_1,X_2})|^2 \,dx +\int_\R |(\tv_2)^{X_2}_x| |\phi_2(v-\tv^{X_1,X_2})|^2 \,dx \right.\\
	&\qquad + \int_\R |(\tv_1)^{X_1}_x| |\phi_2 (v-\tv^{X_1,X_2})|^2 \,dx +\int_\R |(\tv_2)^{X_2}_x| |\phi_1(v-\tv^{X_1,X_2})|^2 \,dx \\
	&\qquad \left.   + \int_\R |\partial_x (p(v)-p(\tv^{X_1,X_2}))|^2 \,dx +\int_\R |(u-\tu^{X_1,X_2})_x|^2 \,dx \right) \,dt\\
	&\leq C \int_0^\infty ( \mathcal{G}^S (U)+{D}(U)+D_1(U))\,dt\\
	&\qquad  +C  \norm{v-\tv^{X_1,X_2}}_{L^\infty(0,\infty;L^2(\R))} \int_0^\infty  \int_{\bbr} (|(\tv_1)^{X_1}_x| \phi_2  +    |(\tv_2)^{X_2}_x| \phi_1)\,dxdt ,\\
	&< \infty.
	\end{aligned}\]
This implies $g\in L^1(\bbr_+)$. To show $g'\in L^1(\bbr_+)$, we will use the following system satisfied by the shifted composite wave :
\beq \label{com-eqn}
\begin{cases}
\tv^{X_1,X_2}_t-\tu^{X_1,X_2}_x=-\sum_{i=1}^2 \dot{X}_i (t) (\tv_i)^{X_i}_x\\
\tu^{X_1,X_2}_t+p(\tv^{X_1,X_2})_x=\left(\frac{\tu^{X_1,X_2}_x}{\tv^{X_1,X_2}}\right)_x-\sum_{i=1}^2 \dot{X_i}(\tu_i)^{X_i}_x+E_2+E_3,
\end{cases}
\eeq
where
\[E_2:=p(\tv^{X_1,X_2})_x-\sum_{i=1}^2 p(\tv_i^{X_i})_x, \quad E_3:=-\left( \frac{\tu^{X_1,X_2}_x}{\tv^{X_1,X_2}}\right)_x+\sum_{i=1}^2 \left( \frac{(\tu_i)^{X_i}_x}{\tv_i^{X_i}}\right)_x.\]
Then it follows from \eqref{eq:NS} and \eqref{com-eqn} that
	\[
	\begin{aligned}
	&(v-\tv^{X_1,X_2})_t-\sum_{i=1}^2 \dot{X}_i (\tv_i)^{X_i}_x -(u-\tu^{X_1,X_2})_x=0,\\
	&(u-\tu^{X_1,X_2})_t-\sum_{i=1}^2 \dot{X}_i (\tu_i)^{X_i}_x+(p(v)-p(\tv^{X_1,X_2}))_x=\left(\frac{u_x}{v}-\frac{\tu^{X_1,X_2}_x}{\tv^{X_1,X_2}}\right)_x-E_2-E_3.
	\end{aligned}
	\]
	This with integration by parts yields
	\[
	\begin{aligned}
	&\int_0^\infty |g'(t)| \,dt =\int_0^\infty 2 \left| \int_\bbr (v-\tv^{X_1,X_2})_x (v-\tv^{X_1,X_2})_{xt} \,dx+\int_\bbr (u-\tu^{X_1,X_2})_x(u-\tu^{X_1,X_2})_{xt} \,dx \right| \,dt\\
	&\leq 2\int_0^\infty \int_{\bbr} \left|  (v-\tv^{X_1,X_2})_x \left[ \sum_{i=1}^2 \dot{X}_i(t) (\tv_i)^{X_i}_{xx} +(u-\tu^{X_1,X_2})_{xx} \right] + (u-\tu^{X_1,X_2})_x \left[ \sum_{i=1}^2 \dot{X}_i(t)(\tu_i)^{X_i}_{xx} \right] \right| dxdt\\
	&\qquad +2 \int_0^\infty \int_{\bbr} \left| (u-\tu^{X_1,X_2})_{xx} \left[-(p(v)-p(\tv^{X_1,X_2}))_x+\left(\frac{u_x}{v}-\frac{\tu_x^{X_1,X_2}}{\tv^{X_1,X_2}}\right)_x-E_2-E_3 \right]  \right| dxdt\\
	&\leq C \int_0^\infty \left( \int_{\bbr} |(v-\tv^{X_1,X_2})_x|^2 +|(u-\tu^{X_1,X_2})_x|^2  + |(u-\tu^{X_1,X_2})_{xx}|^2 + |\partial_x(p(v)-p(\tv^{X_1,X_2}))|^2  dx \right.\\
	&\quad \left. +\int_{\bbr} \sum_{i=1}^2 |\dot{X}_i(t)|^2  \left[ |(\tv_i)^{X_i}_{xx}|^2 + |(\tu_i)^{X_i}_{xx}|^2 \right] dx + \int_{\bbr} \left[ \left| \left(\frac{u_x}{v}-\frac{\tu_x^{X_1,X_2}}{\tv^{X_1,X_2}}\right)_x \right|^2+|E_2|^2+|E_3|^2 \right] dx\right)dt\\
	\end{aligned}
	\]
Using the same estimates as before, \eqref{tempo1} and Lemma \ref{lem:shock-est}, we first have
\[
\begin{aligned}
\int_0^\infty |g'(t)| \,dt &\leq C \int_0^\infty \left( \sum_{i=1}^2 |\dot{X}_i(t)|^2 + \mathcal{G}^S (U)+{D}(U)+D_1(U)+D_2(U)\right)\,dt + C\\
&\qquad + \int_{\bbr} \left[ \left| \left(\frac{u_x}{v}-\frac{\tu_x^{X_1,X_2}}{\tv^{X_1,X_2}}\right)_x \right|^2+|E_2|^2+|E_3|^2 \right] dx dt
\end{aligned}
\]
For the last three terms above,  we get further estimate as follows:\\
	Using  \eqref{tempo2} and \eqref{tempo3} with Lemma \ref{lem:shock-est},  we get
	\[
	\begin{aligned}
	&\int_0^\infty \int_{\bbr} \left| \left(\frac{u_x}{v}-\frac{\tu_x^{X_1,X_2}}{\tv^{X_1,X_2}}\right)_x \right|^2 \, dxdt\\
	&=\int_0^\infty \int_{\bbr} \left| \frac{1}{v}(u-\tu^{X_1,X_2})_{xx}+(\tu^{X_1,X_2})_{xx} \left( \frac{1}{v}-\frac{1}{\tv^{X_1,X_2}} \right) -\frac{(u-\tu^{X_1,X_2})_x}{v^2}(v-\tv^{X_1,X_2})_x \right.\\
	&\qquad \left.  -\frac{\tu^{X_1,X_2}_x}{v^2}(v-\tv^{X_1,X_2})_x -\frac{\tv_x^{X_1,X_2}}{v^2}(u-\tu^{X_1,X_2})_x+(\tv^{X_1,X_2})_x(\tu^{X_1,X_2})_x \left( \frac{1}{(\tv^{X_1,X_2})^2}-\frac{1}{v^2}\right) \right|^2 \,dxdt\\
	&\leq C \int_0^\infty \int_{\bbr} \bigg[ |(u-\tu^{X_1,X_2})_{xx}|^2+(|(\tv_1)^{X_1}_x|^2+|(\tv_2)^{X_2}_x|^2)|v-\tv^{X_1,X_2}|^2+| \partial_x ( p(v)-p(\tv^{X_1,X_2})) |^2 \\
	&\qquad +|(u-\tu^{X_1,X_2})_x|^2 |(v-\tv^{X_1,X_2})_x|^2+|(u-\tu^{X_1,X_2})_x|^2 \bigg] \,dxdt.
		\end{aligned}
\]
Using \eqref{tempo1} with the same estimates as before, we have
\[
\begin{aligned}
	\int_0^\infty \int_{\bbr} \left| \left(\frac{u_x}{v}-\frac{\tu_x^{X_1,X_2}}{\tv^{X_1,X_2}}\right)_x \right|^2 \, dxdt
	&\leq C+ C \int_0^\infty ( \mathcal{G}^S (U)+ {D}(U)+D_1(U)+D_2(U))\,dt \\
	&\qquad  +C \int_0^\infty \norm{(u-\tu^{X_1,X_2})_x}_{L^\infty(\R)}^2 \int_{\bbr} |(v-\tv^{X_1,X_2})_x|^2 \,dxdt.\\
		\end{aligned}
\]		
The last term is estimated by using the interpolation inequality with \eqref{tempo1} as
	\[
	\begin{aligned}
	&\int_0^\infty \norm{(u-\tu^{X_1,X_2})_x}_{L^\infty(\R)}^2 \int_{\bbr} |(v-\tv^{X_1,X_2})_x|^2 \,dxdt\\
	&\leq C \int_0^\infty \norm{(u-\tu^{X_1,X_2})_x}_{L^2 (\R)} \norm{(u-\tu^{X_1,X_2})_{xx}}_{L^2 (\R)} \norm{(v-\tv^{X_1,X_2})_x}_{L^2(\R)}^2 \,dt\\
	&\leq C \int_0^\infty \left[ \norm{(u-\tu^{X_1,X_2})_x}_{L^2 (\R)}^2+ \norm{(u-\tu^{X_1,X_2})_{xx}}_{L^2 (\R)}^2 \norm{(v-\tv^{X_1,X_2})_x}_{L^2(\R)}^4 \right] \,dt\\
	&\leq C \int_0^\infty \left[ \norm{(u-\tu^{X_1,X_2})_x}_{L^2 (\R)}^2+ \norm{(u-\tu^{X_1,X_2})_{xx}}_{L^2 (\R)}^2  \right] \,dt\\
	&\leq C \int_0^\infty \left( D_1(U)+D_2(U) \right) \,dt.
	\end{aligned}
	\]		
	Similarly,  using Lemma \ref{lem:shock-interact-1},  we have
	\[
\begin{aligned}	
&\int_0^\infty \int_{\bbr} |E_3|^2 \,dxdt =\int_0^\infty \int_{\bbr}  \left| \sum_{i=1}^2 \left(\frac{(\tu_i^{X_i})_x}{\tv_i^{X_i}}\right)_x-\left(\frac{\tu_x^{X_1,X_2}}{\tv^{X_1,X_2}}\right)_x \right|^2 \,dxdt\\
& \leq C  \int_0^\infty \int_{\bbr} \Big(   (|(\tu_1)^{X_1}_{xx}|+|(\tu_1)^{X_1}_x| |(\tv_1)^{X_1}_x|)|\tv^{X_1,X_2}-\tv_1^{X_1}|\\
&\hspace{1cm}+(|(\tu_2)^{X_2}_{xx}|+|(\tu_2)^{X_2}_x| |(\tv_2)^{X_2}_x|)|\tv^{X_1,X_2}-\tv_2^{X_2}| +   |(\tu_1)^{X_1}_x||(\tv_2)^{X_2}_x|+|(\tu_2)^{X_2}_x||(\tv_1)^{X_1}_x|\Big)^2 \, dxdt\\
& \leq C \int_0^\infty  \int_{\bbr}  \Big(   (|(\tv_1)^{X_1}_{xx}|+|(\tv_1)^{X_1}_x|^2)|\tv^{X_1,X_2}-\tv_1^{X_1}|+(|(\tv_2)^{X_2}_{xx}|+|(\tv_2)^{X_2}_x|^2 )|\tv^{X_1,X_2}-\tv_2^{X_2}| \\
&\qquad  +|(\tv_1)^{X_1}_x| |(\tv_2)^{X_2}_x| \Big)^2 dxdt\\
&\leq C \int_0^\infty \left( \Vert |(\tv_1)^{X_1}_x | \tv^{X_1,X_2}-\tv_1^{X_1}| + |(\tv_2)^{X_2}_x| |\tv^{X_1,X_2}-\tv_2^{X_2}|+|(\tv_1)^{X_1}_x| |(\tv_2)^{X_2}_x| \Vert_{L^2(\R)}^2 \right) \,dt <\infty,
\end{aligned}
\]
and
\[
\begin{aligned}
\int_0^\infty \int_{\bbr} |E_2|^2 \,dxdt&=\int_0^\infty \int_{\bbr} \left| \sum_{i=1}^2 p(\tv_i^{X_i})_x-p(\tv^{X_1,X_2})_x\right|^2 \,dxdt\\
&\leq C \int_0^\infty \int_{\bbr} \left( |p'(\tv^{X_1,X_2})-p'(\tv_1^{X_1})| |(\tv_1)^{X_1}_x|+|p'(\tv^{X_1,X_2})-p'(\tv_2^{X_2})| |(\tv_2)^{X_2}_x| \right)^2 dxdt\\
&\leq C \int_0^\infty \Vert |(\tv_1)^{X_1}_x| |\tv^{X_1,X_2}-\tv_1^{X_1}| + |(\tv_2)^{X_2}_x| |\tv^{X_1,X_2}-\tv_2^{X_2}| \Vert_{L^2(\R)}^2 \,dt <\infty.
\end{aligned}
\]
Hence, $g'\in L^1(\bbr_+)$ which completes the proof.\\

Therefore, according to Section \ref{subsec:global} and Section \ref{subsec:time-asymptotic}, it remains to prove Proposition \ref{prop:H1-estimate} to obtain global-in-time well-posedness and the time-asymptotic behavior, as in Theorem \ref{thm:main}. In the following two sections, we mainly focus on proving Proposition \ref{prop:H1-estimate}, by using the method of $a$-contraction with shifts.

\subsection{Notations} In what follows, we use the following notations for simplicity. \\
1. $C$ denotes a positive $O(1)$-constant that may change from line to line, but is independent of the small constants $\delta_0, \eps_1, \delta_1,\delta_2$, $\lambda$ (to be introduced below) and the time $T$.\\
%2. For any function $f$ on $\bbr$ and any shifts $X_i(t)$, 
%\[
%f^{X_i}(x):=f(x- X_i(t)).
%\]
2. We omit the dependence on the pair of shifts $(X_1,X_2)$ of the composite wave \eqref{composite_wave} without confusion as:
\[
(\tv, \tu) (t,x) := (\tv^{X_1,X_2}, \tu^{X_1,X_2}) (t,x).
\]

\section{Estimates on weighted relative entropy with shifts}\label{sec:4}
\setcounter{equation}{0}
We will use the method of $a$-contraction with shifts to obtain the $L^\infty(0,T;L^2(\bbr))$-bounds of perturbations in \eqref{C-3}. For simplicity of our analysis,  we consider the effective velocity $h:=u-(\ln v)_x$ as in \cite{KVW3} (see also  \cite{Kang-V-NS17,KV-Inven}) to transform the Navier-Stokes equations \eqref{eq:NS} to the system:
\begin{align}
\begin{aligned}\label{eq:NS-h}
&v_t - h_x = \left(\ln v\right)_{xx},\\
&h_t + p(v)_x =0.
\end{aligned}
\end{align}
Then, it follows from \eqref{viscous-shock-u} that the associated viscous shocks $\tvi$ and $\thi:=\tu_i-(\ln \tvi)_x$ satisfy the following ODEs:
\begin{equation}\label{viscous-shock-h}
\begin{cases}
-\sigma_i(\tv_i)' - (\widetilde{h}_i)' = (\ln \tv_i)'',\\
-\sigma_i(\widetilde{h}_i)' + (p(\tv_i))' = 0,\\
(\tv_1,\widetilde{h}_1)(-\infty) = (v_-,u_-),\quad (\tv_1,\widetilde{h}_1)(+\infty) = (v_m,u_m),\\
(\tv_2,\widetilde{h}_2)(-\infty) = (v_m,u_m),\quad (\tv_2,\widetilde{h}_2)(+\infty) = (v_+,u_+).
\end{cases}
\end{equation} 
Let $(\tv,\widetilde{h})$ denote the shifted composite wave as
\[
(\tv,\widetilde{h})(t,x) := \left(\tv_1^{X_1}(x-\sigma_1t)+\tv_2^{X_2}(x-\sigma_2t)-v_m,\widetilde{h}_1^{X_1}(x-\sigma_1t)+\widetilde{h}_2^{X_2}(x-\sigma_2t)-u_m\right),
\]
which satisfies
\beq \label{com-eqn-h}
\begin{cases}
	\tv_t-\widetilde{h}_x=-\sum_{i=1}^2 \dot{X}_i (t) (\tv_i)^{X_i}_x +E_1\\
	\widetilde{h}_t+p(\tv)_x=-\sum_{i=1}^2 \dot{X_i}(\widetilde{h}_i)^{X_i}_x+E_2,
\end{cases}
\eeq
where $E_1$ and $E_2$ are the interaction terms defined as
\[
E_1 := -(\ln \widetilde{v})_{xx}+\sum_{i=1}^2(\ln \widetilde{v}^{X_i}_i)_{xx},\quad E_2:=p(\widetilde{v})_x-\sum_{i=1}^2p(\widetilde{v}_i^{X_i})_x.\]

\begin{remark}
	The structure of system \eqref{eq:NS-h} would be better for our analysis than using the original system \eqref{eq:NS}, since \eqref{eq:NS-h} is linear in the $h$-variable, but nonlinear in the $v$-variable that would be controlled by the parabolic term $(\ln v)_{xx}$. However, this transformation would not be crucial in our setting for small perturbation, but just for simplicity. 
\end{remark}

The goal of this section is to obtain the following control on the relative entropy.
\begin{lemma}\label{lem:rel-ent}
	There exists a positive constant $C$ such that for all $t\in [0,T]$, 
\begin{align}
	\begin{aligned}\label{est-rel-ent}
	&\int_{\R}\left(\frac{|h-\widetilde{h}|^2}{2}+Q(v|\tv)\right)\,dx+\int_0^t \left(\sum_{i=1}^2 \delta_i|\dot{X}_i|^2 + {G}_1+\mathcal{G}^S+D\right)\,ds\\
	&\le C\int_{\R}\left(\frac{|h_0(x)-\widetilde{h}(0,x)|^2}{2}+Q(v_0(x)|\tv(0,x))\right)\,dx +C\delta_0,
	\end{aligned}
\end{align}
where 
\begin{align*}
	&{G}_1(U):=\sum_{i=1}^2\int_{\bbr}|(a_i)^{X_i}_x|\left|h-\widetilde{h}-\frac{p(v)-p(\widetilde{v})}{\sigma_i}\right|^2\,dx,\\
	&\mathcal{G}^S(U):=\sum_{i=1}^2 \int_\R |(\widetilde{v}_i)^{X_i}_x| |\phi_i(p(v)-p(\widetilde{v})) |^2 dx,\\
	&{D}(U):=\int_{\bbr} |\pa_x(p(v)-p(\widetilde{v}))|^2\,dx.
\end{align*}
\end{lemma}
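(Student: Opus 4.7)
The plan is to apply the $a$-contraction method with two shifts to the system \eqref{eq:NS-h}, whose effective-velocity formulation is easier to handle because the nonlinearity is confined to the $v$-variable. First I would introduce a weight function of the form $a(t,x)=1+a_1^{X_1}(x-\sigma_1 t)+a_2^{X_2}(x-\sigma_2 t)$, where each $a_i$ is a monotone bounded function constructed from $\tv_i$ so that $\sigma_i a_i'>0$ and $|a_i'|\sim |(\tv_i)_x|$; the weight is chosen linear in $p(\tv_i)-p(v_m)$ (up to a parameter $\lambda\sim\delta_i$) as in \cite{KV-2shock,KVW3} so that the induced ``good'' term controls $|p(v)-p(\tv)|^2$ localized on each shock profile.

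Next, differentiate the weighted relative entropy $\int_\R a\,\eta(U|\tU)\,dx$ in time, using \eqref{eq:NS-h} for $(v,h)$ and \eqref{com-eqn-h} for $(\tv,\tilde h)$. A standard computation decomposes the result as
\begin{equation*}
\frac{d}{dt}\int_\R a\,\eta(U|\tU)\,dx = \sum_{i=1}^2\dot X_i(t)\,Y_i(U) + \mathcal{B}(U) - \mathcal{G}^{\mathrm{good}}(U) + \mathcal{I}(U),
\end{equation*}
where $Y_i(U)$ is exactly the bracket appearing in the shift ODE \eqref{X(t)}, $\mathcal{G}^{\mathrm{good}}$ collects the good terms (the dissipation $D$ arising from $(\ln v)_{xx}$ after integration by parts, the weighted term $-\sigma_i\int a_i' \eta(U|\tU)\,dx$ coming from the translation, and cross-terms that reconstruct $\mathcal{G}^S$ via Lemma \ref{lem-rel-quant}), $\mathcal{B}$ collects bad quadratic terms, and $\mathcal{I}$ contains the interaction errors driven by $E_1,E_2$. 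The choice \eqref{X(t)} turns $\dot X_i Y_i$ into $-\tfrac{\delta_i}{M}|\dot X_i|^2$, so summing in $i$ produces precisely the $\sum_i\delta_i|\dot X_i|^2$ contribution in \eqref{est-rel-ent}.

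The heart of the argument is to show $\mathcal{B}-\mathcal{G}^{\mathrm{good}}\le 0$ modulo lower-order terms. I would split each bad term localized by $(\tv_i)_x^{X_i}$ into its part near the $i$-th wave and its part far from it; on the former, apply the relative-pressure estimate \eqref{est-rel-3} to replace $Q(v|\tv)$ by $|p(v)-p(\tv)|^2$, then use the sharp Poincar\'e-type inequality of Lemma \ref{lem-poin} after reducing to a $v$-only average by completing the square against the linear piece $\bigl(h-\tilde h-\tfrac{p(v)-p(\tv)}{\sigma_i}\bigr)$, which is exactly what appears in $G_1$. The algebraic matching of constants (the factor $\tfrac{5(\gamma+1)}{8\gamma p(v_m)}(-p'(v_m))^{3/2}$ hidden in $M$) is what makes the bad quadratic form strictly dominated by $G_1+\mathcal{G}^S+D$. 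The parts far from the $i$-th wave are absorbed using the wave-separation bounds in Lemmas \ref{lem:shock-interact-1}--\ref{lem:shock-interact-2} and give only exponentially small contributions in time.

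Finally, for the interaction contributions $\mathcal{I}$, expand $E_1,E_2$ using \eqref{viscous-shock-h}; each term factors as $|(\tv_1)_x^{X_1}||(\tv_2)_x^{X_2}|$ times bounded quantities, or as $|(\tv_i)_x^{X_i}||\tv-\tv_i^{X_i}|$, so Lemma \ref{lem:shock-interact-1} yields $\int_0^\infty|\mathcal{I}|\,ds\le C\delta_1\delta_2\le C\delta_0$. Integrating the differential inequality from $0$ to $t$ then produces \eqref{est-rel-ent}. The main obstacle I anticipate is the sharp-constant bookkeeping in the $\mathcal{B}-\mathcal{G}^{\mathrm{good}}\le 0$ step: because the two shock strengths $\delta_1,\delta_2$ are \emph{independently} small, one cannot trade powers of one against the other, so every quadratic bad term must be dominated pointwise by the good terms generated by the \emph{same} shock, which is where the careful localization by $\phi_i$ in $\mathcal{G}^S$ and the separation Lemmas \ref{lem:shock-interact-1}--\ref{lem:shock-interact-2} become essential.
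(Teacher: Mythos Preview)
Your overall framework is correct, but there is a substantive gap in how you handle the shift contribution. The functional $Y_i(U)$ that appears in the relative-entropy identity (the paper's Lemma~\ref{lem:req}) is \emph{not} equal to the bracket in the shift ODE \eqref{X(t)}; the latter contains only the leading pieces $Y_{i1}+Y_{i2}=\int \frac{a}{\sigma_i}(\widetilde{h}_i)_x^{X_i}(p(v)-p(\tv))\,dx - \int a\, p(\tv_i^{X_i})_x(v-\tv)\,dx$, so that $\dot X_i Y_i = -\frac{\delta_i}{M}|\dot X_i|^2 + \dot X_i\sum_{j=3}^6 Y_{ij}$, and the remainders $Y_{ij}$ ($j\ge 3$) must be estimated separately. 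More importantly, the good term $-\frac{\delta_i}{M}|\dot X_i|^2$ is not merely recorded on the left of \eqref{est-rel-ent}: half of it is \emph{spent} to produce the squared average $\big(\int_0^1 w_i\,dy_i\big)^2$ (after the change of variable to $y_i\in(0,1)$), and this is precisely the mean needed in the Poincar\'e inequality of Lemma~\ref{lem-poin}. Your description ``reducing to a $v$-only average by completing the square against $h-\widetilde{h}-\tfrac{p(v)-p(\tv)}{\sigma_i}$'' conflates the $h$-maximization step (which yields $G_1$) with this separate, essential step; without extracting the average from the shift term, the leading bad term $\delta_i\alpha_m\int_0^1 |w_i|^2\,dy_i$ cannot be absorbed by the diffusion via Poincar\'e, and the estimate does not close. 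The specific value of $M$ you quote enters exactly here, to match $\frac{M\delta_i}{\sigma_m^4}\big(\int_0^1 w_i\,dy_i\big)^2$ against $\frac{5}{4}\delta_i\alpha_m\big(\int_0^1 w_i\,dy_i\big)^2$.

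Two smaller points. First, the weight parameter should satisfy $\delta_i\ll\lambda\le C\sqrt{\delta_i}$, not $\lambda\sim\delta_i$: the ratio $\delta_i/\lambda$ must be small in order to absorb terms such as $\frac{C\delta_i}{\lambda}G_1$ coming from $Y_{i3}$. Second, localizing the diffusion by $\phi_i$ (which you do need, to match it against $\mathcal{G}^S$) introduces an error $\frac{C}{t^2}\int\eta(U|\tU)\,dx$ from $|\partial_x\phi_i|\le C/t$, and this is not integrable near $t=0$; the paper handles this by a rough bound on $[0,1]$ (contributing the $+C\delta_0$) followed by a Gr\"onwall argument on $[1,t]$, rather than by the direct time-integration you propose.
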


\subsection{Construction of weight functions}
In order to deal with the two shock waves, we first introduce two weight functions $a_1, a_2$ associated with 1-shock and 2-shock respectively: for each $i=1,2,$ we define
\[a_i(x-\sigma_i t) = 1+\frac{\lambda(p(v_m)-p(\widetilde{v_i}(x-\sigma_it)))}{\delta_i},\]
where $\delta_i$ denotes the strength of $i$-shock. Here, $\lambda$ is a small constant chosen as $\delta_i\ll\lambda<C\sqrt{\delta_i}$ for $i=1,2$ so that it is large enough compared to the strengths of shocks.\\
Notice that $\frac{1}{2}<1-\lambda\le a_i \le 1$ and
\beq\label{avd}
(a_i)_x = -\frac{\lambda}{\delta_i}(p(\widetilde{v_i}))_x,
\eeq
from which we have
\beq\label{inta}
|(a_i)_x| \sim \frac{\lambda}{\delta_i} |(\widetilde{v_i})_x|,\quad \mbox{and so,}\quad  \|(a_i)_x\|_{L^\infty(\bbr)}\le \lambda\delta_i,\quad  \|(a_i)_x\|_{L^1(\bbr)} =\lambda.
\eeq
To handle the shifted composite wave, we consider the following composition of shifted weight functions as
\begin{equation}\label{weight}
	a^{X_1,X_2}(t,x):=a_1^{X_1}(x-\sigma_1 t) +a_2^{X_2}(x-\sigma_2t)-1=a_1(x-\sigma_1t-X_1(t))+a_2(x-\sigma_2t-X_2(t))-1.
\end{equation}
For notational simplicity as before, we will omit the dependence on shifts:
\[
a(t,x) := a^{X_1,X_2}(t,x).
\]

\subsection{Estimate on shifts}
We here show the estimate \eqref{bddx12} on the derivative of shifts. First, take $\eps_1$ and $\delta_0$ small enough such that $\eps_1, \delta_0 \in (0,\delta_*)$, where $\delta_*$ is the constant as in Lemma \ref{lem-rel-quant} so that we can apply Lemma \ref{lem-rel-quant} to the proof of the main theorem.\\
We use the assumption \eqref{perturbation_small} with the Sobolev inequality and \eqref{est-rel-2} to have
\beq\label{smp1}
\|p(v)-p(\tv)\|_{L^\infty((0,T)\times\bbr)}\le C\|v-\tv\|_{L^\infty((0,T)\times\bbr)} \le C\eps_1.
\eeq
Then, using \eqref{smp1} with $\sigma_i(\widetilde{h}_i)' =p(\tv_i)' $ from \eqref{viscous-shock-h}, it follows from \eqref{X(t)} that for each $i$,
\beq\label{dxbound}
|\dot{X}_i(t)| \le \frac{C}{\delta_i}  \||p(v)-p(\tv)|+|v-\tv| \|_{L^\infty(\bbr)} \int_\bbr (\tv_i)^{X_i}_x dx \le C\|v-\tv\|_{L^\infty(\bbr)},\quad t\le T.
\eeq
This gives the desired estimate \eqref{bddx12}. As in \eqref{sepx12}, the above estimates imply
\beq\label{fsx12}
X_1(t)+\sigma_1t \le \frac{\sigma_1}{2}t,\quad X_2(t) +\sigma_2t \ge \frac{\sigma_2}{2}t,\quad t\le T.
\eeq

\subsection{Relative entropy method}\label{sec:re}
First of all, we rewrite the system \eqref{eq:NS-h} in the abstract form:
\beq\label{system-vh}
\partial_t U +\partial_x A(U)= \partial_x \Big(M(U) \partial_x D\eta(U) \Big),
\eeq
where
\[
U:={v \choose h},\quad A(U):={-h \choose p(v)},\quad M(U):={\frac{1}{\gamma p(v)} \quad 0 \choose\quad\  0\quad\ 0},
\]
and 
\beq\label{nablae}
D\eta(U)={-p(v)\choose h}.
\eeq
Indeed, by $-p'(v)v=\gamma p(v)$, one has
\[
\big(\ln v\big)_{xx}=\left(\frac{(-p(v))_x}{\gamma p(v)}\right)_{x}.
\]
Then, using the nonnegative matrix $M(U)$ and the gradient \eqref{nablae} of the entropy $\eta(U):=\frac{h^2}{2}+Q(v)$ of \eqref{eq:NS-h}, where $Q(v)=\frac{v^{-\gamma+1}}{\gamma-1}$, i.e., $Q'(v)=-p(v)$,
we have
\[
{ \big(\ln v\big)_{xx} \choose 0} = \partial_x\Big(M(U) \partial_x D\eta(U) \Big).
\]
Let $\widetilde{U}_1(x-\sigma_1 t)$ and $\widetilde{U}_2(x-\sigma_2t)$ be the viscous 1-shock and 2-shock of the system \eqref{system-vh}. 
Then, the system \eqref{viscous-shock-h} can be cast in the following form: for each $i=1,2,$
\begin{equation}\label{viscous-shock}
	-\sigma_i(\widetilde{U}_i)_x +A(\widetilde{U}_i)_x = \pa_x\left(M(\widetilde{U}_i)\pa_x D\eta(\widetilde{U}_i)\right).
\end{equation}
Then, from  \eqref{com-eqn-h}, the shifted composite wave $\tU:=(\tv,\widetilde{h})$ satisfies
	\beq\label{sysh}
	\pa_t\tU +A(\tU)_x=\pa_x(M(\tU)\pa_xD\eta(\tU))-\sum_{i=1}^2\dot{X}_i(\widetilde{U}_i)^{X_i}_x +\begin{pmatrix}
		E_1\\E_2
	\end{pmatrix}.\eeq
	
For the relative entropy method, we use the relative entropy functional $\eta(U|V)$  and relative flux $A(U|V)$ defined as 
\[\eta(U|V):= \eta(U)-\eta(V)-D\eta(V)(U-V)\]
and
\[A(U|V):=A(U)-A(V)-DA(V)(U-V).\]
We also define the relative entropy flux $G(U;V)$ as
\[G(U;V):=G(U)-G(V)-D\eta(V)(A(U)-A(V)),\]
where $G$ is the entropy flux for $\eta$ satisfying $\pa_iG(U) = \sum_{k=1}^2 \pa_k\eta(U)+\pa_iA_k(U)$ for $i=1,2$. The relative quantities for the system \eqref{eq:NS-h} can be explicitly written as follows:
\begin{align*}
	&\eta(U|\widetilde{U}) = \frac{|h-\widetilde{h}|^2}{2}+Q(v|\widetilde{v}),\\
	&A(U|\widetilde{U}) = \begin{pmatrix}0\\ p(v|\widetilde{v})\end{pmatrix},\\
	&G(U;\widetilde{U}) = (p(v)-p(\widetilde{v}))(h-\widetilde{h}).
\end{align*}
Here, the relative internal energy $Q(v|\widetilde{v})$ and the relative pressure $p(v|\widetilde{v})$ are defined as
\[Q(v|\widetilde{v}) = Q(v)-Q(\widetilde{v}) -Q'(\widetilde{v})(v-\widetilde{v}),\quad \mbox{and}\quad p(v|\widetilde{v}) = p(v)-p(\widetilde{v}) -p'(\widetilde{v})(v-\widetilde{v}).\]

In order to obtain the desired estimate in Lemma \ref{lem:rel-ent}, we need to control the relative entropy between the solution $U$ to \eqref{system-vh} and the shifted composite wave $\widetilde{U}$. However, as we mentioned, the direct estimate of the relative entropy would fail to control it, and therefore, we instead estimate the following weighted relative entropy:
\[\int_\bbr a(t,x)\eta\left(U(t,x)\Big|\tU(t,x)\right)\,d x.\]
It follows from the choice of the weight function that the global weight $a(t,x)$ satisfies $1/2\le a\le 1$ by the smallness of $\lambda$, which recovers the control of the relative entropy. Following the estimate in \cite[Lemma 5.2]{KV-2shock} (see also \cite[Lemma 4.3]{KVW3} and \cite[Lemma 4]{Vasseur-Book}), the time-derivative of the weighted relative entropy can be computed as follows.

\begin{lemma}\label{lem:req}
	Let $a := a^{X_1,X_2}$ be the weight function defined in \eqref{weight} and $X_1$, $X_2$ be any absolutely continuous function. Let $U$ be a solution to system \eqref{system-vh} and $\tU$ be the shifted composition wave satisfying \eqref{sysh}. Then, we have
	\[
		\frac{d}{dt}\int_{\bbr}a(t,x)\eta\left(U(t,x)\Big|\widetilde{U}(t,x)\right)\,dx = \sum_{i=1}^2(\dot{X}_i(t)Y_i(U))+\mathcal{J}^{\rm bad}(U)-\mathcal{J}^{\rm good}(U),
	\]
	where
	\beq\label{yi}
	Y_i(U):=-\int_{\bbr} (a_i)^{X_i}_x\eta(U|\widetilde{U})\,dx +\int_\bbr a(\widetilde{U}_i)^{X_i}_xD^2\eta(\widetilde{U})(U-\widetilde{U})\,dx,
	\eeq
	and
	\begin{align*}
		&\mathcal{J}^{\rm bad}(U):=\sum_{i=1}^2\Bigg[\int_{\bbr}(a_i)^{X_i}_x (p(v)-p(\widetilde{v}))(h-\widetilde{h})\,dx + \sigma_i\int_\bbr a(\widetilde{v}_i)^{X_i}_xp(v|\widetilde{v})\,dx\\
		&\hspace{3cm}-\int_{\bbr}(a_i)^{X_i}_x\frac{p(v)-p(\widetilde{v})}{\gamma p(v)}\pa_x(p(v)-p(\widetilde{v}))\,dx\\
		&\hspace{3cm}-\int_{\bbr}(a_i)^{X_i}_x(p(v)-p(\widetilde{v}))^2\frac{\pa_xp(\widetilde{v})}{\gamma p(v)p(\widetilde{v})}\,dx\Bigg]\\
		&\hspace{2cm}-\int_{\bbr}a\pa_x(p(v)-p(\widetilde{v}))\frac{p(v)-p(\widetilde{v})}{\gamma p(v)p(\widetilde{v})}\pa_xp(\widetilde{v})\,dx\\
		&\hspace{2cm}+\int_{\bbr}a(p(v)-p(\widetilde{v}))E_1\,dx-\int_{\bbr}a(h-\widetilde{h})E_2\,dx,\\
		&\mathcal{J}^{\rm good}(U):=\sum_{i=1}^2\left[\frac{\sigma_i}{2}\int_{\bbr}(a_i)^{X_i}_x\left|h-\widetilde{h}\right|^2\,dx+\sigma_i\int_{\bbr}(a_i)^{X_i}_xQ(v|\widetilde{v})\,dx\right]\\
		&\hspace{3cm}+\int_{\bbr}\frac{a}{\gamma p(v)}|\pa_x(p(v)-p(\widetilde{v}))|^2\,dx.
	\end{align*}
\end{lemma}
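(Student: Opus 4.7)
The plan is a direct computation of $\frac{d}{dt}\int_{\bbr} a\,\eta(U|\tU)\,dx$ in the spirit of the weighted relative entropy framework of \cite[Lemma 5.2]{KV-2shock}, adapted to handle two shifts simultaneously and to pick up the composite-wave interaction errors $E_1,E_2$. First I would split
\[
\frac{d}{dt}\int_{\bbr} a\,\eta(U|\tU)\,dx \;=\; \int_{\bbr}\partial_t a\cdot \eta(U|\tU)\,dx \;+\; \int_{\bbr} a\,\partial_t \eta(U|\tU)\,dx.
\]
Since $a=a_1^{X_1}+a_2^{X_2}-1$ with $a_i^{X_i}(t,x)=a_i(x-\sigma_i t-X_i(t))$, one has $\partial_t a=-\sum_i(\sigma_i+\dot X_i)(a_i)^{X_i}_x$. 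Feeding this into the first integral and using $\eta(U|\tU)=\tfrac12|h-\widetilde{h}|^2+Q(v|\tv)$, the $-\sigma_i$ piece produces the first two summands of $\mathcal{J}^{\rm good}$, while the $-\dot X_i$ piece produces the first summand $-\int(a_i)^{X_i}_x\eta(U|\tU)\,dx$ of $Y_i(U)$ in \eqref{yi}.

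For the second integral I would use the pointwise identity
\[
\partial_t\eta(U|\tU)=[D\eta(U)-D\eta(\tU)]\cdot\partial_t U-[D^2\eta(\tU)(U-\tU)]\cdot\partial_t\tU,
\]
with $D\eta(U)=(-p(v),h)^T$ and $D^2\eta(\tU)(U-\tU)=(-p'(\tv)(v-\tv),h-\widetilde{h})^T$, and substitute \eqref{system-vh} for $\partial_t U$ and \eqref{sysh} for $\partial_t\tU$. The shift contribution $-\sum_i\dot X_i(\widetilde{U}_i)^{X_i}_x$ in $\partial_t\tU$, paired against $a\,[D^2\eta(\tU)(U-\tU)]$, delivers exactly the second summand $\int a(\widetilde{U}_i)^{X_i}_x D^2\eta(\tU)(U-\tU)\,dx$ of $Y_i(U)$. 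The interaction term $(E_1,E_2)^T$ in $\partial_t\tU$, paired against the same factor, produces after the algebraic rewrite $p'(\tv)(v-\tv)=p(v)-p(\tv)-p(v|\tv)$ the last line $\int a(p(v)-p(\tv))E_1\,dx-\int a(h-\widetilde{h})E_2\,dx$ of $\mathcal{J}^{\rm bad}$; the leftover $p(v|\tv)$ piece is absorbed into the hyperbolic bookkeeping below.

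The hyperbolic contributions from $\partial_x A(U)$ and $\partial_x A(\tU)$ are reorganized by integration by parts using the relative flux $A(U|\tU)=(0,p(v|\tv))^T$ and the entropy flux $G(U;\tU)=(p(v)-p(\tv))(h-\widetilde{h})$: the term $G(U;\tU)\,\partial_x a$ splits via $\partial_x a=\sum_i(a_i)^{X_i}_x$ to yield the first summand of $\mathcal{J}^{\rm bad}$, and the term $a\,A(U|\tU)\cdot\partial_x D\eta(\tU)$, rewritten through the shock ODEs \eqref{viscous-shock} (which give $\partial_x D\eta(\tU)=\sum_i\sigma_i(\widetilde{U}_i)^{X_i}_x$ modulo the parabolic correction), yields $\sum_i\sigma_i\int a(\tv_i)^{X_i}_x p(v|\tv)\,dx$. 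The parabolic contributions from $\partial_x(M(U)\partial_x D\eta(U))$ and $\partial_x(M(\tU)\partial_x D\eta(\tU))$ are treated by integration by parts in $x$: the diagonal square gives the dissipation $-\int\frac{a}{\gamma p(v)}|\partial_x(p(v)-p(\tv))|^2\,dx$ (the last piece of $\mathcal{J}^{\rm good}$), and the cross terms produced when $\partial_x$ hits $a$ or $D\eta(\tU)$ deliver the three remaining $(a_i)^{X_i}_x$-weighted terms in $\mathcal{J}^{\rm bad}$ together with the single unbracketed term $-\int a\,\partial_x(p(v)-p(\tv))\frac{(p(v)-p(\tv))\partial_x p(\tv)}{\gamma p(v)p(\tv)}\,dx$.

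The principal obstacle here is not analytic but combinatorial: one must carefully collect like terms through the two integrations by parts, exploiting the splitting $\partial_x a=\sum_i(a_i)^{X_i}_x$ to distribute the weight-derivative contributions between the two waves, and systematically convert $p'(\tv)(v-\tv)$ into $p(v)-p(\tv)-p(v|\tv)$ so that the $E_1$-error aligns with $(p(v)-p(\tv))E_1$ while the leftover $p(v|\tv)$ pieces land in the stated $\sigma_i$-transport terms. No smallness of $\delta_0,\eps_1$ or wave separation is needed for this identity; only the chain rule, the PDEs \eqref{system-vh}--\eqref{sysh}, integration by parts on $\bbr$ (justified by the $H^1$-regularity of $U-\tU$ and its decay at $\pm\infty$), and the algebraic definitions of $\eta(U|\tU)$, $A(U|\tU)$, and $G(U;\tU)$ are invoked.
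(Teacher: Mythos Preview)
Your proposal is correct and follows the same relative-entropy computation that the paper invokes (the paper does not spell it out here but defers to \cite[Lemma~5.2]{KV-2shock}, \cite[Lemma~4.3]{KVW3}, \cite[Lemma~4]{Vasseur-Book}); in fact you supply more detail than the paper does. One small imprecision: the leftover $-\int a\,p(v|\tv)\,E_1\,dx$ from rewriting $p'(\tv)(v-\tv)=p(v)-p(\tv)-p(v|\tv)$ is not absorbed purely by the hyperbolic bookkeeping---it cancels only after you also bring in the parabolic contribution $-\int a\,p(v|\tv)(\ln\tv)_{xx}\,dx$ (coming from the $D^2\eta(\tU)(U-\tU)\cdot\partial_x(M(\tU)\partial_x D\eta(\tU))$ piece) and the extra $\sum_i\int a\,p(v|\tv)(\ln\tv_i^{X_i})_{xx}\,dx$ produced when you substitute $(\widetilde h_i)_x=-\sigma_i(\tv_i)_x-(\ln\tv_i)_{xx}$ in the transport term $-\int a\,\widetilde h_x\,p(v|\tv)\,dx$; together these three pieces cancel exactly, leaving only the stated $\sigma_i\int a(\tv_i)^{X_i}_x p(v|\tv)\,dx$.
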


\begin{remark}
We note that the definition of the weight $a_i$ implies
\[\sigma_i (a_i)^{X_i}_x = -\frac{\lambda\sigma_i}{\delta_i}(p(\widetilde{v_i}^{X_i}))_x = \frac{\gamma\lambda\sigma_i}{\delta_i}(\widetilde{v_i}^{X_i})^{-\gamma-1}(\widetilde{v}_i^{X_i})_x>0,\]
from which we observe that $\mathcal{J}^{\rm good}$ consists of good terms.
\end{remark}

\subsection{Maximization on $h-\widetilde{h}$}
Among the terms in $\mathcal{J}^{\rm bad}(U)$, a primary bad term is
\[\int_{\bbr}(a_i)^{X_i}_x(p(v)-p(\tv))(h-\widetilde{h})\,dx\] 
where the perturbations for $p(v)$ and $h$ are coupled. In order to exploit the parabolic term on $v$-variable and hence use the Poincar\'e-type inequality, we separate $h-\widetilde{h}$ from $p(v)-p(\widetilde{v})$ by using the quadratic structure of $h-\widetilde{h}$. Precisely, since
\begin{align*}
	(a_i)^{X_i}_x&(p(v)-p(\widetilde{v}))(h-\widetilde{h})-\frac{\sigma_i}{2}(a_i)^{X_i}_x|h-\widetilde{h}|^2\\
	&=-\frac{\sigma_i(a_i)^{X_i}_x}{2}\left|h-\widetilde{h}-\frac{p(v)-p(\widetilde{v})}{\sigma_i}\right|^2+\frac{(a_i)^{X_i}_x}{2\sigma_i}|p(v)-p(\widetilde{v})|^2,
\end{align*}
we rewrite the terms $\mathcal{J}^{\rm bad}(U)-\mathcal{J}^{\rm good}(U)$ in the above lemma as 
\[\mathcal{J}^{\text{bad}}(U) - \mathcal{J}^{\text{good}}(U) = \mathcal{B}(U)-\mathcal{G}(U),\]
where
\begin{align*}
	\mathcal{B}(U)&:=\sum_{i=1}^2\Bigg[\frac{1}{2\sigma_i}\int_{\bbr}(a_i)^{X_i}_x |p(v)-p(\widetilde{v})|^2\,dx + \sigma_i\int_\bbr a(\widetilde{v}_i)^{X_i}_xp(v|\widetilde{v})\,dx\\
	&\quad\qquad-\int_{\bbr}(a_i)^{X_i}_x\frac{p(v)-p(\widetilde{v})}{\gamma p(v)}\pa_x(p(v)-p(\widetilde{v}))\,dx\\
	&\quad\qquad-\int_{\bbr}(a_i)^{X_i}_x(p(v)-p(\widetilde{v}))^2\frac{\pa_xp(\widetilde{v})}{\gamma p(v)p(\widetilde{v})}\,dx\Bigg]\\
	&\quad-\int_{\bbr}a\pa_x(p(v)-p(\widetilde{v}))\frac{p(v)-p(\widetilde{v})}{\gamma p(v)p(\widetilde{v})}\pa_xp(\widetilde{v})\,dx\\
	&\quad+\int_{\bbr}a(p(v)-p(\widetilde{v}))E_1\,dx-\int_{\bbr}a(h-\widetilde{h})E_2\,dx,\\
	\mathcal{G}(U)&:=\sum_{i=1}^2\left[\frac{\sigma_i}{2}\int_{\bbr}(a_i)^{X_i}_x\left|h-\widetilde{h}-\frac{p(v)-p(\widetilde{v})}{\sigma_i}\right|^2\,dx+\sigma_i\int_{\bbr}(a_i)^{X_i}_xQ(v|\widetilde{v})\,dx\right]\\
	&\quad +\int_{\bbr}\frac{a}{\gamma p(v)}|\pa_x(p(v)-p(\widetilde{v}))|^2\,dx.
\end{align*}

Therefore, we will estimate the right-hand side of the following equation: 
\begin{equation}\label{est-1}
	\frac{d}{dt}\int_{\bbr}a\eta(U|\widetilde{U})\,dx = \sum_{i=1}^2(\dot{X}_iY_i(U)) + \mathcal{B}(U)-\mathcal{G}(U).
\end{equation}

\subsection{Decompositions}
We first  name each term of $\mathcal{B}(U)$ and $\mathcal{G}(U)$ as follows:
\begin{align*}
	&\mathcal{B}(U) =\sum_{i=1}^5 \mathcal{B}_i(U) + \mathcal{S}_1 +\mathcal{S}_2,\\
	&\mathcal{G}(U) = \mathcal{G}_1(U) + \mathcal{G}_2(U) + \mathcal{D}(U),
\end{align*}
where
\begin{align*}
	&\mathcal{B}_1(U) := \sum_{i=1}^2\frac{1}{2\sigma_i}\int_{\bbr}(a_i)^{X_i}_x |p(v)-p(\widetilde{v})|^2\,dx,\\
	&\mathcal{B}_2(U) := \sum_{i=1}^2 \sigma_i\int_\bbr a(\widetilde{v}_i)^{X_i}_xp(v|\widetilde{v})\,dx,\\
	&\mathcal{B}_3(U) := -\sum_{i=1}^2\int_{\bbr}(a_i)^{X_i}_x\frac{p(v)-p(\widetilde{v})}{\gamma p(v)}\pa_x(p(v)-p(\widetilde{v}))\,dx,\\
	&\mathcal{B}_4(U) :=-\sum_{i=1}^2\int_{\bbr}(a_i)^{X_i}_x(p(v)-p(\widetilde{v}))^2\frac{\pa_xp(\widetilde{v})}{\gamma p(v)p(\widetilde{v})}\,dx,\\
	&\mathcal{B}_5(U):=-\int_{\bbr}a\pa_x(p(v)-p(\widetilde{v}))\frac{p(v)-p(\widetilde{v})}{\gamma p(v)p(\widetilde{v})}\pa_xp(\widetilde{v})\,dx,\\
	&\mathcal{S}_1:= \int_{\bbr}a(p(v)-p(\widetilde{v}))E_1\,dx,\quad\mathcal{S}_2:=-\int_{\bbr}a(h-\widetilde{h})E_2\,dx,
\end{align*}
and 
\begin{align*}
	&\mathcal{G}_1(U):=\sum_{i=1}^2\frac{\sigma_i}{2}\int_{\bbr}(a_i)^{X_i}_x\left|h-\widetilde{h}-\frac{p(v)-p(\widetilde{v})}{\sigma_i}\right|^2\,dx,\\
	&\mathcal{G}_2(U):= \sum_{i=1}^2\sigma_i\int_{\bbr}(a_i)^{X_i}_xQ(v|\widetilde{v})\,dx,\\
	&\mathcal{D}(U):=\int_{\bbr}\frac{a}{\gamma p(v)}|\pa_x(p(v)-p(\widetilde{v}))|^2\,dx.
\end{align*}
For each $Y_i$ in \eqref{yi}, we first write $Y_i$ more explicitly as follows:
\begin{align*}
	Y_i(U)
	& = -\int_{\bbr}(a_i)^{X_i}_x\left(\frac{|h-\widetilde{h}|^2}{2}+Q(v|\widetilde{v})\right)\,dx +\int_{\bbr}a (\widetilde{h}_i)^{X_i}_x(h-\widetilde{h})\,dx\\
	&\quad -\int_{\bbr}ap'(\widetilde{v})(\widetilde{v}_i)^{X_i}_x(v-\widetilde{v})\,dx.
\end{align*}
Since a key idea in our analysis is to apply Poincar\'e-type inequality of Lemma \ref{lem-poin} by extracting a good term on an average of perturbation $p(v)-p(\tv)$ from the shift part $\sum_{i=1}^2(\dot{X}_iY_i(U))$, we decompose $Y_i$ as follows: for each $i$,
\[Y_i = \sum_{j=1}^6 Y_{ij},\]
where
\begin{align*}
	&Y_{i1} := \int_{\bbr}\frac{a}{\sigma_i}(\widetilde{h}_i)^{X_i}_x(p(v)-p(\widetilde{v}))\,dx,\\
	&Y_{i2} := -\int_{\bbr}ap'(\widetilde{v}^{X_i}_i)(\widetilde{v}_i)^{X_i}_x(v-\widetilde{v})\,dx = -\int_{\bbr}a (p(\widetilde{v}^{X_i}_i))_x(v-\widetilde{v})\,dx,\\
	&Y_{i3} := \int_{\bbr}a(\widetilde{h}_i)^{X_i}_x\left(h-\widetilde{h}-\frac{p(v)-p(\widetilde{v})}{\sigma_i}\right)\,dx,\\
	&Y_{i4} := -\int_{\bbr}a(p'(\widetilde{v})-p'(\widetilde{v}^{X_i}_i))(\widetilde{v}_i)^{X_i}_x(v-\widetilde{v})\,dx,\\
	&Y_{i5} := -\frac{1}{2}\int_{\bbr}(a_i)^{X_i}_x\left(h-\widetilde{h}-\frac{p(v)-p(\widetilde{v})}{\sigma_i}\right)\left(h-\widetilde{h}+\frac{p(v)-p(\widetilde{v})}{\sigma_i}\right)\,dx,\\
	&Y_{i6} := -\int_{\bbr}(a_i)^{X_i}_xQ(v|\widetilde{v})\,dx -\int_{\bbr}\frac{(a_i)^{X_i}_x}{2\sigma_i^2}(p(v)-p(\widetilde{v}))^2\,dx.
\end{align*}
Observe that it follows from our construction \eqref{X(t)} on shifts $X_i$ that
\[\dot{X}_i = -\frac{M}{\delta_i}(Y_{i1}+Y_{i2}),\]
which yields
\begin{equation}\label{XiYi}
	\dot{X}_iY_i(U) = -\frac{\delta_i}{M}|\dot{X_i}|^2+\dot{X}_i\sum_{j=3}^6Y_{ij}.
\end{equation}
Here, the good term $-\frac{\delta_i}{M}|\dot{X_i}|^2$ would give an average of linear perturbation on $v$-variable as mentioned above, while the remaining part  would be controlled by the good terms in $\mathcal{G}(U)$. To show it, we combine \eqref{est-1} and \eqref{XiYi} to have
\begin{align}
	\begin{aligned}\label{est}
	&\frac{d}{dt} \int_\R a \eta (U| \tU) \,dx =\mathcal{R}, \quad\mbox{where}\\
	&\mathcal{R} :=-\sum_{i=1}^2\frac{\delta_i}{M}|\dot{X}_i|^2 + \sum_{i=1}^2\left(\dot{X_i}\sum_{j=3}^6Y_{ij}\right) +\sum_{i=1}^5\mathcal{B}_i +\mathcal{S}_1+\mathcal{S}_2-\mathcal{G}_1-\mathcal{G}_2-\mathcal{D}. \\
	&\quad= \underbrace{-\sum_{i=1}^2\frac{\delta_i}{2M}|\dot{X}_i|^2 + \mathcal{B}_1 +\mathcal{B}_2-\mathcal{G}_2-\frac{3}{4}\mathcal{D}}_{=:\mathcal{R}_1}\\
	&\quad\quad \underbrace{-\sum_{i=1}^2\frac{\delta_i}{2M}|\dot{X}_i|^2 + \sum_{i=1}^2\left(\dot{X_i}\sum_{j=3}^6Y_{ij}\right) +\sum_{i=3}^5\mathcal{B}_i +\mathcal{S}_1+\mathcal{S}_2-\mathcal{G}_1-\frac{1}{4}\mathcal{D}}_{=:\mathcal{R}_2}.
	\end{aligned}
\end{align}

A motivation of the decomposition \eqref{est} is that the two bad terms $\mathcal{B}_1$ and $\mathcal{B}_2$ are the main bad terms, which should be controlled by using the sharp Poincar\'e-type inequality of Lemma \ref{lem-poin}. The remaining terms in $\mathcal{R}_2$ can be controlled in a rather rough way. 
We first focus on the estimate of $\mathcal{R}_1$.

\subsection{Estimate of the main part $\mathcal{R}_{1}$}\label{sec:4.5}
A key idea for estimates of  $\mathcal{R}_{1}$ is to use the Poincar\'e-type inequality \eqref{poincare} in Lemma \ref{lem-poin}. To apply the Poincar\'e-type inequality, we need to localize the perturbation near each wave by using the cutoff functions $\phi_1, \phi_2$ defined in \eqref{localization} (see Remark \ref{rem-phi}), and then change of variables from whole space $\bbr$ to a bounded interval $(0,1)$ for each wave.  
For that, we will consider the following change of variables in space: 
\[
y_1:=1-\frac{p(v_m)-p(\widetilde{v}_1(x-\sigma_1t))}{\delta_1},\qquad y_2 := \frac{p(v_m)-p(\widetilde{v}_2(x-\sigma_2t))}{\delta_2}.
\]
Indeed, for each $i$, $y_i :\bbr\to (0,1)$ is monotone in $\xi_i:=x-\sigma_i t$, since
\[\frac{d y_1}{d\xi_1} = \frac{1}{\delta_1}p(\widetilde{v}_1)' >0,  \quad \frac{d y_2}{d\xi_2} = -\frac{1}{\delta_2}p(\widetilde{v}_2)' >0, \]
and
\[\lim_{\xi_i\to-\infty}y_i = 0,\quad \lim_{\xi_i\to+\infty} y_i = 1.\]
In terms of the new variables, we will apply the Poincar\'e-type inequality to each perturbation $w_i$ localized by $\phi_i$ respectively:
 \[
 \begin{aligned}
&w_1:=\phi_1(x+X_1(t)) \Bigg(p(v(t,x+X_1(t))) - p\Big(\tv_1(x-\sigma_1 t)+\tv_2(x-\sigma_2 t-X_2(t)+X_1(t))-v_m \Big) \Bigg) \circ y_1^{-1}, \\
&w_2:=\phi_2(x+X_2(t)) \Bigg(p(v(t,x+X_2(t))) - p\Big(\tv_1(x-\sigma_1 t-X_1(t)+X_2(t))+\tv_2(x-\sigma_2 t)-v_m \Big) \Bigg) \circ y_2^{-1}.
\end{aligned}
\]
In what follows, for simplicity, we use the following notations to denote constants of $O(1)$-scale:
\[\sigma_m:=\sqrt{-p'(v_m)},\quad \alpha_m:=\frac{\gamma+1}{2\gamma\sigma_mp(v_m)},\]
which are indeed independent of the small constants $\delta_i$ since $v_+/2\le v_m \le v_+$.\\
Since the shock strengths $\delta_i$ are bounded by $\delta_0$ respectively, the following estimates on the $O(1)$-constants hold:
\begin{equation}\label{shock_speed_est}
	|\sigma_1- (-\sigma_m)| \leq C \delta_1,\quad |\sigma_2-\sigma_m|\le C\delta_2,
\end{equation}
and
\begin{equation}\label{shock_speed_est-2}
	\|\sigma_m^2-|p'(\tv_i)|\|_{L^\infty} \le C\delta_i,\quad \left\|\frac{1}{\sigma_m^2}-\frac{p(\tv_i)^{-\frac{1}{\gamma}-1}}{\gamma}\right\|_{L^\infty}\le C\delta_i,\quad \left\|\frac{1}{\sigma_m^2}-\frac{p(\tv)^{-\frac{1}{\gamma}-1}}{\gamma}\right\|_{L^\infty}\le C\delta_0.
\end{equation}
We are now ready to estimate the terms in $\mathcal{R}_1$. 
As mentioned, we need to extract a good term on an average of the perturbation $w_i$ from the shift part $\frac{\delta_i}{2M}|\dot{X}_i|^2$  as follows, so that we could apply Lemma \ref{lem-poin}.\\

\noindent $\bullet$  {\bf(Estimate of shift part $\frac{\delta_i}{2M}|\dot{X}_i|^2$):}
We will show that: for each $i$,
		\begin{align}
		\begin{aligned} \label{L2.2}
		-\frac{\delta_i}{2M}|\dot{X}_i|^2&\le -\frac{M\delta_i}{\sigma_m^4}\left(\int_0^1 w_i\,dy_i\right)^2+C\delta_i(\lambda+\delta_0+\e_1)^2\int_0^1|w_i|^2\,dy_i\\
		&\quad +C \delta_i \exp(-C \delta_i t) \int_\R \eta(U|\widetilde{U}) \, dx.
	\end{aligned}
	\end{align}
	As the estimates of $\frac{\delta_1}{2M}|\dot{X}_1|^2$ and $\frac{\delta_2}{2M}|\dot{X}_2|^2$ are the same, we only handle the case of $X_1$. Since $\dot{X}_1=-\frac{M}{\delta_1}(Y_{11}+Y_{12})$, we first estimate $Y_{11}$ and $Y_{12}$.\\
 Using the relation $\phi_1+\phi_2=1$, we have
	\begin{align*}
		Y_{11} 
		&=\int_{\bbr}\frac{a}{\sigma_1}(\widetilde{h}_1)^{X_1}_x\phi_1(p(v)-p(\widetilde{v}))\,dx+\int_{\bbr}\frac{a}{\sigma_1}(\widetilde{h}_1)^{X_1}_x\phi_2(p(v)-p(\widetilde{v}))\,dx\\
		&=\frac{1}{\sigma_1^2}\int_{\bbr}a p(\widetilde{v}_1)^{X_1}_x \phi_1 (p(v)-p(\tv))\,dx+\int_{\bbr}\frac{a}{\sigma_1^2}p(\tv_1)^{X_1}_x\phi_2(t,x)(p(v)-p(\widetilde{v}))\,dx.
	\end{align*}
The first term is a main part for applying the Poincar\'e inequality.  We use change of variable $x\to x+X_1(t)$ and apply the change of variables for $y_1, w_1$ to observe that
\[
\frac{1}{\sigma_1^2}\int_{\bbr}a p(\widetilde{v}_1)^{X_1}_x \phi_1 (p(v)-p(\tv))\,dx = \frac{\delta_1}{\sigma_1^2}\int_0^1a(t,x+X_1(t)) w_1\,dy_1.
\]
Then, using \eqref{shock_speed_est} and $\|a-1\|_{L^\infty(\bbr_+\times\bbr)}\le \lambda$, we have
	\[\left|Y_{11}- \frac{\delta_1}{\sigma_m^2}\int_0^1w_1\,dy_1\right|\le C\delta_1(\lambda+\delta_0)\int_0^1|w_1|\,dy_1+C\int_\R | (\widetilde{v}_1)^{X_1}_x|  \phi_2 | p(v)-p(\widetilde{v}) | \,dx .\]
	To estimate $Y_{12}$, we first use the equation of state $v=p(v)^{-1/\gamma}$ and Taylor expansion to have
\[\left|v-\tv -\left(-\frac{p(\tv)^{-\frac{1}{\gamma}-1}}{\gamma}(p(v)-p(\tv))\right)\right|\le C|p(v)-p(\tv)|^2,\]
	which together with the estimates \eqref{shock_speed_est-2} and \eqref{smp1} yields
	\[\left|v-\tv -\left(-\frac{1}{\sigma_m^2}(p(v)-p(\tv))\right)\right|\le C(\delta_0+\e_1)|p(v)-p(\tv)|.\]	
This with the same argument as above implies
	\[\left|Y_{12}-\frac{\delta_1}{\sigma_m^2}\int_0^1w_1\,dy_1\right|\leq C\delta_1(\lambda+\delta_0+\e_1)\int_0^1|w_1|\,dy_1+C \int_\R |(\tv_1)^{X_1}_x| \phi_2 |p(v)-p(\tv)| \,dx .\]
	We combine the estimates for $Y_{11}$ and $Y_{12}$ to obtain
	\begin{align*}
		\left|\dot{X}_1 + \frac{2M}{\sigma_m^2}\int_0^1w_1\,dy_1\right|&\le \frac{M}{\delta_1}\left(\left|Y_{11}-\frac{\delta_1}{\sigma_m^2}\int_0^1w_1\,dy_1\right|+\left|Y_{12}-\frac{\delta_1}{\sigma_m^2}\int_0^1w_1\,dy_1\right|\right)\\
		&\le C(\lambda+\delta_0+\e_1)\int_0^1|w_1|\,dy_1+\frac{C}{\delta_1}\int_\R |(\widetilde{v}_1)^{X_1}_x | \phi_2 | p(v)-p(\tv) | dx ,
	\end{align*}
	which implies
	\[\left(\left|\frac{2M}{\sigma_m^2}\int_0^1w_1\,dy_1\right|-|\dot{X}_1|\right)^2\le C(\lambda+\delta_0+\e_1)^2\int_0^1|w_1|^2\,dy_1+\frac{C}{\delta_1^2} \left(\int_\R |(\widetilde{v}_1)^{X_1}_x | \phi_2 | (p(v)-p(\widetilde{v})) | dx \right)^2.\]
	We use an elementary inequality $\frac{x^2}{2}-y^2\le (x-y)^2$ for $x,y\ge0$ to obtain
	\begin{align}
		\begin{aligned}\label{est-X1}
		\frac{2M^2}{\sigma_m^4}\left(\int_0^1w_1\,dy_1\right)^2-|\dot{X}_1|^2&\le C(\lambda+\delta_0+\e_1)^2\int_0^1|w_1|^2\,dy_1\\
		&\quad +\frac{C}{\delta_1^2} \left(\int_\R |(\widetilde{v}_1)^{X_1}_x | \phi_2 | (p(v)-p(\widetilde{v})) | dx \right)^2.
		\end{aligned}
	\end{align}
The last term can be estimated as follows: by using Lemmas \ref{lem:shock-est}, \ref{lem-rel-quant}, \ref{lem:shock-interact-2}, 	
\begin{align*}
		\frac{C}{\delta_1^2} \left(\int_\R |(\widetilde{v}_1)^{X_1}_x | \phi_2  | (p(v)-p(\widetilde{v})) |  \,dx \right)^2 
		&\leq \frac{C}{\delta_1^2}\int_\R (|(\widetilde{v}_1)^{X_1}_x | \phi_2)^2 dx \int_\R | (p(v)-p(\widetilde{v})) |^2 \, dx \\
		&\leq \frac{C}{\delta_1^2}\sup_{t,x}\big(\phi_2^2 |(\widetilde{v}_1)^{X_1}_x |\big)  \|(\widetilde{v}_1)^{X_1}_x\|_{L^1(\bbr)}  \int_\R Q(v|\tv) \, dx \\
		&\leq C \delta_1 \exp(-C \delta_1 t) \int_\R \eta(U|\widetilde{U}) \, dx.
	\end{align*}	
	Therefore, we substitute the above estimate to \eqref{est-X1} to derive the following estimate on $\dot{X}_1$:
	\begin{align*}
		-\frac{\delta_1}{2M}|\dot{X}_1|^2 &\le -\frac{M\delta_1}{\sigma_m^4}\left(\int_0^1 w_1\,dy_1\right)^2+C\delta_1(\lambda+\delta_0+\e_1)^2\int_0^1|w_1|^2\,dy_1  +C \delta_1^2 \exp(-C \delta_1 t) \int_\R \eta(U|\widetilde{U}) \, dx,
	\end{align*}
	which is the desired estimate \eqref{L2.2}.\\
	
\noindent $\bullet$ {\bf (Estimate of the bad term $\mathcal{B}_1$ and good term $\mathcal{G}_2$):}
Recall
\begin{align*}
	&\mathcal{B}_1(U) := \sum_{i=1}^2\underbrace{\frac{1}{2\sigma_i}\int_{\bbr}(a_i)^{X_i}_x |p(v)-p(\widetilde{v})|^2\,dx}_{=:\mathcal{B}_{i1}},\\
	&\mathcal{G}_2(U):= \sum_{i=1}^2\underbrace{ \sigma_i\int_{\bbr}(a_i)^{X_i}_xQ(v|\widetilde{v})\,dx}_{=:\mathcal{G}_{i2}}.
\end{align*}
Since the estimates for the two cases $i=1,2$ are the same, we only handle the case of $i=2$ for simplicity.\\
First, we use the estimate on $Q(v|\tv)$ in Lemma \ref{lem-rel-quant} to obtain
	\begin{align*}
		\mathcal{G}_{22}&\ge \sigma_2\int_{\bbr} (a_2)^{X_2}_x \frac{p(\widetilde{v}^{X_2}_2)^{-\frac{1}{\gamma}-1}}{2\gamma}|p(v)-p(\widetilde{v})|^2\,dx\\
		&\quad -\sigma_2\int_{\bbr}(a_2)^{X_2}_x\frac{1+\gamma}{3\gamma^2}p(\widetilde{v})^{-\frac{1}{\gamma}-2}(p(v)-p(\widetilde{v}))^3\,dx\\
		& \quad + \frac{\sigma_2}{2\gamma}\int_{\bbr}(a_2)^{X_2}_x \left(p(\widetilde{v})^{-\frac{1}{\gamma}-1}-p(\widetilde{v}^{X_2}_2)^{-\frac{1}{\gamma}-1}\right)|p(v)-p(\widetilde{v})|^2\,dx.
	\end{align*}
For simplicity, let $\widehat{\mathcal{G}}_{22}$ denote the good term given as
	\[\widehat{\mathcal{G}}_{22}:=\sigma_2 \int_\R (a_2)^{X_2}_x \frac{p(\widetilde{v}^{X_2}_2)^{-\frac{1}{\gamma}-1}}{2\gamma}|p(v)-p(\widetilde{v})|^2\,dx.
	\]
Using \eqref{shock_speed_est} and \eqref{shock_speed_est-2}, we have
\begin{align*}
\mathcal{B}_{21} \leq \frac{1}{2\sigma_m} \int_\R (a_2)^{X_2}_x |\pv-p(\tv)|^2 \, dx+ \frac{C \delta_2}{2\sigma_m} \int_\R  (a_2)^{X_2}_x |p(v)-p(\tv)|^2 \, dx
	\end{align*}
	and
	\begin{align*}
		\widehat{\mathcal{G}}_{22} \geq \frac{1}{2\sigma_m}(1-C \delta_2) \int_\R (a_2)^{X_2}_x |\pv-\tpv|^2 \, dx.
	\end{align*}
Then, using $\phi_1+\phi_2=1$ and \eqref{avd}, we estimate
	\begin{align}
	\begin{aligned}\label{B21G22}
		\mathcal{B}_{21} - \widehat{\mathcal{G}}_{22}
		&\leq C\delta_2 \int_\R (a_2)^{X_2}_x |\pv-p(\tv)|^2 \, dx\\
		&\le C\delta_2\lambda \int_{\bbr} \frac{|p(\tv_2^{X_2})_x|}{\delta_2} |\phi_2(p(v)-p(\widetilde{v}))|^2\,dx+  C\lambda \int_{\bbr} |(\tv_2)^{X_2}_x| \phi_1^2|p(v)-p(\widetilde{v})|^2\,dx.
	\end{aligned}
	\end{align}
By change of variable $x\to x+X_2(t)$, the first term of the right-hand side of \eqref{B21G22} is rewritten in the new variables $w_2, y_2$:
\[
C\delta_2\lambda \int_{\bbr} \frac{|p(\tv_2^{X_2})_x|}{\delta_2} |\phi_2(p(v)-p(\widetilde{v}))|^2\,dx = C\delta_2\lambda\int_0^1 |w_2|^2\,dy_2.
\]	
Using Lemma \ref{lem:shock-interact-2}, the last term of \eqref{B21G22} can be estimated as
\begin{align*}
C\lambda \int_{\bbr} |(\tv_2)^{X_2}_x| \phi_1^2|p(v)-p(\widetilde{v})|^2\,dx \leq C \lambda  \delta^2_2 \exp(-C \delta_2 t) \int_\R \eta(U|\widetilde{U}) \, dx.
\end{align*}
Hence we have
	\[\mathcal{B}_1-\widehat{\mathcal{G}}_{2}\le \sum_{i=1}^2 C\lambda\delta_i\int_0^1|w_i|^2\,dy_i+C\lambda\delta_i^2\exp(-C\delta_it)\int_{\R}\eta(U|\widetilde{U})\,dx.\]

\noindent $\bullet$ {\bf (Estimate of the bad term $\mathcal{B}_2$):}
Recall
\begin{align*}
	&\mathcal{B}_2(U) := \sum_{i=1}^2 \underbrace{\sigma_i\int_\bbr a(\widetilde{v}^{X_i}_i)_xp(v|\widetilde{v})\,dx}_{=:\mathcal{B}_{i2}}.
\end{align*}
Again, we only handle the case of $i=2$ for simplicity.
First, we have
\begin{align*}
	\mathcal{B}_{22}= \sigma_2\int_{\R}a(\tv_2)^{X_2}_x\phi_2^2p(v|\tv)\,dx+\sigma_2\int_{\R}a(\tv_2)^{X_2}_x(1-\phi_2^2)p(v|\tv)\,dx.
\end{align*}
For the first term, using Lemma \ref{lem-rel-quant}, \eqref{shock_speed_est}, \eqref{shock_speed_est-2} and then the change of variables, we obtain
\begin{align*}
	&\sigma_2\int_{\R}a(\tv_2)^{X_2}_x\phi_2^2p(v|\tv)\,dx =\sigma_2\int_{\R}a \frac{p(\tv_2^{X_2})_x}{p'(\tv_2^{X_2})}\phi_2^2p(v|\tv) \,dx\\
	&\le \sigma_2\int_{\R}a \frac{|p(\tv_2^{X_2})_x|}{|p'(\tv_2^{X_2})|}\phi_2^2 \left(\frac{\gamma+1}{2\gamma p(\tv)}+C\e_1\right) | p(v)-p(\tv)|^2 \,dx\\
	&\le \frac{\gamma+1}{2\gamma\sigma_mp(v_m)}(1+C(\delta_0+\lambda+\e_1)) \int_\bbr |p(\tv_2^{X_2})_x|  | \phi_2(p(v)-p(\tv))|^2 \,dx \\
		&\le\delta_2\alpha_m(1+C(\delta_0+\lambda+\e_1))\int_{0}^1|w_2|^2\,dy_2,
\end{align*}
where we used the simple notation $\alpha_m=\frac{\gamma+1}{2\gamma\sigma_mp(v_m)}$.\\
Again, the last interaction term of $\cB_{22}$ can be estimated as
\begin{align*}
	\sigma_2 \int_{\bbr}a(\widetilde{v}^{X_2}_2)_x(1+\phi_2)\phi_1p(v|\widetilde{v})\,dx &\leq C \int_\R |(\widetilde{v}_2)^{X_2}_x| \phi_1 |\pv-p(\tv)|^2 \, dx \\
	&\leq C \delta_2^2 \exp(-C \delta_2 t) \int_\R \eta(U|\widetilde{U}) \, dx .
\end{align*}
Therefore, we have
\begin{align*}
\mathcal{B}_{22}\le \delta_2\alpha_m(1+C(\delta_0+\lambda+\e_1))\int_{0}^1|w_2|^2\,dy_2+ C \delta_2^2 \exp(-C \delta_2 t) \int_\R \eta(U|\widetilde{U}) \, dx,
\end{align*}
which yields
\[\mathcal{B}_2\le \sum_{i=1}^2\delta_i\alpha_m(1+C(\delta_0+\lambda+\e_1))\int_0^1|w_i|^2\,dy_i+C\delta_i^2 \exp(-C\delta_it)\int_{\R}\eta(U|\widetilde{U})\,dx.\]

Hence, combining the above estimates on $\cB_1,\cG_2$ and $\cB_2$, we have
\begin{align}
\begin{aligned} \label{L2.3}
\mathcal{B}_1-\mathcal{G}_{2} + \mathcal{B}_2	 &\le
\sum_{i=1}^2\delta_i\alpha_m(1+C(\delta_0+\lambda+\e_1))\int_0^1|w_i|^2\,dy_i+C\delta_i^2 \exp(-C\delta_it)\int_{\R}\eta(U|\widetilde{U})\,dx \\
&\quad + \sum_{i=1}^2\Bigg[-\sigma_i\int_{\R}(a_i)^{X_i}_x\frac{1+\gamma}{3\gamma^2}p(\tv)^{-\frac{1}{\gamma}-2}(p(v)-p(\tv))^3\,dx \\
		&\hspace{2cm} +\frac{\sigma_i}{2\gamma}\int_{\R}(a_i)^{X_i}_x\left(p(\tv)^{-\frac{1}{\gamma}-1}-p(\tv_i^{X_i})^{-\frac{1}{\gamma}-1}\right)|p(v)-p(\tv)|^2\,dx\Bigg].
		\end{aligned}
	\end{align}

\noindent $\bullet$ {\bf (Estimate of the diffusion term $\mathcal{D}(U)$):}
First of all, using the fact that $\phi_{1}+\phi_{2}=1$ and $1\ge \phi_{i}\ge \phi_{i}^2\ge0$ for any $i$, we separate $\mathcal{D}(U) $ into
\[
\mathcal{D}(U)  = \int_\bbr \frac{a}{\gamma p(v)} (\phi_{1}+\phi_{2}) |\partial_x \big(p(v)-p(\tv)\big)|^2 dx \ge \sum_{i=1}^2 \int_\bbr \frac{a}{\gamma p(v)}  \phi_{i}^2 |\partial_x \big(p(v)-p(\tv)\big)|^2 dx .
\]
Since Young's inequality yields: for any $\delta_*>0$ small enough,
\begin{align*}
\int_\bbr \frac{a}{\gamma p(v)} |\partial_x \big(\phi_{i}(p(v)-p(\tv))\big)|^2 dx &\le (1+\delta_*) \int_\bbr \frac{a}{\gamma p(v)}  \phi_{i}^2 |\partial_x \big(p(v)-p(\tv)\big)|^2 dx \\
&\quad + \frac{C}{\delta_*}  \int_\bbr\frac{a}{\gamma p(v)}  |\partial_x \phi_{i}|^2 |p(v)-p(\tv)|^2 dx,
\end{align*}
we have
\begin{align*}
	-\mathcal{D}(U) & \le - \frac{1}{1+\delta_*} \sum_{i=1}^2 \int_\bbr \frac{a}{\gamma p(v)} |\partial_x \big(\phi_{i}(p(v)-p(\tv))\big)|^2 dx + \frac{C}{\delta_*}  \int_\bbr\frac{a}{\gamma p(v)}  |\partial_x \phi_{i}|^2 |p(v)-p(\tv)|^2 dx\\
	& =: J_1+J_2.
\end{align*}
To write $J_1$ in terms of the variables $y_i, w_i$, we use the following estimates in the proof of \cite[Lemma 4.5]{KVW3}:
\[
\left|\frac{1}{y_i(1-y_i)}\frac{1}{\gamma p(\tv_i)}\frac{dy_i}{d x}-\frac{\delta_ip''(v_m)}{2|p'(v_m)|^2\sigma_m}\right|\le C\delta_i^2.
\]
This with $\left\|a\frac{p(\tv_i)}{p(v)}-1\right\|_{L^\infty}\le C(\delta_0+\e_1+\lambda)$ yields
\begin{align*}
	J_1&=  - \frac{1}{1+\delta_*} \sum_{i=1}^2 \int_0^1 a \frac{p(\tv_i)}{p(v)} \frac{1}{\gamma p(\tv_i)}|\partial_{y_i}w_i|^2 \left(\frac{dy_i}{dx}\right)\,dy_i\\
	&\le -  \sum_{i=1}^2(1-C(\delta_0+\e_1+\lambda+\delta_*))\left(\frac{\delta_ip''(v_m)}{2|p'(v_m)|^2\sigma_m}-C\delta_i^2\right)\int_0^1 y_i(1-y_i)|\pa_{y_i}w_i|^2\,d y_i\\
	&\le -\sum_{i=1}^2\delta_i\alpha_m (1-C(\delta_0+\e_1+\lambda+\delta_*))\int_0^1 y_i(1-y_i)|\pa_{y_i}w_i|^2\,dy_i,
\end{align*}
where we used $\frac{p''(v_m)}{2|p'(v_m)|^2\sigma_m} = \frac{\gamma+1}{2\gamma\sigma_mp(v_m)}=\alpha_m$. \\
To estimate the term $J_2$, we use the following estimate: for each $i=1, 2$,
\beq\label{phidecay}
|\partial_x \phi_{i}(t,x)| \le \frac{4}{\sigma_2- \sigma_1} \frac{1}{t},\quad\forall x\in\bbr, \quad t\in (0, T].
\eeq
Indeed, \eqref{fsx12} yields
\[
\frac{1}{2}\Big( (X_2(t)+ \sigma_2 t) - (X_1(t)+ \sigma_1 t) \Big) \ge \frac{ \sigma_2-  \sigma_1}{4} t >0, \quad t\in (0, T],
\]
which implies \eqref{phidecay}.\\
Thus,
\[
J_2 \le \frac{C}{\delta_*t^2} \int_\R \eta(U|\widetilde{U}) \, dx,
\]
which implies that for any $0<\delta_*<1$ small enough (to be determined below),
		\begin{align}
	\begin{aligned}\label{L2.4}
		-\mathcal{D}(U)\le -\sum_{i=1}^2\delta_i \alpha_m(1-C(\delta_0+\lambda+\e_1+\delta_*))\int_0^1y_i(1-y_i)|\pa_{y_i}w_i|^2\,dy_i + \frac{C}{\delta_* t^2}\int_{\bbr}\eta(U|\tU)\,dx.
		\end{aligned}
	\end{align}

\noindent $\bullet$ {\bf (Conclusion):}
Combining the estimates \eqref{L2.3} and \eqref{L2.4}, we have
\begin{align*}
	\mathcal{B}_1&+\mathcal{B}_2-\widehat{\mathcal{G}_{2}}-\frac{3}{4}\mathcal{D}\\
	&\le \sum_{i=1}^2\delta_i\alpha_m\left((1+C(\delta_0+\lambda+\e_1))\int_0^1|w_i|^2\,dy_i-\frac{3}{4}(1-C_0(\delta_0+\lambda+\e_1+\delta_*))\int_0^1y_i(1-y_i)|\pa_{y_i}w_i|^2\,dy_i\right)\\
	&\quad +C\left(\sum_{i=1}^2 \delta_i^2 \exp(-C \delta_i t) + \frac{1}{\delta_*t^2}  \right) \int_\R \eta (U|\widetilde{U})\,dx.
\end{align*}
Now, choose $\delta_*$ as
\[
\delta_* = \frac{1}{12C_0},
\]
which together with the smallness of $\delta_0,\lambda,\e_1$ yields
\[
C_0(\delta_0+\lambda+\e_1+\delta_*) <\frac{1}{6},
\]
and
\begin{align*}
	\mathcal{B}_1&+\mathcal{B}_2-\widehat{\mathcal{G}_{2}}-\frac{3}{4}\mathcal{D}\\
	&\le  \sum_{i=1}^2\delta_i\alpha_m\left(\frac{9}{8}\int_0^1|w_i|^2\,dy_i-\frac{5}{8}\int_0^1y_i(1-y_i)|\pa_{y_i}w_i|^2\,dy_i\right) \\
	&\quad +C\left(\sum_{i=1}^2 \delta_i \exp(-C \delta_i t) + \frac{1}{ t^2}  \right) \int_\R \eta (U|\widetilde{U})\,dx .
\end{align*}
Then, using Lemma \ref{lem-poin} with the identity:
\[
\int_0^1 |w-\bar w|^2 dy = \int_0^1w^2 dy -{\bar w}^2,\qquad \bar w:=\int_0^1 w dy,
\]
we have
\begin{align*}
	\mathcal{B}_1+\mathcal{B}_2-\widehat{\mathcal{G}_{2}}-\frac{3}{4}\mathcal{D}
	&\le \sum_{i=1}^2\left[-\frac{\delta_i\alpha_m}{8}\int_0^1|w_i|^2\,dy_i +\frac{5\delta_i\alpha_m}{4}\left(\int_0^1w_i\,dy_i\right)^2\right]\\
	&\quad +C\left(\sum_{i=1}^2 \delta_i^2 \exp(-C \delta_i t) + \frac{1}{t^2}  \right) \int_\R \eta (U|\widetilde{U})\,dx.
\end{align*}
Finally, using \eqref{L2.2} with the choice $M=\frac{5}{4}\sigma_m^4\alpha_m$, we have
\begin{align*}
	\begin{aligned}
	&-\sum_{i=1}^2\frac{\delta_i}{2M}|\dot{X}_i|^2+\mathcal{B}_1+\mathcal{B}_2-\mathcal{G}_2-\frac{3}{4}\mathcal{D}\\
	&\le \sum_{i=1}^2\Bigg[-\frac{\delta_i\alpha_m}{16}\int_0^1|w_i|^2\,dy_i+\sigma_i\int_{\bbr}(a_i)^{X_i}_x\frac{1+\gamma}{3\gamma^2}p(\widetilde{v})^{-\frac{1}{\gamma}-2}(p(v)-p(\widetilde{v}))^3\,dx\\
	&\hspace{2cm} -\frac{\sigma_i}{2\gamma}\int_{\bbr}(a_i)^{X_i}_x \left(p(\widetilde{v})^{-\frac{1}{\gamma}-1}-p(\widetilde{v}^{X_i}_i)^{-\frac{1}{\gamma}-1}\right)|p(v)-p(\widetilde{v})|^2\,dx\Bigg]\\
	&\quad+C\left(\sum_{i=1}^2 \delta_i^2 \exp(-C \delta_i t) + \frac{1}{t^2}  \right) \int_\R \eta (U|\widetilde{U})\,dx,
	\end{aligned}
\end{align*}
which implies 
\begin{align}
	\begin{aligned}\label{est-I1}
	\mathcal{R}_1
	&\le \sum_{i=1}^2\Bigg[-C_1 \int_\R |(\widetilde{v}_i)^{X_i}_x| |\phi_i(p(v)-p(\widetilde{v})) |^2 \, dx
	+C \int_\R |(a_i)^{X_i}_x| |p(v)-p(\widetilde{v})|^3 \, dx\\
	&\hspace{1.5cm}+C \int_{\bbr} |(a_i)^{X_i}_x| |\tv-\tvi^{X_i}| |p(v)-p(\widetilde{v})|^2\,dx\Bigg]\\
	&\quad+C\left(\sum_{i=1}^2 \delta_i^2\exp(-C \delta_i t) + \frac{1}{t^2}  \right) \int_\R \eta (U|\widetilde{U})\,dx.
	\end{aligned}
\end{align}

\subsection{Estimate of the remaining terms}\label{sec:4.6}
We substitute \eqref{est-I1} to \eqref{est} and use the Young's inequality
\beq\label{2young}
\sum_{i=1}^2\left(\dot{X}_i\sum_{j=3}^6Y_{ij}\right)\le \sum_{i=1}^2\frac{\delta_i}{4M}|\dot{X}_i|^2 +\sum_{i=1}^2\frac{C}{\delta_i}\sum_{j=3}^6|Y_{ij}|^2
\eeq
to have
\begin{align}
	\begin{aligned}\label{est-2}
	&\frac{d}{dt}\int_{\bbr}a\eta(U|\widetilde{U})\,dx \\
	&\leq-C_1\mathcal{G}^S + \mathcal{K}_1+\mathcal{K}_2 \\
	&\quad +C\left(\sum_{i=1}^2 \delta_i^2 \exp(-C \delta_i t) + \frac{1}{t^2}  \right) \int_\R \eta (U|\widetilde{U})\,dx \\
	&\quad  -\sum_{i=1}^2\frac{\delta_i}{4M}|\dot{X}_i|^2 + \sum_{i=1}^2 \frac{C}{\delta_i}\sum_{j=3}^6|Y_{ij}|^2 +\sum_{i=3}^5\mathcal{B}_i +\mathcal{S}_1+\mathcal{S}_2-\mathcal{G}_1-\frac{1}{4}\mathcal{D},
	\end{aligned}
\end{align}
where
\begin{align*}
&\mathcal{G}^S:=\sum_{i=1}^2 \int_\R |(\widetilde{v}_i)^{X_i}_x| |\phi_i(p(v)-p(\widetilde{v})) |^2 dx \\
&\mathcal{K}_1:=\sum_{i=1}^2 \underbrace{C \int_\R |(a_i)^{X_i}_x| |p(v)-p(\widetilde{v})|^3 dx}_{=: \mathcal{K}_{i1}} \\
 &\mathcal{K}_2:=\sum_{i=1}^2 \underbrace{C \int_\R |(a_i)^{X_i}_x| |\tv-\tvi^{X_i}| |p(v)-p(\widetilde{v})|^2\,dx}_{=: \mathcal{K}_{i2}} 
\end{align*}

In what follows,  to control the remaining terms,  we will use the good terms $\cG_1$,$\cG^S$ and the diffusion term $\cD$.\\

\noindent $\bullet$ {\bf (Estimate of $\mathcal{K}_1$):}
 To control the term $\mathcal{K}_{i1}$ by the above good terms, we localize it via $\phi_i$ as
\begin{equation}\label{Ki1}
\mathcal{K}_{i1}
\leq C \frac{\lambda}{\delta_i} \int_{\R} |(\widetilde{v}_i)^{X_i}_x| \phi_i |p(v)-p(\widetilde{v})|^3  \,dx+C \frac{\lambda}{\delta_i} \int_{\R} |(\widetilde{v}_i)^{X_i}_x| (1-\lpi) |p(v)-p(\widetilde{v})|^3  \,dx.
\end{equation}
Using the Gagliardo-Nirenberg interpolation inequality and $\delta_i\ll\lambda\le C\sqrt{\delta_i}$, the first term is controlled by the good terms as
\begin{align*}
&C \frac{\lambda}{\delta_i} \int_{\R} |(\widetilde{v}_i)^{X_i}_x| \phi_i |p(v)-p(\widetilde{v})|^3  \,dx\\
&\leq C \frac{\lambda}{\delta_i} \norm{\pv-p(\tv)}^2_{L^\infty(\R)} \sqrt{\int_{\R} |(\tv_i)^{X_i}_x| |\lpi(\pv-p(\tv))|^2 \,dx}\sqrt{\int_{\R} |(\tv_i)^{X_i}_x| \,dx} \\
&\leq C \frac{\lambda}{\sqrt{\delta_i}} \norm{\partial_x(\pv-p(\tv))}_{L^2(\R)}\norm{\pv-p(\tv)}_{L^2(\R)} \sqrt{\int_{\R} |(\tv_i)^{X_i}_x| |\lpi(\pv-p(\tv))|^2 \,dx}\\
&\leq C \varepsilon_1  \norm{\partial_x(\pv-p(\tv))}_{L^2(\R)}\sqrt{\int_{\R} |(\tv_i)^{X_i}_x| |\lpi(\pv-p(\tv))|^2 \,dx} \\
&\leq C \varepsilon_1  \norm{\partial_x(\pv-p(\tv))}^2_{L^2(\R)}+C \varepsilon_1 \int_{\R} |(\widetilde{v}_i)^{X_i}_x| | \phi_i(p(v)-p(\widetilde{v}))|^2 \, dx 
\le \frac{1}{48}(\mathcal{D}+C_1 \mathcal{G}^S )
\end{align*}
Using Lemma \ref{lem:shock-interact-2}, the second term in \eqref{Ki1} is estimated as
\begin{align*}
C \frac{\lambda}{\delta_i} \int_{\bbr} |(\widetilde{v}_i)^{X_i}_x| (1-\lpi) |p(v)-p(\widetilde{v})|^3 \, dx &\leq C \eps_1\lambda \delta_i \exp(-C \delta_i t) \int_\R |\pv-p(\tv)|^2 \, dx\\
&\leq C \eps_1\lambda \delta_i \exp(-C \delta_i t)\int_\R  \eta (U|\widetilde{U}) \, dx.
\end{align*}
Combining the estimates for $i=1,2$, we derive
\begin{align*}
\mathcal{K}_1\leq  \frac{1}{24}(\mathcal{D}+C_1 \mathcal{G}^S ) + C\sum_{i=1}^2  \eps_1 \lambda \delta_i \exp(-C \delta_i t)\int_\R  \eta (U|\widetilde{U}) \, dx.
\end{align*}

\noindent $\bullet$ {\bf (Estimate of $\mathcal{K}_2$):}
To estimate $\mathcal{K}_{2}$, we use Lemma \ref{lem:shock-interact-1} to obtain
\begin{align*}
\mathcal{K}_{i2}
&=C\int_\R |(\ai)^{X_i}_x| |\tv-\tv^{X_i}_i| |\pv-p(\tv)|^2 \, dx\\
&\leq C \frac{\lambda}{\delta_i} \int_\R |(\tv_i)^{X_i}_x||\tv-\tv^{X_i}_i| |\pv-p(\tv)|^2 dx\\
&\leq C\lambda \delta_1 \delta_2 \exp(-C \min (\delta_1, \delta_2) t) \int_\R \eta(U |\widetilde{U}) \, dx,
\end{align*}
which completes the estimate on $\mathcal{K}_2$:
\[\mathcal{K}_2\le C\lambda\delta_1\delta_2\exp(-C\min\{\delta_1,\delta_2\}t)\int_{\R}\eta(U|\tU)\,dx.\]

\noindent $\bullet$ {\bf (Estimate of $\frac{C}{\delta_i} |Y_{ij}|^2$ for $i=1,2, j=3,\ldots, 6$):}
Using \eqref{viscous-shock-h}$_2$ and \eqref{inta}, we estimate $Y_{i3}$ as
\begin{align*}
|Y_{i3}|&=\left| \frac{1}{\sigma_i} \int_\R a p(\tv_i^{X_i})_x \left(h-\widetilde{h}-\frac{\pv-p(\tv)}{\sigma_i} \right) \, dx \right| \\
&\leq C \frac{\delta_i}{\lambda} \int_\R \left|(\ai)^{X_i}_x\right| \left| h-\widetilde{h}-\frac{\pv-p(\tv)}{\sigma_i} \right| \, dx \leq C \frac{\delta_i}{\sqrt{\lambda}} \sqrt{\cG_1},
\end{align*}
which gives
\[
\frac{C}{\delta_i} |Y_{i3}|^2 \le\frac{C\delta_i}{\lambda}\mathcal{G}_{1}.
\]
Using Lemma \ref{lem:shock-interact-1}, we control $Y_{i4}$ as
\begin{align*}
|Y_{i4}| 
&\leq  C \int_\R |\tv-\tv^{X_i}_i| |(\tv_i)^{X_i}_x| |\pv-p(\tv)| \, dx\\
&\leq C\sqrt{\int_\R (|\tv-\tv^{X_1}_1||(\tv_1)^{X_1}_x|)^2 \, dx} \sqrt{\int_\R \eta(U |\widetilde{U}) \, dx} \\
&\le  C\sqrt{\delta_i} \delta_1 \delta_2 \exp(-C \min (\delta_1, \delta_2) t)  \sqrt{\int_\R \eta(U |\widetilde{U}) \, dx},
\end{align*}
which yields
\[
\frac{C}{\delta_i} |Y_{i4}|^2 \le  C\delta_1^2 \delta_2^2 \exp(-C \min (\delta_1, \delta_2) t) \int_\R \eta(U |\widetilde{U}) \, dx.
\]
For $Y_{i5}$,  we first estimate $h-\widetilde{h}$ in terms of $u-\tu$ and $v-\tv$ as follows.  Using $\tv_x=(\tv_1)^{X_1}_x+(\tv_2)^{X_2}_x$ and $C^{-1}\le v, \tv^{X_1}_1, \tv^{X_2}_2 \le C$,  we have
\begin{align}
\begin{aligned}\label{h-est}
|h-\widetilde{h}| &\leq |u-\tu| + |(\ln v)_x+(\ln \tv^{X_1}_1)_x - (\ln \tv^{X_2}_2)_x|\\
&= |u-\tu|+\left|\frac{v_x}{v}-\frac{(\tv_1)^{X_1}_x}{\tv^{X_1}_1}-\frac{(\tv_2)^{X_2}_x}{\tv^{X_2}_2}\right|\\
&\leq |u-\tu|+\left|\frac{v_x-\tv_x}{v}\right|+\left|\left(\frac{1}{v}-\frac{1}{\tv}\right)\tv_x\right|+\left|\left(\frac{1}{\tv}-\frac{1}{\tv^{X_1}_1}\right)(\tv_1)^{X_1}_x\right|+\left|\left(\frac{1}{\tv}-\frac{1}{\tv^{X_2}_2}\right)(\tv_2)^{X_2}_x\right|\\
&\leq |u-\tu|+C(|(v-\tv)_x|+|v-\tv||\tv_x| +|\tv-\tv^{X_1}_1||(\tv_1)^{X_1}_x|+|\tv-\tv^{X_2}_2||(\tv_2)^{X_2}_x|).
\end{aligned}
\end{align}
Then, Lemma \ref{lem:shock-interact-1} implies
\begin{align*}
\lVert h-\widetilde{h} \rVert_{L^2(\R)} 
&\leq C \Bigg( \norm{u-\tu}_{L^2(\R)}+\norm{v-\tv}_{H^1(\R)}\\
&\hspace{1.5cm}+\sqrt{\int_\R (|\tv-\tv^{X_1}_1||(\tv_1)^{X_1}_x|)^2 \, dx}+\sqrt{\int_\R (|\tv-\tv^{X_2}_2|
|(\tv_2)^{X_2}_x|)^2 \, dx} \Bigg)\\
&\leq C \left( \norm{u-\tu}_{L^2(\R)}+\norm{v-\tv}_{H^1(\R)}+C \delta_1^{3/2} \delta_2+C \delta_1 \delta_2^{3/2} \right),
\end{align*}
and therefore,
\beq\label{normh}
 \lVert h-\widetilde{h} \rVert_{L^\infty(0,T;L^2(\R))} \leq C(\varepsilon_1+C \delta_1^{3/2} \delta_2+C \delta_1 \delta_2^{3/2}).
 \eeq
Hence, we estimate $Y_{i5}$ as  
\begin{align*}
|Y_{i5}|^2 
&\leq C  \left(\frac{\sigma_i}{2}\int_\R (\ai)^{X_i}_x \left| h-\widetilde{h}-\frac{\pv-p(\tv)}{\sigma_i}\right|^2 \, dx\right)\left(\int_\R (\ai)^{X_i}_x\left| h-\widetilde{h}+\frac{\pv-p(\tv)}{\sigma_i}\right|^2 \, dx\right) \\
&\leq C \cG_{1i} \norm{(\ai)^{X_i}_x}_{L^\infty} \left( \lVert{h-\widetilde{h} \rVert}_{L^\infty(0,T;L^2(\R))}+\lVert{v-\widetilde{v} \rVert}_{L^\infty(0,T;L^2(\R))} \right)^2\\
&\leq C\lambda\delta_i (\e_1+C\delta_1\delta_2)^2 \mathcal{G}_{1},
\end{align*}
which yields
\[
\frac{C}{\delta_i} |Y_{i5}|^2 \le C\lambda (\e_1+C\delta_1\delta_2)^2 \mathcal{G}_{1}.
\]
Finally, using Lemma \ref{lem-rel-quant} (3), $\lambda\le C\sqrt{\delta_i}$ and Lemma \ref{lem:shock-interact-2}, we estimate $Y_{i6}$ as
\begin{align*}
\frac{C}{\delta_i} |Y_{i6}|^2 &\leq \frac{C}{\delta_i} \left( \int_\R |(\ai)^{X_i}_x| |\pv-p(\tv)|^2 \, dx\right)^2 \leq \frac{C \lambda^2}{\delta_i^3} \left( \int_\R |(\tvi)^{X_i}_x| |\pv-p(\tv)|^2 \, dx\right)^2 \\
&\leq \frac{C \lambda^2} {\delta_i} \norm{\pv-p(\tv)}^2_{L^2(\R)}  \int_\R |(\tv_i)^{X_i}_x| |\pv-p(\tv)|^2 \, dx \\
&\leq C \varepsilon_1^2  \int_\R |(\tv_i)^{X_i}_x| |\pv-p(\tv)|^2 \, dx\\
&= C \varepsilon_1^2 \int_\R |(\tv_i)^{X_i}_x| | \lpi(\pv-p(\tv))|^2 \, dx+ C \varepsilon_1^2 \int_\R |(\tv_i)^{X_i}_x|(1-\lpi^2) | \pv-p(\tv)|^2 \, dx\\
&\leq \frac{C_1}{48} \cG^S_i+C \varepsilon_1^2 \int_\R |(\tv_i)^{X_i}_x| (1-\phi_i) |\pv-p(\tv)|^2 \, dx\\
&\le\frac{C_1}{48} \cG^S_i+C \varepsilon_1^2 \delta_i^2 \exp(-C \delta_i t) \int_\R \eta(U|\widetilde{U}) \, dx.
\end{align*}

Combining all the estimates for $Y_{ij}$ and use the smallness of the parameters, we conclude that
\begin{align*}
	\sum_{i=1}^2\left(\frac{C}{\delta_i}\sum_{j=3}^6|Y_{ij}|^2\right)&\le \sum_{i=1}^2\bigg(\frac{C\delta_i}{\lambda}\mathcal{G}_{1} +C\lambda(\e_1+C\delta_1\delta_2 )^2\mathcal{G}_{1}\\
	&\quad+\frac{C_1}{48}\mathcal{G}^S 
	+C\Big(\delta_1^2 \delta_2^2 \exp(-C \min (\delta_1, \delta_2) t) + \e_1^2\delta_i^2\exp(-C\delta_it) \Big)\int_{\R}\eta(U|\tU)\,dx\\
	&\le \frac{1}{4}\mathcal{G}_1 + \frac{C_1}{24}\mathcal{G}^S\\
	&\quad+C \Big(\delta_1^2 \delta_2^2 \exp(-C \min (\delta_1, \delta_2) t) + \e_1^2\sum_{i=1}^2\delta_i^2\exp(-C\delta_it) \Big)\int_{\R}\eta(U|\tU)\,dx.
\end{align*}

\noindent $\bullet$ {\bf (Estimate of $\mathcal{B}_3$): }
Recall $\mathcal{B}_3:= \cB_{13}+\cB_{23}$ where
\[
\cB_{i3}: =-\int_{\bbr}(a_i)^{X_i}_x\frac{p(v)-p(\widetilde{v})}{\gamma p(v)}\pa_x(p(v)-p(\widetilde{v}))\,dx.
\]
As done before, using Lemma \ref{lem:shock-interact-2}, we estimate $\cB_{i3}$ as
\begin{align*}
|\cB_{i3}| 
&\leq \frac{1}{48} \cD+C\left(\frac{\lambda}{\delta_i}\right)^2 \int_\R |(\tv_i)^{X_i}_x|^2 |\pv-p(\tv)|^2 \, dx\\
&\leq \frac{1}{48} \cD+C\lambda^2 \int_\R |(\tv_i)^{X_i}_x| |\phi_i(\pv-p(\tv))|^2 \, dx +C \lambda^2 \int_\R |(\tv_i)^{X_i}_x|(1-\lpi) |\pv-p(\tv)|^2 \, dx\\
&\le\frac{1}{48}(\cD+C_1 \cG^S)+C \lambda^2 \delta_i^2 \exp(-C \delta_i t) \int_\R \eta(U | \widetilde{U}) \, dx.
\end{align*}
Therefore, we have
\begin{align*}
\cB_3 \leq \frac{1}{24}(\cD +C_1 \cG^S)+C\lambda^2 \sum_{i=1}^2 \delta_i^2 \exp(-C \delta_i t)\int_\R \eta(U | \widetilde{U}) \, dx.
\end{align*}

\noindent $\bullet$ {\bf(Estimate of $\cB_4$): }
Recall $\mathcal{B}_4:= \cB_{14}+\cB_{24}$ where
\[
\cB_{i4}: =-\int_{\bbr}(a_i)^{X_i}_x(p(v)-p(\widetilde{v}))^2\frac{\pa_xp(\widetilde{v})}{\gamma p(v)p(\widetilde{v})}\,dx.
\]
Likewise, we estimate $\cB_{i4}$ by using Lemma \ref{lem:shock-interact-2} with \eqref{inta} and $|\partial_x p(\tv)| \leq C \left(|(\tv_1)^{X_1}_x|+|(\tv_2)^{X_2}_x| \right)$ as
\begin{align*}
|\cB_{i4}| 
&\leq  C \lambda \delta_i \sum_{j=1}^2 \int_\R |(\tv_j)^{X_j}_x||\pv-p(\tv)|^2 \, dx\\
 &\le C \lambda \delta_i \cG^S + C \lambda \delta_i \sum_{j=1}^2 \int_\R |(\tv_j)^{X_j}_x| |1-\phi_j| |\pv-p(\tv)|^2 \, dx\\
 &\leq \frac{C_1}{12} \cG^S+ C \lambda \delta_i \sum_{j=1}^2  \delta_j^2 \exp(-C \delta_j t) \int_\R \eta(U | \widetilde{U}) \, dx .
\end{align*}
Therefore, we bound $\mathcal{B}_4$ as
\begin{align*}
|\cB_4| \leq \frac{C_1}{6} \cG^S+ C\sum_{j=1}^2  \delta_j^2 \exp(-C \delta_j t) \int_\R \eta(U | \widetilde{U}) \, dx .
\end{align*}

\noindent $\bullet${\bf (Estimate for $\mathcal{B}_5$): }
As in $\mathcal{B}_{i3}$, we have
\begin{align*}
|\cB_{5}| 
&\leq \frac{1}{48} \cD+C \int_\R \left( |(\tv_1)^{X_1}_x|^2+|(\tv_2)^{X_2}_x|^2 \right) |\pv-p(\tv)|^2 \, dx\\
& \leq \frac{1}{48} \cD+\frac{C_1}{12}\cG^S+C(\delta_1^2 \exp(-C \delta_1 t)+\delta_2^2 \exp(-C \delta_2 t)) \int_\R \eta (U |\widetilde{U}) \, dx.
\end{align*}

\noindent $\bullet${\bf (Estimate for the interaction terms $\mathcal{S}_1$ and $\mathcal{S}_2$): }
We first estimate $E_1$ and $E_2$. We use $(\tv)_x=(\tv_1)^{X_1}_x+(\tv_2)^{X_2}_x$ to estimate $E_1$ as
\begin{align*}
E_1&=\partial_x \left( \frac{(\tv)_x}{\tv}-\frac{(\tv_1)^{X_1}_x}{\tv^{X_1}_1}-\frac{(\tv_2)^{X_2}_x}{\tv^{X_2}_2}\right)=\partial_x \left( \left(\frac{1}{\tv}-\frac{1}{\tv_1^{X_1}}\right)(\tv_1)^{X_1}_x+\left(\frac{1}{\tv}-\frac{1}{\tv^{X_2}_2}\right)(\tv_2)^{X_2}_x\right)\\
&=\left(\frac{1}{\tv}-\frac{1}{\tv^{X_1}_1}\right)(\tv_1)^{X_1}_{xx}+\left(\frac{1}{\tv}-\frac{1}{\tv^{X_2}_2}\right)(\tv_2)^{X_2}_{xx}\\
&\quad +\left(\frac{1}{(\tv^{X_1}_1)^2}-\frac{1}{(\tv)^2}\right)|(\tv_1)^{X_1}_x|^2+\left(\frac{1}{(\tv^{X_2}_2)^2}-\frac{1}{(\tv)^2}\right)|(\tv_2)^{X_2}_x|^2-2\frac{(\tv_1)^{X_1}_x(\tv_2)^{X_2}_x}{(\tv)^2}.
\end{align*}
Therefore, thanks to \eqref{second-order}, $E_1$ is bounded as
\begin{align*}
|E_1| &\leq C \delta_1 |(\tv_1)^{X_1}_x| |\tv-\tv^{X_1}_1|+C \delta_2 |(\tv_2)^{X_2}_x| |\tv-\tv^{X_2}_2|+C|(\tv_1)^{X_1}_x||(\tv_2)^{X_2}_x|.
\end{align*}
Similarly, we estimate $E_2$ as
\begin{align*}
|E_2| &\leq |p'(\tv)-p'(\tv^{X_1}_1)||(\tv_1)^{X_1}_x|+|p'(\tv)-p'(\tv^{X_2}_2)||(\tv_2)^{X_2}_x| \\
&\leq C|(\tv_1)^{X_1}_x||\tv-\tv_1^{X_1}|+C|(\tv_2)^{X_2}_x||\tv-\tv_2^{X_2}|.
\end{align*}
Therefore, the above estimates yield
\begin{align*}
|\cS_1|+|\cS_2| 
&\leq C \sum_{i=1}^2 \bigg(\int_\R |(\tv_i)^{X_i}_x||\tv-\tv^{X_i}_i| \big( |\pv-p(\tv)| + |h-\widetilde{h} | \big) \, dx  \bigg) \\ 
&\quad +C \int_\R |(\tv_1)^{X_1}_x||(\tv_2)^{X_2}_x| |\pv-p(\tv)| \, dx.
\end{align*}
Using \eqref{normh}, Lemma \ref{lem:shock-interact-1} and the assumption \eqref{perturbation_small} with \eqref{smp1}, we have
\begin{align*}
|\cS_1|+|\cS_2| 
&\leq C \sum_{i=1}^2\bigg( (\| v-\tv \|_{L^\infty(0,T;L^2(\R))} + \lVert h-\widetilde{h} \rVert_{L^\infty(0,T;L^2(\R))} )\sqrt{ \int_\R |(\tv_i)^{X_i}_x|^2|\tv-\tv^{X_i}_i|^2dx }  \bigg)\\
&\quad  +C\eps_1 \int_\R |(\tv_1)^{X_1}_x||(\tv_2)^{X_2}_x| \, dx\\
&\le C\eps_1 \delta_1 \delta_2( \delta_1^{1/2} +\delta_2^{1/2}) \exp(-C \min(\delta_1,\delta_2) t)+ C\delta_1^2 \delta_2^2 (\delta_1+ \delta_2)  \exp(-C \min(\delta_1,\delta_2) t)\\
&\quad + C\eps_1\delta_1\delta_2  \exp(-C \min(\delta_1,\delta_2) t) .
\end{align*}
Hence we finally estimate $\cS_1$ and $\cS_2$ as
\beq\label{s1s2}
	|\cS_1|+|\cS_2|\le C\delta_1\delta_2  \exp(-C \min(\delta_1,\delta_2) t).	
	\eeq

\subsection{Estimate in small time} \label{sec:sest}
Note that the estimate  \eqref{est-I1} on $\mathcal{R}_1$ contains the coefficient $\frac{1}{t^2}$, which is not integrable near $t=0$.  Thus, in order to get the desired result, we would find a rougher estimate in a short time. To this end, we return to the previous right-hand side $\mathcal{R}$ in \eqref{est}:
\begin{align*}
	\mathcal{R}=-\sum_{i=1}^2\frac{\delta_i}{M}|\dot{X}_i|^2 + \sum_{i=1}^2\left(\dot{X_i}\sum_{j=3}^6Y_{ij}\right) +\sum_{i=1}^5\mathcal{B}_i +\mathcal{S}_1+\mathcal{S}_2-\mathcal{G}_1-\mathcal{G}_2-\mathcal{D}.
\end{align*}
By the Young's inequality \eqref{2young}, we first have
\begin{align*}
	\mathcal{R}+\sum_{i=1}^2\frac{\delta_i}{4M}|\dot{X}_i|^2 + \mathcal{G}_1 + \mathcal{D} + \mathcal{G}^S \le  \sum_{i=1}^2\left(\frac{C}{\delta_i}\sum_{j=3}^6|Y_{ij}|^2\right)+\sum_{i=1}^5 \mathcal{B}_i+\mathcal{S}_1+\mathcal{S}_2 + \mathcal{G}^S.
\end{align*}
Using \eqref{perturbation_small} and Lemma \ref{lem:shock-est} with \eqref{inta}, we have
\begin{align*}
\sum_{j=3}^6|Y_{ij}| &\le  C  \norm{(\tv_i)^{X_i}_x}_{L^2(\R)}  (\|h-\widetilde{h}\|_{L^2(\R)}+\norm{\pv-p(\tv)}_{L^2(\R)}) \\
& +  \norm{(a_i)^{X_i}_x}_{L^\infty(\R)}  (\|h-\widetilde{h}\|_{L^2(\R)}^2+\norm{\pv-p(\tv)}_{L^2(\R)}^2) \\
&\le C \delta_i \eps_1,
\end{align*}
which yields
\[
 \sum_{i=1}^2\left(\frac{C}{\delta_i}\sum_{j=3}^6|Y_{ij}|^2\right) \le C\eps_1^2  \sum_{i=1}^2 \delta_i.
\]
Likewise, we have
\[
\sum_{i=1}^5 \mathcal{B}_i \le  C\sum_{i=1}^2 \Big(\norm{(a_i)^{X_i}_x}_{L^\infty(\R)} +  \norm{(\tv_i)_x}_{L^\infty(\R)} \Big) \norm{v-\tv}_{H^1(\R)}^2 \le C \eps_1^2  \sum_{i=1}^2\delta_i,
\]	
and
\[
 \mathcal{G}^S \le C\sum_{i=1}^2 \norm{(\tv_i)^{X_i}_x}_{L^\infty(\R)} \norm{\pv-p(\tv)}_{L^2(\R)}^2 \le C\eps_1^2\sum_{i=1}^2 \delta_i^2.
\]
Hence, the above estimates and \eqref{s1s2} provides a rough bound: for any $\delta_1, \delta_2\in (0,\delta_0)$,
\beq\label{sest}
\mathcal{R}+\sum_{i=1}^2\frac{\delta_i}{4M}|\dot{X}_i|^2 + \mathcal{G}_1 + \mathcal{D} + \mathcal{G}^S \le C\delta_0,\quad t>0.
\eeq

\subsection{Proof of Lemma \ref{lem:rel-ent}} We here complete the proof of Lemma \ref{lem:rel-ent}. First of all, from \eqref{sest} with \eqref{est}, we have a rough estimate for a short time $t\le 1$ as follows: 
\begin{align*}
\frac{d}{dt}\int_{\R}a\eta(U|\tU)\,dx&\le C\delta_0,
\end{align*}
which implies
\begin{equation}\label{est-short-time}
	\int_{\R} a\eta(U|\tU)\,d x\Bigg|_{t=1} +\int_0^1\bigg(\sum_{i=1}^2\frac{\delta_i}{4M}|\dot{X}_i|^2 + \mathcal{G}_1 + \mathcal{D} + \mathcal{G}^S \bigg) dt \le \int_{\R} a\eta(U|\tU)\,d x\Bigg|_{t=0} + C\delta_0.
\end{equation}
On the other hand, for $t\ge 1$, we combine all the estimates in Section \ref{sec:4.5} and Section \ref{sec:4.6} to derive

\begin{align*}
	\frac{d}{dt}&\int_{\R}a\eta(U|\tU)\,dx+\sum_{i=1}^2\frac{\delta_i}{4M}|\dot{X}_i|^2+\frac{1}{2}\mathcal{G}_1 +\frac{C_1}{2}\mathcal{G}^S+\frac{1}{8}\mathcal{D}\\
	&\le C\left(\sum_{i=1}^2\delta_i \exp(-C\delta_i t)+ \delta_1\delta_2\exp(-C\min(\delta_1,\delta_2)t)+\frac{1}{t^2}\right)\int_{\R}a\eta(U|\tU)\,dx\\
	&\quad +C\delta_1\delta_2\exp(-C\min(\delta_1,\delta_2)t).
\end{align*}
Hence, we use Gr\"onwall inequality to conclude that for all $t\ge 1$,
\begin{align}
	\begin{aligned}\label{est-long-time}
	\int_{\R}a&\eta(U(t,x)|\tU(t,x))\,dx+\int_1^t\left(\sum_{i=1}^2\frac{\delta_i}{4M}|\dot{X}_i|^2+\frac{1}{2}\mathcal{G}_1 +\frac{C_1}{2}\mathcal{G}^S+\frac{1}{8}\mathcal{D}\right)\,ds\\
	&\le \left(\int_{\R}a\eta(U|\tU)\,dx\Bigg|_{t=1}+\frac{C\delta_1\delta_2}{\min(\delta_1,\delta_2)}\right)\\
	&\quad \times\exp\left(\int_1^t \sum_{i=1}^2\delta_i \exp(-C\delta_i s)+\delta_1\delta_2\exp(-C\min(\delta_1,\delta_2)s)+\frac{1}{s^2}\,ds\right)\\
	&\le C\int_{\R}a\eta(U|\tU)\,dx\Bigg|_{t=1}+C \max(\delta_1,\delta_2).
	\end{aligned}
\end{align}
Finally, we combine the estimates  \eqref{est-short-time} and \eqref{est-long-time} to conclude that
\begin{align*}
	\int_{\R}a&\eta(U(t,x)|\tU(t,x))\,dx+\int_0^t \left(\sum_{i=1}^2 \delta_i|\dot{X}_i|^2 + \mathcal{G}_1+\mathcal{G}^S+\mathcal{D}\right)\,ds\\
	&\le C\int_{\R}a(0,x)\eta(U_0(x)|\tU(0,x))\,dx +C\delta_0 ,
\end{align*}
which together with $\frac{1}{2}\le a\le 1$, $D(U)\le C\mathcal{D}(U)$ and $G_1(U)\le C\mathcal{G}_1(U)$, completes the proof of Lemma \ref{lem:rel-ent}.

\section{Proof of Proposition \ref{prop:H1-estimate}}\label{sec:5}
\setcounter{equation}{0}

In this section, we complete the proof of Proposition \ref{prop:H1-estimate}. 
\subsection{Zeroth-order estimate}
We first estimate the zeroth-order term of $\norm{u-\tu}_{L^2 (\R)}$.
\begin{lemma}Under the hypotheses of Proposition 3.2,  there exists a positive constant $C>0$, that is independent of $\delta_1,\delta_2,\varepsilon_1,T$, such that for all $t \in (0,T]$,
\begin{equation} \label{temp 0}
\begin{aligned}
&\norm{v-\tv}_{H^1(\R)}^2+\norm{u-\tu}_{L^2(\R)}^2+\int_0^t \sum_{i=1}^2 \delta_i |\dot{X}_i|^2 \,ds +\int_0^t \left( \mathcal{G}^S+D+D_1 \right) \,ds\\
&\qquad \leq C \left( \norm{v_0-\tv(0,\cdot)}_{H^1(\R)}^2+\norm{u_0-\tu(0,\cdot)}_{L^2(\R)}^2 \right)+C \delta_0^{1/2}
\end{aligned}
\end{equation}
where $\mathcal{G}^S$ and $D$ are the good terms of Lemma \ref{lem:rel-ent}, and
\begin{equation}\label{D1}
{D}_1:=\int_\R|(u-\tu)_x|^2 \,dx.
\end{equation}
\end{lemma}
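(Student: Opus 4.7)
I take Lemma \ref{lem:rel-ent} as the baseline $L^2$-estimate in the effective-velocity variables and upgrade it to the stated $H^1$-estimate by combining algebraic identities with an additional parabolic energy estimate on the momentum equation. Since Lemma \ref{lem-rel-quant}(1) gives $Q(v|\tv)\sim|v-\tv|^2$ on the uniformly bounded range of $(v,\tv)$ guaranteed by \eqref{tempo2}, Lemma \ref{lem:rel-ent} already delivers pointwise-in-time control of $\|v-\tv\|_{L^2}^2 + \|h-\tilde h\|_{L^2}^2$ together with the time-integrated bounds on $\mathcal{G}^S$, $D$, and $\sum_i\delta_i|\dot X_i|^2$, all by $C\,(\text{initial}) + C\delta_0$.

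Second, using $h = u - (\ln v)_x$ and the analogous definition of $\widetilde{h}$, I invoke the identities
\[
u-\tu = (h-\widetilde{h}) + \frac{(v-\tv)_x}{v} - \frac{\tv_x(v-\tv)}{v\tv},\qquad \partial_x(p(v)-p(\tv)) = p'(v)(v-\tv)_x + (p'(v)-p'(\tv))\,\tv_x,
\]
which, combined with $\|\tv_x\|_{L^\infty} \le C\delta_0^2$ from Lemma \ref{lem:shock-est}, yield the pointwise bounds $|(v-\tv)_x|^2 \le C|\partial_x(p(v)-p(\tv))|^2 + C\delta_0^4|v-\tv|^2$ and $|u-\tu|^2 \le C|h-\widetilde{h}|^2 + C|(v-\tv)_x|^2 + C\delta_0^4|v-\tv|^2$. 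Thus the pointwise-in-time $L^2$-bounds on $u-\tu$ and $(v-\tv)_x$ reduce to controlling $\|\partial_x(p(v)-p(\tv))\|_{L^2}^2$ pointwise in time, for which the time-integrability of $D(U)$ alone from Step 1 is insufficient.

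Third, to close the loop and to produce $\int_0^t D_1\,ds$, I would run an $L^2$-energy estimate on the perturbed momentum equation \eqref{com-eqn}, i.e., multiply $(u-\tu)_t + \partial_x(p(v)-p(\tv)) = (u_x/v - \tu_x/\tv)_x + \sum_i\dot X_i(\tu_i)^{X_i}_x - E_2 - E_3$ by $u-\tu$ and integrate in $x$. The parabolic dissipation yields $\ge D_1/C$ modulo a cross term absorbed via $\|\tu_x\|_{L^\infty}\le C\delta_0^2$; the interaction and shift sources contribute $O(\delta_0)$ when time-integrated, by Lemmas \ref{lem:shock-interact-1}--\ref{lem:shock-interact-2} and \eqref{bddx12}.

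The main obstacle is the pressure coupling $\int \partial_x(p(v)-p(\tv))(u-\tu)\,dx$: after Young's it leaves a $\|u-\tu\|_{L^2}^2$ contribution that naively drives Gronwall to exponential growth. The resolution is to bootstrap this term through the identity in Step 2, substituting $\|u-\tu\|_{L^2}^2 \le C\|h-\widetilde{h}\|_{L^2}^2 + C\,D(U)(t) + C\delta_0^4\|v-\tv\|_{L^2}^2$; together with the uniform $L^\infty_t$-bound on $\|h-\widetilde{h}\|_{L^2}^2$ and the $L^1_t$-control of $D(U)$ from Lemma \ref{lem:rel-ent}, this closes the estimate uniformly in $t$ and simultaneously yields the pointwise bound on $\|(v-\tv)_x\|_{L^2}^2$ via the same identity. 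The slightly weaker $C\delta_0^{1/2}$ (rather than $C\delta_0$) on the right-hand side arises from Cauchy-Schwarz splits combining uniformly-bounded and time-integrable factors in handling the pressure and interaction terms.
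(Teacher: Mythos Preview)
Your Step~3 contains a genuine gap. Multiplying only the momentum equation by $u-\tu$ and handling the pressure coupling $\int_\R \partial_x(p(v)-p(\tv))(u-\tu)\,dx$ by Young's inequality leaves, after your bootstrap substitution, the term $C\int_0^t\|h-\widetilde h\|_{L^2(\R)}^2\,ds$. Lemma~\ref{lem:rel-ent} provides only a \emph{uniform-in-time} bound on $\|h-\widetilde h\|_{L^2}^2$, not time-integrability, so this term grows like $Ct$ and the estimate does not close. The same issue arises if you integrate by parts first and are left with $C\int_0^t\|p(v)-p(\tv)\|_{L^2}^2\,ds$.

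The paper avoids this entirely by computing the full relative entropy in the $(v,u)$ variables, i.e.\ $\frac{d}{dt}\int_\R\big(\tfrac12|u-\tu|^2+Q(v|\tv)\big)\,dx$, rather than testing the momentum equation alone. In that computation the pressure--velocity coupling appears as the entropy-flux term $\mathcal{I}_1=-\int_\R\partial_x\big((p(v)-p(\tv))(u-\tu)\big)\,dx=0$, so no $O(1)\cdot\|u-\tu\|_{L^2}^2$ or $O(1)\cdot\|v-\tv\|_{L^2}^2$ residual ever appears. The remaining bad terms (e.g.\ $\mathcal{I}_2=-\sum_i\int(\tu_i)^{X_i}_x\,p(v|\tv)\,dx$ and the shift terms $\dot X_i\mathcal{Y}_i$) are all controlled by $\mathcal{G}^S$, $G_1$, $D$, and $\sum_i\delta_i|\dot X_i|^2$, which are time-integrable by Lemma~\ref{lem:rel-ent}; adding a small multiple of that $(v,u)$ estimate to \eqref{est-rel-ent} then closes. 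Finally, the pointwise-in-$t$ control of $\|(v-\tv)_x\|_{L^2}$ is \emph{not} obtained through $D(U)$ (which is only $L^1_t$-controlled) but via the algebraic identity $(u-\tu)-(h-\widetilde h)=\frac{(v-\tv)_x}{v}+\mbox{(small terms)}$, so that $\|(v-\tv)_x\|_{L^2}^2\le C\big(\|u-\tu\|_{L^2}^2+\|h-\widetilde h\|_{L^2}^2+\delta_0^2\big)$, where both norms on the right are now uniformly bounded. Your Step~2 identity going through $\partial_x(p(v)-p(\tv))$ cannot deliver this pointwise conclusion.
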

\begin{proof}
As in Section \ref{sec:re}, the system  \eqref{eq:NS} can be written as:
\[U_t +A(U)_x = \pa_x(M(U)\pa_x  D \eta(U)),\]
where 
\[U:=\begin{pmatrix}
	v\\u
\end{pmatrix},\quad A(U):=\begin{pmatrix}
-u\\p(v)
\end{pmatrix},\quad \eta(U):=\frac{u^2}{2}+Q(v)=\frac{u^2}{2}+\frac{v^{-\gamma+1}}{\gamma-1},\quad M(U):=\begin{pmatrix}
0&0\\0&\frac{1}{v}
\end{pmatrix}.\]
Then, the shifted composite wave $\widetilde{U}=(\tv,\tu)$ satisfies
	\begin{equation} \label{eq:viscous-shock-uv}
	\pa_t\widetilde{U} +A(\widetilde{U})_x=\pa_x(M(\widetilde{U})\pa_x D \eta (\widetilde{U}))-\sum_{i=1}^2\dot{X}_i(\widetilde{U}_i)^{X_i}_x +\begin{pmatrix}
		0\\E_2+E_3
	\end{pmatrix},
	\end{equation}
where 
\[E_2:=p(\widetilde{v})_x-\sum_{i=1}^2p(\widetilde{v}^{X_i}_i)_x,\quad 
E_3:=-\left(\frac{\tu_x}{\tv}\right)_x+\sum_{i=1}^2 \left( \frac{(\tu_i)^{X_i}_x}{\tv^{X_i}_i}\right)_x.\]
Then, the relative entropy method (see e.g. \cite[Lemma 5.1]{KVW3}) implies that
\[\frac{d}{dt} \int_\R \eta(U(t,x)|\tU(t,x)) \,dx=\sum_{i=1}^2(\dot{X}_i(t)\mathcal{Y}_i(t))+\sum_{i=1}^6 \mathcal{I}_i(U),\]
where
\begin{align*}
\mathcal{Y}_i &:=\int_\R (\tU_i)^{X_i}_x D^2 \eta(\tU)(U-\tU) \,  dx=-\int_\R p'(\tv)(\tv_i)^{X_i}_x(v-\tv) \,  dx+\int_\R (\tu_i)^{X_i}_x (u-\tu) \,  dx=:\mathcal{Y}_{i1}+\mathcal{Y}_{i2},\\
\mathcal{I}_1 &:=-\int_\R \partial_x G(U;\tU) \, dx=-\int_\R \partial_x \left( (p-\tp)(u-\tu) \right) \, dx=0,\\
\mathcal{I}_2 &:=-\int_\R (\partial_x  D\eta(\tU))A(U|\tU) \, dx=-\sum_{i=1}^2 \int_\R  (\tu_i)^{X_i}_x p(v|\tv) \,  dx=:\mathcal{I}_{21}+\mathcal{I}_{22},\\
\mathcal{I}_3 &:=\int_\R \left( D \eta(U)-D \eta(\tU)\right) \partial_x \left(M(U) \partial_x ( D \eta(U)-D \eta(\tU) )\right) \,  dx=-\underbrace{\int_\R \frac{1}{v} |\partial_x (u-\tu)|^2 \, dx}_{=:\mathbf{D}_1},\\
\mathcal{I}_4 &:=\int_\R \left( D\eta(U)- D\eta (\tU) \right) \partial_x \left( ( M(U)-M(\tU) ) \partial_x D \eta (\tU) \right) \, dx=\int_\R (u-\tu)\left(\left(\frac{1}{v}-\frac{1}{\tv}\right) \tu_x \right)_x \,  dx,\\
\mathcal{I}_5 &:=\int_\R (D \eta) (U | \tU) \partial_x \left( M(\tU) \partial_x D \eta (\tU) \right) \,  dx=\int_\R
\begin{pmatrix}
	-p(v|\tv)\\0
\end{pmatrix}\cdot
\begin{pmatrix}
	0 \\ (\frac{\tu_x}{\tv})_x 
\end{pmatrix}
\, dx =0,\\
\mathcal{I}_6 &:=-\int_\R D^2 \eta(\tU)(U-\tU)
 \begin{pmatrix} 0 \\ E_2+E_3 \end{pmatrix} \,  dx=-\int_\R (u-\tu)(E_2+E_3) \,dx\\
 &=-\int_\R (u-\tu) \left( \sum_{i=1}^2 p(\tv^{X_i}_i)_x-p(\tv)_x+\sum_{i=1}^2 \left( \frac{(\tu_i)^{X_i}_x}{\tv^{X_i}_i}\right)_x-\left(\frac{\tu_x}{\tv}\right)_x \right) \,dx.
\end{align*}
In the following, we estimate each term above one by one.\\

\noindent $\bullet$ (Estimate of $\mathcal{Y}_{i1}$): 
We use $\phi_1+\phi_2=1$ to have
\[\mathcal{Y}_{11}=-\int_\R p'(\tv)(\tv_1)^{X_1}_x \phi_1 (v-\tv) \,dx-\int_\R p'(\tv)(\tv_1)^{X_1}_x \phi_2 (v-\tv) \,dx.
\]
Since
\[
\left|\int_\R p'(\tv)(\tv_1)^{X_1}_x \phi_1(v-\tv) \,dx\right|^2 \leq C \int_\R | (\tv_1)^{X_1}_x| \,  dx\int_\R |(\tv_1)^{X_1}_x| | \phi_1(v-\tv)|^2 \, dx \leq C \delta_1\mathcal{G}^S,
\]
and (by Lemma \ref{lem:shock-interact-2})
\[\left|\int_\R p'(\tv)(\tv_1)^{X_1}_x \phi_2 (v-\tv) \,  dx \right|^2 \leq C \int_\R \left| (\tv_1)^{X_1}_x \phi_2  \right|^2 dx\int_\R \left| v-\tv\right|^2 dx \leq C \delta_1^3 \exp(-C \delta_1 t) \norm{v-\tv}^2_{L^2(\R)} ,\]
we have
\begin{align*}
|\dot{X}_1\mathcal{Y}_{11}| &\leq \frac{\delta_1}{4}|\dot{X}_1|^2+\frac{C}{\delta_1}|\mathcal{Y}_{11}|^2 \leq \frac{\delta_1}{4}|\dot{X}_1|^2+C \mathcal{G}^S+C \delta_1^2 \exp(-C \delta_1 t)  \norm{v-\tv}^2_{L^2(\R)}.
\end{align*}
Likewise, we estimate $\dot{X}_2\mathcal{Y}_{21}$ as
\[|\dot{X}_2\mathcal{Y}_{21}| \leq \frac{\delta_2}{4}|\dot{X}_2|^2+C \mathcal{G}^S+C \delta_2^2 \exp(-C \delta_2 t)  \norm{v-\tv}^2_{L^2(\R)}.\]

\noindent $\bullet$ (Estimate of $\mathcal{Y}_{i2}$): We use a similar estimate as in \eqref{h-est} to obtain
\begin{align*}
|u-\tu| \leq |h-\widetilde{h}|+C(|(v-\tv)_x|+|\tv_x||v-\tv|+|(\tv_1)^{X_1}_x||\tv-\tv^{X_1}_1|+|(\tv_2)^{X_2}_x||\tv-\tv^{X_2}_2|).
\end{align*}
On the other hand, since
\[(p(v)-p(\tv))_x = p'(v)(v-\tv)_x+\tv_x(p'(v)-p'(\tv)),\]
we have
\[|(v-\tv)_x| \leq C|(p(v)-p(\tv))_x|+C|\tv_x||v-\tv|.\]
Then, using the localizations $\phi_i$,  and then the good term $G_1$ of  Lemma \ref{lem:rel-ent}, and Lemma \ref{lem:shock-interact-1} and  Lemma \ref{lem:shock-interact-2} with \eqref{est-rel-2},  we have
\begin{align*}
|\mathcal{Y}_{12}| &\leq C \int_\R |(\tv_1)^{X_1}_x| \bigg( \left| h-\widetilde{h}-\frac{p(v)-p(\tv)}{\sigma_1} \right| +|p(v)-p(\tv)|\\
&\hspace{3cm} + |(p(v)-p(v))_x|+|\tv_x||v-\tv|+|(\tv_1)^{X_1}_x||\tv-\tv^{X_1}_1|+|(\tv_2)^{X_2}_x||\tv-\tv^{X_1}_2| \bigg) \,dx\\
&\leq C \left( \frac{\delta_1}{\sqrt{\lambda}} \sqrt{{G}_{1}}+\sqrt{\delta_1}\sqrt{\mathcal{G}^S}+\int_\R |(\tv_1)^{X_1}_x||\phi_2(p(v)-p(\tv))| \,dx+ \delta_1 \sqrt{D}+\delta_1\delta_2\exp(-C \min(\delta_1,\delta_2)t)\right).
\end{align*}
This yields
\begin{align*}
|\dot{X_1}\mathcal{Y}_{12}| &\leq \frac{\delta_1}{4}|\dot{X}_1|^2+C\frac{\delta_1}{\lambda}{G}_1+C\mathcal{G}^S+C \delta_1 D\\
&\quad +C\delta_1^2 \exp(-C \delta_1 t)\norm{v-\tv}^2_{L^2(\R)}+C \delta_1 \delta_2^2 \exp(-C \min(\delta_1,\delta_2)t).
\end{align*}
A similar estimate holds for $|\dot{X}_2\mathcal{Y}_{22}|$.\\

\noindent $\bullet$ (Estimate of $\mathcal{I}_2$): We use the estimate \eqref{est-rel-3}$_1$ and the localization,  and then use Lemma \ref{lem:shock-interact-2}  with \eqref{est-rel-2} to obtain 
\begin{align*}
|\mathcal{I}_{21}|&\leq C \mathcal{G}^S +C\int_\R |(\tv_1)^{X_1}_x| \phi_2 |p(v)-p(\tv)|^2 \, dx\\
& \leq C \mathcal{G}^S+C\sqrt{\int_\R |(v_1)^{X_1}_x \phi_2|^2 \,dx}\sqrt{\int_\R |p(v)-p(\tv)|^4 \,dx}\\
&\leq  C \mathcal{G}^S + C \delta_1^{3/2} \exp(-C \delta_1 t) \norm{v-\tv}_{L^\infty(\R)} \norm{v-\tv}_{L^2(\R)},
\end{align*}
and the same estimate holds for $\mathcal{I}_{22}$.\\

\noindent $\bullet$ (Estimate of $\mathcal{I}_4$): To control $\mathcal{I}_4$, we will use the good term $\mathbf{D}_1$ generated by $\mathcal{I}_{3}$.
We use the localization,  and then use Lemma \ref{lem:shock-interact-2} with \eqref{est-rel-2} to derive
\begin{align*}
|\mathcal{I}_4| &\leq \int_\R |(u-\tu)_x||v-\tv|(|(\tu_1)^{X_1}_x|+|(\tu_2)^{X_2}_x|) \,dx\\
& \leq \frac{1}{4}\mathbf{D}_1+C(\delta_1 +\delta_2) \mathcal{G}^S\\
 &\quad +C\delta_1\int_\R |(\tv_1)^{X_1}_x| \phi_2 |p(v)-p(\tv)|^2 \,  dx+C\delta_2\int_\R |(\tv_2)^{X_2}_x| \phi_1 |p(v)-p(\tv)|^2 \,  dx\\
 & \leq \frac{1}{4}\mathbf{D}_1+C(\delta_1 +\delta_2)  \mathcal{G}^S +C\sum_{i=1}^2 \delta_i^{5/2} \exp(-C \delta_i t) \norm{v-\tv}_{L^\infty(\R)} \norm{v-\tv}_{L^2(\R)} .
\end{align*}

\noindent $\bullet$ (Estimate of $\mathcal{I}_6$) First, we note that the following estimate holds:
\begin{align*}
& \left| \sum_{i=1}^2 \left(\frac{(\tu_i)^{X_i}_x}{\tv_i^{X_i}}\right)_x-\left(\frac{\tu_x}{\tv}\right)_x \right|\\
&\quad \leq C \left(   (|(\tu_1)^{X_1}_{xx}|+|(\tu_1)^{X_1}_x| |(\tv_1)^{X_1}_x|)|\tv-\tv_1^{X_1}|+(|(\tu_2)^{X_2}_{xx}|+|(\tu_2)^{X_2}_x| |(\tv_2)^{X_2}_x|)|\tv-\tv_2^{X_2}| \right.\\
&\qquad + \left.  |(\tu_1)^{X_1}_x||(\tv_2)^{X_2}_x|+|(\tu_2)^{X_2}_x||(\tv_1)^{X_1}_x|\right)\\
&\quad \leq C \left(   (|(\tv_1)^{X_1}_{xx}|+|(\tv_1)^{X_1}_x|^2)|\tv-\tv_1^{X_1}|+(|(\tv_2)^{X_2}_{xx}|+|(\tv_2)^{X_2}_x|^2 )|\tv-\tv_2^{X_2}| +|(\tv_1)^{X_1}_x| |(\tv_2)^{X_2}_x| \right).
\end{align*}
Thanks to \eqref{second-order} in Lemma \ref{shock-est}, we have $|(\tv_i)^{X_i}_{xx}|,|(\tv_i)^{X_i}_x|^2\ll |(\tv_i)^{X_i}_x|$, and therefore, we estimate $\mathcal{I}_{6}$ as
\begin{align*}
\mathcal{I}_6 &\leq C\int_\R |u-\tu|( |(\tv_1)^{X_1}_x||\tv-\tv_1^{X_1}|+|(\tv_2)^{X_2}_x| |\tv-\tv_2^{X_2}|+ |(\tv_1)^{X_1}_x| |(\tv_2)^{X_2}_x|) \,  dx\\
&\leq C \varepsilon_1 \norm{|(\tv_1)^{X_1}_x||\tv-\tv_1^{X_1}|+|(\tv_2)^{X_2}_x| |\tv-\tv_2^{X_2}|+ |(\tv_1)^{X_1}_x| |(\tv_2)^{X_2}_x|}_{L^2},
\end{align*}
where we used a smallness of the perturbation \eqref{perturbation_small}.\\

\noindent Combining all the estimates and using  \eqref{perturbation_small} with Sobolev embedding,  we  have
\begin{align}
\begin{aligned}\label{E-3}
&\frac{d}{dt} \int_\R \eta(U(t,x)| \tU(t,x)) \, dx + \frac{1}{2} \mathbf{D}_1 \\
&\quad \leq \sum_{i=1}^2 \frac{\delta_i}{2} |\dot{X}_i|^2+C \sum_{i=1}^2 \frac{\delta_i}{\lambda}  G_1 +c_1 \mathcal{G}^S+C \sum_{i=1}^2 \delta_i D\\
&\qquad +C\varepsilon_1 \sum_{i=1}^2 \delta_i^{3/2} \exp(-C \delta_i t) + C\sum_{i=1}^2 \frac{\delta_1^2 \delta_2^2}{\delta_i} \exp(-C \min(\delta_1,\delta_2)t)\\
&\qquad +C \varepsilon_1  \norm{|(\tv_1)^{X_1}_x||\tv-\tv_1^{X_1}|+|(\tv_2)^{X_2}_x| |\tv-\tv_2^{X_2}|+ |(\tv_1)^{X_1}_x| |(\tv_2)^{X_2}_x|}_{L^2}.
\end{aligned}
\end{align}
On the other hand, we use the estimates
\[\|(\tv_i)^{X_i}_x\|_{L^\infty}\le \delta_i^2,\quad \|\tv-\tv_1^{X_1}\|_{L^\infty}\le \delta_2,\quad \|\tv-\tv_2^{X_2}\|_{L^\infty}\le \delta_1,\]
and Lemma \ref{lem:shock-interact-1} to derive the following inequalities :
\begin{align}
\begin{aligned}\label{L2-est}
\norm{|(\tv_i)^{X_i}_x|\tv-\tv_i^{X_i}|}_{L^2} &\leq C \delta_i ^{1/2} \delta_1\delta_2 \exp(-C \min(\delta_1,\delta_2) t),\\
\norm{|(\tv_1)^{X_1}_x| |(\tv_2)^{X_2}_x|}_{L^2} &\leq \delta_1^{3/2} \delta_2^{3/2} \exp(-C \min (\delta_1,\delta_2) t).
\end{aligned}
\end{align}
Integrating  \eqref{E-3} over $[0,t]$ for any $t \leq T$,  and using \eqref{L2-est},  we have
\begin{equation}\label{temp1}
\begin{aligned}
&\int_\R \left( \frac{|u-\tu|^2}{2}+Q(v|\tv) \right) \,  dx+\frac{1}{2} \int_0^t \mathbf{D}_1(U) \,  ds\\
&\quad \leq C\int_\R \left( \frac{|u_0-\tu(0,x)|^2}{2}+Q(v_0|\tv(0,x))\right) \,  dx\\
&\qquad +\int_0^t \left( \sum_{i=1}^2 \frac{\delta_i}{2} |\dot{X}_i|^2+C \sum_{i=1}^2 \frac{\delta_i}{\lambda} G_1+c_1 \mathcal{G}^S+C \sum_{i=1}^2 \delta_i D \right) \,  ds +C \varepsilon_1  \max(\delta_1,\delta_2)^{1/2}.
\end{aligned}
\end{equation}
Therefore,  multiplying \eqref{temp1} by the constant $\frac{1}{2\max (1,c_1)}$,  and then adding the result to \eqref{est-rel-ent},  together with the smallness of $\delta_i/\lambda,\delta_i,\varepsilon_1$,  we have
\begin{equation}\label{temp2}
\begin{aligned}
&\norm{v-\tv}_{L^2 (\R)}^2+\Vert h-\widetilde{h} \Vert_{L^2(\R)}^2+\norm{u-\tu}_{L^2(\R)}^2+\int_0^t \sum_{i=1}^2 \delta_i |\dot{X}_i|^2 \,ds+\int_0^t \left( \mathcal{G}^S+D+\mathbf{D}_1 \right) \,ds\\
&\leq C \left( \norm{v_0-\tv(0,\cdot)}_{L^2(\R)}^2 +\Vert (h-\widetilde{h})(0,\cdot)\Vert_{L^2(\R)}^2+\norm{u_0-\tu(0,\cdot)}_{L^2(\R)}^2 \right)+C \delta_0^{1/2}
\end{aligned}
\end{equation}
where we have used that 
\[C^{-1}|v-\tv|^2 \leq Q(v|\tv)\leq C |v-\tv|^2.\]
To complete the proof,  we need to control $\|(v-\tv)_x\|_{L^2(\R)}$ and $\|(h-\widetilde{h})(0,\cdot)\|_{L^2(\R)}$. Specifically, we will show that
\begin{equation} \label{temp 3}
\norm{(v-\tv)_x}_{L^2(\R)}^2 \leq C \left( \Vert h-\widetilde{h} \Vert_{L^2(\R)}^2+\norm{u-\tu}_{L^2(\R)}^2+\norm{v-\tv}_{L^2(\R)}^2+ \delta_1^2+\delta_2^2 \right)
\end{equation}
and
\begin{equation} \label{temp 4}
\Vert (h-\widetilde{h})(0,\cdot) \Vert_{L^2(\R)}^2 \leq C\left(\norm{v_0-\tv(0,\cdot)}_{H^1(\R)}^2+\norm{u_0-\tu(0,\cdot)}_{L^2(\R)}^2 +\delta_1^3 \delta_2^2+ \delta_1^2 \delta_2^3 \right).
\end{equation}
Using the definition of $h$ and $\widetilde{h}$,  we observe that
\[
\begin{aligned}
(u-\tu)-(h-\widetilde{h})&=(\ln v- \ln \tv_1^{X_1} -\ln \tv_2^{X_2})_x\\
&=\frac{(v-\tv)_x}{v}+\frac{\tv-v}{v \tv} (\tv)_x-\frac{\tv_2^{X_2}}{\tv \tv_1^{X_1}} (\tv_1)^{X_1}_x-\frac{\tv_1^{X_1}}{\tv \tv_2^{X_2}}(\tv_2)^{X_2}_x
\end{aligned} \]
which yields 
\[
\begin{aligned}
(v-\tv)_x=v(u-\tu)-v(h-\widetilde{h})-\frac{(\tv)_x}{\tv}(v-\tv)+\frac{v \tv_2^{X_2}}{\tv \tv_1^{X_1}}(\tv_1)^{X_1}_x+\frac{v \tv_1^{X_1}}{\tv \tv_2^{X_2}}(\tv_2)^{X_2}_x.
\end{aligned}
\]
This, together with the fact that $\norm{(\tv_i)^{X_i}_x}_{L^2(\R)}^2 \leq C \delta_i^2 $ implies \eqref{temp 3}. On the other hand, it follows from the estimate \eqref{h-est} that 
\[\Vert (h-\widetilde{h})(0,\cdot)\Vert_{L^2(\R)}^2 \leq C \left(\norm{v_0-\tv(0,\cdot)}_{H^1(\R)}^2+\norm{u_0-\tu(0,\cdot)}_{L^2(\R)}^2+C \delta_1^3 \delta_2^2+C \delta_1^2 \delta_2^3\right),\]
which yields \eqref{temp 4}. Hence, combining \eqref{temp2},\eqref{temp 3}  and \eqref{temp 4} with $D_1\le C  \mathbf{D}_1$, we obtain the desired estimate \eqref{temp 0}.
\end{proof}

\subsection{First-order estimate}
We now present the following estimate for $\norm{u-\tu}_{H^1(\R)}$, which completes the proof of Proposition \ref{prop:H1-estimate}.

\begin{lemma} Under the hypotheses of Proposition 3.2,  there exists a positive constant $C>0$, independent of $\delta_1,\delta_2,\varepsilon_1,T$, such that for all $t\in (0,T]$,
\begin{equation}
\begin{aligned}
&\norm{v-\tv}_{H^1(\R)}^2+\norm{u-\tu}_{H^1(\R)}^2+\int_0^t \sum_{i=1}^2 \delta_i |\dot{X}_i|^2 \,ds +\int_0^t \left( \mathcal{G}^S+D+D_1+{D}_2 \right) \,ds\\
&\leq C \left( \norm{v_0-\tv(0,\cdot)}_{H^1(\R)}^2+\norm{u_0-\tu(0,\cdot)}_{H^1(\R)}^2 \right)+C \delta_0^{1/2},
\end{aligned}
\end{equation}
where $\mathcal{G}^S,D$ are as in Lemma \ref{lem:rel-ent},  and $D_1$ is as in \eqref{temp 0},  and
\[{D}_2(U):=\int_\R | (u-\tu)_{xx}|^2 \, dx.\]
\end{lemma}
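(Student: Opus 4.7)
The plan is to upgrade the $L^2$-estimate \eqref{temp 0} of the previous lemma to full $H^1$-control on $u-\tu$ by carrying out one additional energy estimate at the first-derivative level. First I would take the equation
\[
(u-\tu)_t + \big(p(v)-p(\tv)\big)_x = \Big(\tfrac{u_x}{v} - \tfrac{\tu_x}{\tv}\Big)_x - \sum_{i=1}^2 \dot{X}_i (\tu_i)^{X_i}_x - E_2 - E_3,
\]
multiply it by $-(u-\tu)_{xx}$, and integrate over $\R$. After integration by parts the time-derivative term produces $\tfrac{1}{2}\tfrac{d}{dt}\|(u-\tu)_x\|_{L^2}^2$, while the expansion of $(u_x/v-\tu_x/\tv)_x$ yields, as its leading contribution, the dissipation $\int v^{-1}|(u-\tu)_{xx}|^2\,dx$, which by the uniform bound \eqref{tempo2} on $v$ dominates $D_2$, together with a collection of lower-order cross products.

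Next I would dispatch each bad term by Young's inequality, distributing a small fraction to $D_2$ and the rest to quantities already controlled by \eqref{temp 0}. Schematically: \textbf{(i)} the pressure flux obeys $|\int(p(v)-p(\tv))_x(u-\tu)_{xx}\,dx|\le \tfrac{1}{8}D_2+CD$; \textbf{(ii)} the shift contributions $\dot X_i\int(\tu_i)^{X_i}_x(u-\tu)_{xx}\,dx$ are bounded by $\tfrac{1}{8}D_2+C\sum_i\delta_i|\dot X_i|^2$ using $\|(\tu_i)^{X_i}_x\|_{L^2}^2\lesssim \delta_i$; \textbf{(iii)} the nonlinear remainders from $(u_x/v-\tu_x/\tv)_x$ generate cross products such as $|\tv_x||v-\tv||(u-\tu)_{xx}|$ and $|(u-\tu)_x||\tv_x||(u-\tu)_{xx}|$, each of which is split into a small multiple of $D_2$ plus a factor absorbed into $\mathcal G^S+D+D_1$ after inserting the partition $\phi_1+\phi_2=1$ and invoking Lemma~\ref{lem:shock-interact-2} for the off-diagonal pieces; \textbf{(iv)} the interaction errors $E_2,E_3$ are handled exactly as $\mathcal S_1,\mathcal S_2$ in Section~\ref{sec:4}, producing an integrable remainder of size $C\delta_1\delta_2 e^{-C\min(\delta_1,\delta_2)t}$ whose time-integral is absorbed into the $C\delta_0^{1/2}$ correction.

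Collecting everything gives a differential inequality of the form
\[
\tfrac{d}{dt}\|(u-\tu)_x\|_{L^2}^2 + \tfrac{1}{2}D_2 \le C\Big(\mathcal G^S+D+D_1+\sum_{i=1}^2\delta_i|\dot X_i|^2\Big)+C\delta_1\delta_2 e^{-C\min(\delta_1,\delta_2)t},
\]
which, integrated in time, multiplied by a sufficiently small constant and added to \eqref{temp 0} (whose right-hand side already controls the time-integrals of $\mathcal G^S$, $D$, $D_1$ and $\sum \delta_i|\dot X_i|^2$ by the initial data plus $C\delta_0^{1/2}$), closes the $H^1$-bound for $u-\tu$ and produces the $D_2$-dissipation; the $\|(v-\tv)_x\|_{L^2}$ part is recovered through \eqref{temp 3}. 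The hard part will be disposing of the nonlinear cross products arising from $(u_x/v-\tu_x/\tv)_x$, especially terms like $\tv_x(v-\tv)(u-\tu)_{xx}/v^2$ and $\tv_x\tu_x(v-\tv)(u-\tu)_{xx}/v^3$ which pair a low-order perturbation with the top-order derivative and are amplified by a shock profile: the delicate point is to split each such term by Young's inequality so that the coefficient on $D_2$ stays absolutely small, while the residual factor, after localization by $\phi_i$, is absorbed by $\mathcal G^S$ plus an exponentially-decaying off-diagonal remainder controlled by Lemma~\ref{lem:shock-interact-2}.
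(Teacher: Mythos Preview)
Your proposal is correct and follows essentially the same route as the paper: multiply the $\psi:=u-\tu$ equation by $-\psi_{xx}$, extract the dissipation $\mathbf D_2=\int v^{-1}|\psi_{xx}|^2$, split all cross terms by Young's inequality into small multiples of $\mathbf D_2$ plus pieces absorbed by $\mathcal G^S$, $D$, $D_1$, and $\sum_i\delta_i|\dot X_i|^2$ (with localization via $\phi_1+\phi_2=1$ and Lemma~\ref{lem:shock-interact-2} for the off-diagonal remainders), then multiply by a small constant and add to \eqref{temp 0}.

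One remark: among the cross products coming from $(u_x/v-\tu_x/\tv)_x$ you omit the term $\int v^{-2}(v-\tv)_x\,\psi_x\,\psi_{xx}\,dx$ (arising from $v_x=(v-\tv)_x+\tv_x$). The paper handles this one not by localization but by the interpolation $\|\psi_x\|_{L^\infty}\le C\|\psi_x\|_{L^2}^{1/2}\|\psi_{xx}\|_{L^2}^{1/2}$ together with $\|(v-\tv)_x\|_{L^2}\le\eps_1$, yielding a bound $C\eps_1(\|\psi_x\|_{L^2}^2+\|\psi_{xx}\|_{L^2}^2)$. Your ``hard part'' discussion focuses on the $\tv_x(v-\tv)\psi_{xx}$ terms, which are in fact the easier ones; the $(v-\tv)_x\psi_x\psi_{xx}$ term is the one that genuinely needs an extra interpolation step, though it is standard.
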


\begin{proof}
Considering \eqref{temp 0}, we only need to estimate $\|\pa_x(u-\tu)\|_{L^2(\R)}$. For notational simplicity, we define $\psi:=u-\tu$. Then, $\psi$ satisfies
\begin{equation}\label{eq:psi}
	\psi_t-\sum_{i=1}^2 \dot{X}_i (\tu_i)^{X_i}_x+(p(v)-p(\tv))_x=\left( \frac{u_x}{v}-\frac{\tu_x}{\tv}\right)_x-E_2-E_3.
\end{equation}
We multiply \eqref{eq:psi} by $-\psi_{xx}$ and integrate over $\bbr$ to obtain
\[
\begin{aligned}
\frac{d}{dt} \int_\R \frac{|\psi_x|^2}{2}\,dx
&=-\sum_{i=1}^2 \dot{X}_i \int_\R (\tu_i)^{X_i}_x \psi_{xx} \, dx+\int_\R (p(v)-p(\tv))_x \psi_{xx} \, dx\\
&\quad -\int_\R \left( \frac{u_x}{v}-\frac{\tu_x}{\tv}\right)_x \psi_{xx} \,dx+\int_\R (E_2+E_3) \psi_{xx} \,dx\\
&=:J_1+J_2+J_3+J_4.
\end{aligned}
\]

\noindent $\bullet$ (Estimate of $J_1$ and $J_2$): We first define the good term
\[\mathbf{D}_2:=\int_\R \frac{1}{v} |\psi_{xx}|^2 \, dx,\]
and split $J_1$ as $J_{11}+J_{12}$ where
\[J_{1i}:=-\dot{X}_i\int_{\R}(\tu_i)_x^{X_i}\psi_{xx}\,d x.\]
Then, using Holder inequality and Lemma \ref{lem:shock-est},  and then Young's inequality,  we obtain
\[\begin{aligned}
|J_{1i}| &\leq  |\dot{X}_i| \sqrt{\int_\R |(\tu_i)^{X_i}_x |^2 \,dx}\sqrt{\int_\R |\psi_{xx}|^2 \,dx} \\
&\leq |\dot{X}_i| \delta_i^{3/2} \sqrt{ \int_\R |\psi_{xx}|^2 \,dx} \leq \frac{\delta_i}{2} | \dot{X}_i|^2+C \delta_i^2 \mathbf{D}_2 \leq \frac{\delta_i}{2}|\dot{X}_i|^2+\frac{1}{16}\mathbf{D}_2,
\end{aligned}\]
which yields
\[|J_1|\le\sum_{i=1}^2\frac{\delta_i}{2}|X_i|^2 + \frac{1}{8}\mathbf{D}_2.\]
Similarly,  using Young's inequality,  we have
\[|J_2| \leq \frac{1}{8} \mathbf{D}_2+C D.\]

\noindent $\bullet$ (Estimate of $J_3$): We estimate $J_3$ as
\[
\begin{aligned}
J_3&:=-\int_\R \frac{1}{v} |\psi_{xx}|^2 \,dx-\int_\R \left( \frac{1}{v} \right)_x \psi_x \psi_{xx} \,dx\\
&\quad -\int_\R \tu_{xx} \left( \frac{1}{v}-\frac{1}{\tv} \right)\psi_{xx} \,dx-\int_\R \tu_x \left( \frac{1}{v}-\frac{1}{\tv} \right)_x \psi_{xx} \,  dx\\
&=:-\mathbf{D}_2+J_{31}+J_{32}+J_{33},
 \end{aligned}
\]
from which we derive the good term $\mathbf{D}_2$. On the other hand, we use $(\frac{1}{v})_x\leq C |v_x| \leq C(|(v-\tv)_x|+|\tv_x|)$ and the interpolation inequality to derive
\[
\begin{aligned}
|J_{31}| &\leq \norm{(v-\tv)_x}_{L^2}\norm{\psi_x}_{L^\infty}\norm{\psi_{xx}}_{L^2}+\norm{\tv_x}_{L^\infty}\norm{\psi_x}_{L^2}\norm{\psi_{xx}}_{L^2}\\
&\leq C \varepsilon_1 \norm{\psi_x}_{L^2}^{1/2}\norm{\psi_{xx}}_{L^2}^{1/2}\norm{\psi_{xx}}_{L^2}+C(\delta_1+\delta_2)\norm{\psi_x}_{L^2}\norm{\psi_{xx}}_{L^2}\\
&\leq C(\varepsilon_1+\delta_1+\delta_2)(\norm{\psi_x}_{L^2}^2+\norm{\psi_{xx}}_{L^2}^2)\leq\frac{1}{8}\mathbf{D}_2+C(\varepsilon_1+\delta_1+\delta_2)D_1.
\end{aligned}
\]
Moreover, using $|(\tu_i)^{X_i}_{xx}|  \leq C |(\tu_i)^{X_i}_x|$ in Lemma \ref{lem:shock-est}, relation $\phi_1+\phi_2=1$,  and then the same estimate as before,  we have
\[
\begin{aligned}
|J_{32}|&\leq C \int_\R (|(\tu_1)^{X_1}_x|+|(\tu_2)^{X_2}_x|)|v-\tv||\psi_{xx}| \,dx\\
&\leq \frac{1}{8}\mathbf{D}_2+C\delta_1 \mathcal{G}^S+C \delta_2 \mathcal{G}^S\\
&\quad +C\delta_1\int_\R |(\tv_1)^{X_1}_x| \phi_2 |p(v)-p(\tv)|^2 \,dx+C \delta_2 \int_\R |(\tv_2)^{X_2}_x| \phi_1|p(v)-p(\tv)|^2 \,dx\\
&\leq \frac{1}{8}\mathbf{D}_2+C\delta_1 \mathcal{G}^S+C\delta_2 \mathcal{G}^S+C \varepsilon_1^2 \sum_{i=1}^2  \delta_i^{5/2} \exp(-C \delta_i t).
\end{aligned}
\]
Finally, we estimate $J_{33}$ as
\[
\begin{aligned}
|J_{33}| &\leq C \int_\R (|(\tu_1)^{X_1}_x|+|(\tu_2)^{X_2}_x|)(|v-\tv|+|(v-\tv)_x|) |\psi_{xx}| \,dx\\
&\leq \frac{1}{8}\mathbf{D}_2+C \int_\R \left( |(\tv_1)^{X_1}_x|^2+|(\tv_2)^{X_2}_x|^2 \right) \left( |v-\tv|^2+|(v-\tv)_x|^2\right) dx\\
&\leq \frac{1}{8}\mathbf{D}_2+C(\delta_1+\delta_2)\left(D+\mathcal{G}^S+\varepsilon_1^2 \sum_{i=1}^2  \delta_i^{5/2} \exp(-C \delta_i t) \right).
\end{aligned}\]
\noindent $\bullet$ (Estimate of $J_4$): We control $J_4$ as
\[
\begin{aligned}
|J_4| &\leq C \norm{\psi_{xx}}_{L^2}\norm{ (|(\tv_1)^{X_1}_{xx}|+|(\tv_1)^{X_1}_x|^2)|\tv-\tv_1^{X_1}|+(|(\tv_2)^{X_2}_{xx}|+|(\tv_2)^{X_2}_x|^2 )|\tv-\tv_2^{X_2}| +|(\tv_1)^{X_1}_x| |(\tv_2)^{X_2}_x|}_{L^2}\\
&\leq \frac{1}{8}\mathbf{D}_2+C\norm{|(\tv_1)^{X_1}_x||\tv-\tv_1^{X_1}|+|(\tv_2)^{X_2}_x||\tv-\tv_2^{X_2}|+|(\tv_1)^{X_1}_x||(\tv_2)^{X_2}_x|}_{L^2}^2.
\end{aligned}\]
Therefore, combining the estimates for $J_i$, there exists a positive constant $c_2>0$ such that
\begin{align*}
\frac{d}{dt} \int_\R \frac{|\psi_x|^2}{2} \,dx &\leq -\frac{1}{4} \mathbf{D}_2+\sum_{i=1}^2 \frac{\delta_i}{2}|\dot{X}_i|^2+c_2 D+C(\varepsilon_1+\delta_1+\delta_2)(\mathcal{G}^S+D_1)+C \varepsilon_1^2  \sum_{i=1}^2  \delta_i^{5/2} \exp(-C \delta_i t)\\
&\quad +C\norm{|(\tv_1)^{X_1}_x||\tv-\tv_1^{X_1}|+|(\tv_2)^{X_2}_x||\tv-\tv_2^{X_2}|+|(\tv_1)^{X_1}_x||(\tv_2)^{X_2}_x|}_{L^2}^2.
\end{align*}
Integrating the above estimate over $[0,t]$ for any $t \leq T$, and using \eqref{L2-est}, we obtain
\begin{align*}
\int_\R \frac{|(u-\tu)_x|^2}{2} \,dx &\leq \int_\R \frac{|(u_0-\tu(0,x))_x|^2}{2} \,dx+\int_0^t \left(-\frac{1}{4} \mathbf{D}_2+\sum_{i=1}^2 |\dot{X}_i|^2 \right) ds\\
&\quad +\int_0^t \left( c_2 D+C(\varepsilon_1+\delta_1+\delta_2)(\mathcal{G}^S+D_1) \right) ds+C\varepsilon_1 \max (\delta_1,\delta_2)^{1/2}.
\end{align*}
Multiplying the above inequality by the constant $\frac{1}{2 \max(1,c_2)}$ and then adding the result \eqref{temp 0},  together with the smallness of $\varepsilon_1,\delta_1,\delta_2$,  we have
\begin{align*}
&\norm{v-\tv}_{H^1(\R)}^2+\norm{u-\tu}_{H^1(\R)}^2+\int_0^t \left( \sum_{i=1}^2 \delta_i |\dot{X}_i|^2+ \mathcal{G}^S+D+\mathcal{D}_1+\mathbf{D}_2 \right)ds\\
&\leq C \left( \norm{v_0-\tv(0,\cdot)}_{H^1(\R)}^2+\norm{u_0-\tu(0,\cdot)}_{H^1(\R)}^2 \right)+C  \delta_0^{1/2},
\end{align*}
which is the desired estimate \eqref{C-3}. This completes the proof of Proposition \ref{prop:H1-estimate}.
\end{proof}

\begin{appendix}
\setcounter{equation}{0}
\section{Proof of Lemma \ref{lem:shock-interact-1}}  \label{app-1}
In this appendix, we present the detailed proof of Lemma \ref{lem:shock-interact-1}. We only consider the case of $i=1$, since the other case can be shown in the same manner. It follows from \eqref{shock-est} that 
\begin{equation}\label{eqA-1}
|(\tv_1)^{X_1}_x| = |\tv'_1(x-\sigma_1t-X_1(t))|\le C\delta_1^2\exp(-C\delta_1|x-\sigma_1t-X_1(t)|),\quad \forall x\in \bbr,\quad t>0
\end{equation}
and
\begin{equation}\label{eqA-2}
|\tv^{X_1,X_2}-\tv_1^{X_1}|=|\tv_2^{X_2}-v_m|\le \begin{cases}
C\delta_2\exp(-C\delta_2|x-\sigma_2t-X_2(t)|),\quad&\mbox{if}\quad x\le \sigma_2t+X_2(t),\\
C\delta_2,\quad &\mbox{if}\quad x\ge \sigma_2t+X_2(t).
\end{cases}
\end{equation}
Recall that \eqref{sepx12} implies $X_2(t)+\sigma_2t\ge\frac{\sigma_2}{2}t>0$. Therefore, we combine \eqref{eqA-1} and \eqref{eqA-2} to derive
\begin{equation}\label{eqA-3}
|(\tv_1)_x^{X_1}||\tv^{X_1,X_2}-\tv_1^{X_1}|\le \begin{cases}
C\delta_1^2\delta_2\exp(-C\delta_2|x-\sigma_2t-X_2(t)|)\quad&\mbox{if}\quad x\le0,\\
C\delta_1^2\delta_2\exp(-C\delta_1|x-\sigma_1t-X_1(t)|),\quad&\mbox{if}\quad x\ge0,
\end{cases}
\end{equation}
and
\begin{equation}\label{eqA-4}
|(\tv_1)_x^{X_1}|^{1/2}|\tv^{X_1,X_2}-\tv_1^{X_1}|\le \begin{cases}
C\delta_1\delta_2\exp(-C\delta_2|x-\sigma_2t-X_2(t)|)\quad&\mbox{if}\quad x\le0,\\
C\delta_1\delta_2\exp(-C\delta_1|x-\sigma_1t-X_1(t)|),\quad&\mbox{if}\quad x\ge0.
\end{cases}
\end{equation}
Again, we note that \eqref{sepx12} implies
\begin{align}
\begin{aligned}\label{eqA-5}
&x-\sigma_2t-X_2(t)\le x-\frac{\sigma_2}{2}t\le-\frac{\sigma_2}{2}t<0,\quad \mbox{if}\quad x\le0,\\
&x-\sigma_1t-X_1(t)\ge x-\frac{\sigma_1}{2}t\ge-\frac{\sigma_1}{2}t>0,\quad \mbox{if}\quad x\ge0,
\end{aligned}
\end{align}
which combined with \eqref{eqA-3} yields the first desired estimate:
\[|(\tv_1)_x^{X_1}||\tv^{X_1,X_2}-\tv_1^{X_1}|\le
C\delta_1^2\delta_2\exp(-C\min\{\delta_1,\delta_2\}t)\quad\mbox{if}\quad\forall x\in\bbr,\quad t>0.
\]
Similarly, we combine \eqref{eqA-4} and \eqref{eqA-5} to obtain
\[|(\tv_1)_x^{X_1}|^{1/2}|\tv^{X_1,X_2}-\tv_1^{X_1}|\le
C\delta_1\delta_2\exp(-C\min\{\delta_1,\delta_2\}t)\quad\mbox{if}\quad\forall x\in\bbr,\quad t>0.
\]
On the other hand, since \eqref{shock-est} implies
\begin{equation}\label{eqA-6}
\int_{\bbr} |(\tv_1)_x^{X_1}|^{1/2}\,d x\le C,
\end{equation}
we obtain the second estimate. Finally, we again use \eqref{shock-est} and \eqref{eqA-5} to find
\[|(\tv_1)_x^{X_1}|^{1/2}|(\tv_2)_x^{X_2}|\le C\delta_1\delta_2^2\exp(-C\min\{\delta_1,\delta_2\}t),\quad \forall x\in\bbr,\quad t>0.\]
We combine the above estimate with \eqref{eqA-6} to derive the last desired estimate.

\section{Proof of Lemma \ref{lem:shock-interact-2}}  \label{app-2}
\setcounter{equation}{0}
In this appendix, we provide the proof of Lemma \ref{lem:shock-interact-2}. Again, we only focus on the case of $\tv_1$. It follows from \eqref{eqA-1} that
\begin{equation}\label{eqB-1}
\phi_2|(\tv_1)_x^{X_1}|^{1/2}\le C\phi_2\delta_1 \exp(-C\delta_1|x-\sigma_1t-X_1(t)|),\quad\forall x\in\bbr,\quad t>0,
\end{equation}
and
\begin{equation}\label{eqB-2}
\phi_2|(\tv_1)_x^{X_1}|\le C\phi_2\delta^2_1 \exp(-C\delta_1|x-\sigma_1t-X_1(t)|),\quad\forall x\in\bbr,\quad t>0.
\end{equation}
We note that the support of $\phi_2(t,\cdot)$ is $\{x~:~x\ge (X_1(t)+\sigma_1t)/2\}$, on which the following estimate holds:
\[x-(\sigma_1t+X_1(t))\ge -\frac{X_1(t)+\sigma_1t}{2}\ge -\frac{\sigma_1t}{4}>0.\]
Since $0\le\phi_2\le 1$, we use \eqref{eqB-2} to derive
\[\phi_2|(\tv_1)_x^{X_1}|\le C\delta^2_1\exp(-C\delta_1t),\]
which yields the desired first estimate. Similarly, we use \eqref{eqB-1} to obtain
\[\phi_2|(\tv_1)_x^{X_1}|^{1/2}\le C\delta_1\exp(-C\delta_1t).\]
Combining the above estimate with \eqref{eqA-6}, we derive the second estimate. This complete the proof of Lemma \ref{lem:shock-interact-2}.
\end{appendix}
\bibliographystyle{amsplain}
\bibliography{reference}

\end{document}